\numberwithin{equation}{section}
\newtheorem{thm}{Theorem}[section]
\newtheorem{defn}[thm]{Definition}
\newtheorem{lem}[thm]{Lemma}
\newtheorem{prop}[thm]{Proposition}
\newtheorem{cor}[thm]{Corollary}
\newtheorem{remark}[thm]{Remark}
\begin{document}

\title[]{Stability of Riemannian manifolds with Killing spinors}

\author{Changliang Wang}
\address{Department of Mathematics and Statistics, McMaster University, Hamilton, Ontario, Canada}
\email{wangc114@math.mcmaster.ca}
\date{}

\maketitle

\begin{abstract}
Riemannian manifolds with non-zero Killing spinors are Einstein manifolds. Klaus Kr\"{o}ncke proved that all complete Riemannian manifolds with imaginary Killing spinors are (linearly) strictly stable in \cite{Kro15}. In this paper, we obtain a new proof for this stability result by using a Bochner type formula in \cite{DWW05} and \cite{Wan91}. Moreover, existence of real Killing spinors is closely related to the Sasaki-Einstein structure. A regular Sasaki-Einstein manifold is essentially the total space of a certain principal $S^{1}$-bundle over a K\"{a}hler-Einstein manifold. We prove that if the base space is a product of two K\"{a}hler-Einstein manifolds then the regular Sasaki-Einstein manifold is unstable. This provides us many new examples of unstable manifolds with real Killing spinors.
\end{abstract}

\section{Introduction}
\noindent Einstein metrics naturally come out of some variational problems. For example Einstein metrics on a compact manifold $M$ are the critical points of the total scalar curvature functional with the fixed volume 1. Then the stability problem naturally comes up when we consider the second variation of the total scalar curvature functional with the fixed volume 1 at an Einstein metric $g$. The second variation formula is given by $-\frac{1}{2}\langle\nabla^{*}\nabla h-2\mathring{R}h, h\rangle_{L^{2}(M)}$, when restricted in traceless transverse direction, i.e. $h\in C^{\infty}(M, S^{2}(M))$ satisfying $tr_{g}h=0$ and $\delta_{g}h=0$, where $S^{2}(M)$ is the bundle of symmetric 2-tensors, $(\mathring{R}h)_{ij}=R_{ikjl}h^{kl}$, and $\delta_{g}h$ is the divergence of $h$. An Einstein manifold $(M^{n}, g)$ is said to be stable if $\langle \nabla^{*}\nabla h-2\mathring{R}h, h\rangle_{L^{2}(M)}\geq0$ for all traceless transverse symmetric 2-tensors $h$, and otherwise, $(M^{n}, g)$ is unstable. $(M^{n}, g)$ is said to be strictly stable if $\langle \nabla^{*}\nabla h-2\mathring{R}h, h\rangle_{L^{2}(M)}\geq c\langle h,h\rangle_{L^{2}(M)}$ for some constant $c>0$. The operator $\nabla^{*}\nabla-2\mathring{R}$ acting on symmetric 2-tensors in $C^{\infty}(M, S^{2}(M))$ is called the Einstein operator. If the manifold is non-compact, we only consider compactly supported symmetric 2-tensors $h$. This stability problem has been extensively studied, see e.g. \cite{Koi78}, \cite{Koi79}, \cite{Koi80}, \cite{DWW05}, \cite{DWW07}, \cite{Kro15}, and also see the book \cite{Bes87} for an introduction of this stability problem and some discussions of many interesting results on this problem.

The stability problem of Einstein metrics was also similarly studied with respect to variation formulae of Perelman's $\nu$-entropy (see, e.g. \cite{Per02} and \cite{CZ12}) for Einstein metrics with positive Ricci curvature, and also variation formulae of $\nu_{+}$-entropy (see, e.g. \cite{FIN05} and \cite{Zhu11}) for Einstein metrics with negative Ricci curvature. For example, H-D. Cao and C. He studied stability of Einstein metrics with respect to $\nu$-entropy on symmetric spaces of compact type in \cite{CH13}.

In this paper, we will study the stability of complete Riemannian manifolds with non-zero Killing spinors, which then are Einstein manifolds. These manifolds are important in both mathematics and physics. Th. Friedrich initiated the mathematical investigation of Killing spinors in \cite{Fri80}. And then complete Riemannian manifolds with Killing spinors were classified in \cite{Bar93}, \cite{Bau89_1}, \cite{Bau89}, \cite{FK89}, and \cite{FK90} (also see the book \cite{BFGK91}). Riemannian manifolds with real and imaginary Killing spinors have several very distinct properties. For example, Riemannian manifolds with non-zero real Killing spinors are compact. On the other hand, Riemannian manifolds with non-zero imaginary Killing spinors are non-compact (see \cite{CGLS86} and \cite{Bau89}). So we study the stability of these two kinds of manifolds separately.

If we allow a Killing constant to be zero, then parallel spinors can be viewed as Killing spinors with zero Killing constant. And in particular, Riemannian manifolds with non-zero parallel spinors are Ricci-flat, i.e. Ricci curvature is zero. X. Dai, X. Wang, and G. Wei proved that manifolds with non-zero parallel spinors are stable in \cite{DWW05} by deriving a Bochner type formula, and rediscovery a result in \cite{Wan91}.

Then it is very natural to ask whether we can estimate the Einstein operator on manifolds with Killing spinors and further conclude some stability results for these manifolds by extending X. Dai, X. Wang and G. Wei's Bochner type argument to Killing spinor case because Killing spinors give us a similar Bochner type formula as parallel spinors. We will answer this question for imaginary and real Killing spinors separately.

Recall in \cite{Bau89} an imaginary Killing spinor $\sigma$ is called to be of type \uppercase\expandafter{\romannumeral1} if there exists a vector field $X$ such that $X\cdot\sigma=\sqrt{-1}\sigma$, where $`` \cdot "$ denotes the Clifford multiplication, and otherwise, $\sigma$ is of type \uppercase\expandafter{\romannumeral2}. As mentioned in \cite{Kro15}, non-constant length functions of imaginary Killing spinors will cause some issues for extending the Bochner type argument to the imaginary Killing spinor case. Here, we overcome this difficulty and obtain the following estimate for Einstein operator on complete Riemannian manifolds with imaginary Killing spinors of type \uppercase\expandafter{\romannumeral1} by using a Bochner type formula in \cite{DWW05} and \cite{Wan91}. Later on, we will see that type \uppercase\expandafter{\romannumeral1} imaginary Killing spinors are the only interesting ones for us, since the stability of complete manifolds with type \uppercase\expandafter{\romannumeral2} imaginary Killing spinors has been fully understood.

\begin{thm}\label{EinsteinOperatorEstimateType1}
Let $(M^{n},g)$ be a complete Riemannian manifold with a non-zero imaginary Killing spinor of type \uppercase\expandafter{\romannumeral1} with the imaginary Killing constant $\mu$. We have
\begin{equation}
\int_{M}\langle\nabla^{*}\nabla h-2\mathring{R}h, h\rangle dvol_{g}\geq-[n(n-2)-4]\mu^{2}\int_{M}\langle h,h\rangle dvol_{g}.
\end{equation}
for all compactly supported traceless transverse symmetric 2-tensor $h$.
\end{thm}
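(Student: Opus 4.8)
\medskip
\noindent\textit{Sketch of a proof.}
\smallskip

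The plan is to replace the Killing spinor, whose length is non-constant (the difficulty noted by Kr\"oncke), by a spinor of constant length, re-run the Bochner-type computation of \cite{DWW05} and \cite{Wan91} with it, and track the new terms. Since a Killing spinor is parallel for a modified connection $\nabla^{\mu}_{X}\sigma:=\nabla_{X}\sigma-\mu\sqrt{-1}\,X\cdot\sigma$, it vanishes nowhere, so $\bar\sigma:=\sigma/|\sigma|$ is a smooth spinor of unit length. The type~\uppercase\expandafter{\romannumeral1} hypothesis provides a unit vector field $V$ with $V\cdot\bar\sigma=\sqrt{-1}\,\bar\sigma$, and one checks $\langle Y\cdot\bar\sigma,\bar\sigma\rangle=\sqrt{-1}\,g(V,Y)$ for every $Y$ (immediate for $Y$ a multiple of $V$; for $Y\perp V$ the left side is simultaneously real and purely imaginary, hence vanishes). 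Differentiating $|\sigma|^{2}$ and using this identity yields the \emph{degenerate Killing equation}
\begin{equation*}
\nabla_{X}\bar\sigma=\mu\sqrt{-1}\,X^{\perp}\cdot\bar\sigma,\qquad X^{\perp}:=X-g(X,V)V,
\end{equation*}
so $\bar\sigma$ is parallel along $V$ and Killing-like transverse to $V$; this is the device that removes the non-constant length function. (For type~\uppercase\expandafter{\romannumeral2} spinors no such $V$ exists, hence the restriction.) Recall in addition that $(M^{n},g)$ is Einstein with $\mathrm{Ric}_{g}=-4(n-1)\mu^{2}g$ and $R_{g}=-4n(n-1)\mu^{2}$.

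To a compactly supported traceless transverse $h=\sum h_{ij}e^{i}\otimes e^{j}$ we assign, as in \cite{DWW05} and \cite{Wan91}, the compactly supported spinor-valued one-form $\psi_{h}:=\sum_{i}\big(h(e_{i})\cdot\bar\sigma\big)\otimes e^{i}\in\Gamma(\Sigma M\otimes T^{*}M)$, with $h(e_{i})=\sum_{j}h_{ij}e_{j}$. Let $\mathcal D$ be the twisted Dirac operator on $\Sigma M\otimes T^{*}M$. For a \emph{parallel} spinor on a Ricci-flat manifold, the Bochner/Weitzenb\"ock identity of \cite{DWW05} identifies $\int_{M}\langle\nabla^{*}\nabla h-2\mathring{R}h,h\rangle\,dvol_{g}$ with $\int_{M}|\mathcal D\psi_{h}|^{2}\,dvol_{g}\geq0$, the Clifford reductions there using only that the spinor has constant length and that $\mathrm{tr}_{g}h=\delta_{g}h=0$. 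Running the same Weitzenb\"ock computation with $\bar\sigma$, one writes $\mathcal D^{*}\mathcal D=\nabla^{*}\nabla+\tfrac{R_{g}}{4}+\mathcal R$ and pairs with $\psi_{h}$. This produces (i) the term $\langle\nabla^{*}\nabla h-2\mathring{R}h,h\rangle$ as before; (ii) the scalar- and Ricci-curvature contributions $\tfrac{R_{g}}{4}|\psi_{h}|^{2}$ and the Ricci part of $\langle\mathcal R\psi_{h},\psi_{h}\rangle$, each an explicit multiple of $\mu^{2}|h|^{2}$; and (iii), from $\nabla_{X}\bar\sigma=\mu\sqrt{-1}X^{\perp}\cdot\bar\sigma$, a quadratic term that a direct Clifford computation (using $\sum_{k}e_{k}\cdot Y\cdot e_{k}=(n-2)Y\cdot$ and $V\cdot Y\cdot V\cdot=(Y-2g(Y,V)V)\cdot$) evaluates as $\mu^{2}\int_{M}\big((n-3)^{2}|h|^{2}+4(n-2)|h(V)|^{2}\big)$, together with $\mu$-linear terms of $\nabla h\cdot h$ type.

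The $\mu$-linear terms cancel at leading order: after one integration by parts (legitimate, $\psi_{h}$ is compactly supported) the surviving integrands carry the factor $g(V,X^{\perp})=0$, leaving only an $O(\mu^{2})\|h\|_{L^{2}}^{2}$ remainder. Collecting everything, $\int_{M}|\mathcal D\psi_{h}|^{2}\,dvol_{g}=\int_{M}\langle\nabla^{*}\nabla h-2\mathring{R}h,h\rangle\,dvol_{g}+\mu^{2}\!\int_{M}q(h)\,dvol_{g}$, where $q(h)$ is a pointwise quadratic form in $h$ (built from the curvature terms in (ii), the term in (iii), and the $\mu$-linear remainders). Since the left-hand side is $\geq0$,
\begin{equation*}
\int_{M}\langle\nabla^{*}\nabla h-2\mathring{R}h,h\rangle\,dvol_{g}\ \geq\ -\mu^{2}\int_{M}q(h)\,dvol_{g},
\end{equation*}
and bounding the density $q(h)$ from above by $(n(n-2)-4)|h|^{2}$ --- using $|V|=1$ together with $|h(V)|^{2}\leq|h|^{2}$ (or the sharper $|h(V)|^{2}\leq\tfrac{n-1}{n}|h|^{2}$ for traceless $h$) --- gives the asserted inequality.

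The main obstacle is this last step: verifying that the $\mu^{2}$-contributions assemble into a form whose density does not exceed the \emph{sharp} constant $n(n-2)-4$ times $|h|^{2}$. This needs the exact coefficients in the Weitzenb\"ock pairing $\langle\mathcal R\psi_{h},\psi_{h}\rangle=-2\langle\mathring{R}h,h\rangle+(\text{explicit Ricci term})$, the confirmation that every $\mu$-linear error integrates by parts to a genuine $O(\mu^{2})$ term with no loss into $\langle\nabla^{*}\nabla h,h\rangle$, and the reduction of each higher Clifford word $\langle e_{i_{1}}\!\cdots e_{i_{r}}\bar\sigma,\bar\sigma\rangle$ occurring along the way to an expression in $g$ and $V$ via $\langle Y\cdot\bar\sigma,\bar\sigma\rangle=\sqrt{-1}\,g(V,Y)$, with $\mathrm{tr}_{g}h=\delta_{g}h=0$ annihilating the terms that do not so reduce. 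No further analytic input is required, since $h$, and hence $\psi_{h}$, is compactly supported and $\bar\sigma$ is smooth of unit length.
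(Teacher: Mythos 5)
Your strategy is genuinely different from the paper's: you normalize the spinor to unit length and derive a degenerate Killing equation $\nabla_{X}\bar\sigma=\mu\sqrt{-1}\,X^{\perp}\cdot\bar\sigma$ intrinsically from the type I condition, whereas the paper keeps the unnormalized spinor, weights the inner product by $1/f$ with $f=e^{-2\nu t}$ read off from Baum's warped-product classification, and absorbs the resulting cross terms by two applications of the Cauchy inequality. Your preliminary identities do check out ($V$ is a unit field, $\langle Y\cdot\bar\sigma,\bar\sigma\rangle=\sqrt{-1}\,g(V,Y)$, and the degenerate Killing equation), and they are precisely equivalent to the facts $f=e^{-2\nu t}$, $\nabla f=-2\nu f\,\frac{\partial}{\partial t}$, $\frac{\partial}{\partial t}\cdot\sigma=\sqrt{-1}\,\sigma$ that the paper extracts from the classification; so up to this point the two routes carry the same information in different packaging.

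The decisive step, however, is missing, and as sketched it points the wrong way. Writing $\mu=\sqrt{-1}\nu$, the theorem's conclusion is a \emph{positive} lower bound $[n(n-2)-4]\nu^{2}\|h\|_{L^{2}}^{2}$ (for $n\geq4$). From $\int_{M}|\mathcal{D}\psi_{h}|^{2}\geq0$ and your identity $\int_{M}|\mathcal{D}\psi_{h}|^{2}=\int_{M}\langle(\nabla^{*}\nabla-2\mathring{R})h,h\rangle+\mu^{2}\int_{M}q(h)$, you therefore need a \emph{lower} bound $q(h)\geq[n(n-2)-4]\,|h|^{2}$ (equivalently, in your real-$\mu$ convention, an upper bound by a \emph{negative} multiple of $|h|^{2}$). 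Bounding $q$ from above by $[n(n-2)-4]\,|h|^{2}$ via $|h(V)|^{2}\leq|h|^{2}$ yields only the trivial direction $\int_{M}\langle(\nabla^{*}\nabla-2\mathring{R})h,h\rangle\geq-[n(n-2)-4]\nu^{2}\|h\|_{L^{2}}^{2}$, which does not give strict stability. Moreover, the one piece of $q$ you compute explicitly, $(n-3)^{2}|h|^{2}+4(n-2)|h(V)|^{2}$, drops to $(n-3)^{2}|h|^{2}<[n(n-2)-4]\,|h|^{2}$ wherever $h(V)=0$, so the needed lower bound cannot come from that term; it must come from the $\mu$-linear cross terms, exactly the ones you assert ``cancel at leading order'' after integration by parts. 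That assertion is the entire difficulty and is unproved: in the paper these cross terms do \emph{not} reduce to an exact $O(\mu^{2})\|h\|^{2}$ identity but are absorbed by Cauchy--Schwarz against $\|D\Phi(h)\|^{2}$, each absorption costing exactly $2\nu^{2}\|h\|_{L^{2}}^{2}$ --- which is precisely where the $-4$ in $n(n-2)-4$ originates. Until you carry out the Clifford reduction of those cross terms and verify that the resulting quadratic form has the required sign and size, the argument does not close.
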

\begin{cor}\label{ImaginaryStability}
Complete Riemannian manifolds with non-zero imaginary Killing spinors are strictly stable.
\end{cor}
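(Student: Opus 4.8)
The plan is to split according to the type of the imaginary Killing spinor and to use Theorem \ref{EinsteinOperatorEstimateType1} for the type I case. Recall from \cite{Bau89} that on a complete manifold a non-zero imaginary Killing spinor is either of type I or of type II. The type II case is the one the introduction describes as completely understood: by the classification of such manifolds together with the classical strict stability of hyperbolic space, or directly by \cite{Kro15}, a complete manifold carrying a type II imaginary Killing spinor is strictly stable, so nothing new is needed there. Hence it suffices to prove strict stability when the spinor is of type I.

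For the type I case, Theorem \ref{EinsteinOperatorEstimateType1} gives, for every compactly supported traceless transverse symmetric $2$-tensor $h$,
\[
\int_{M}\langle\nabla^{*}\nabla h-2\mathring{R}h,h\rangle\,dvol_{g}\ \geq\ \bigl(4-n(n-2)\bigr)\mu^{2}\int_{M}\langle h,h\rangle\,dvol_{g}.
\]
When $n=2$ or $n=3$ the constant $\bigl(4-n(n-2)\bigr)\mu^{2}$ equals $4\mu^{2}$ or $\mu^{2}$, which is strictly positive since $\mu\neq 0$ for a genuine imaginary Killing spinor, and $(M^{n},g)$ is strictly stable at once. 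For $n\ge 4$ this constant is $\le 0$, so Theorem \ref{EinsteinOperatorEstimateType1} alone does not suffice and one must use the extra rigidity of type I geometry. Here one uses that, by the classification in \cite{Bau89}, a complete type I manifold is a warped product $\mathbb{R}\times_{f}F$ with $f$ exponential and $(F,g_{F})$ a complete Ricci-flat spin manifold carrying a non-zero parallel spinor, so that $(F,g_{F})$ is stable by Dai-Wang-Wei (\cite{DWW05}). One then finishes either by quoting the strict stability of imaginary Killing spinor manifolds from \cite{Kro15}, or --- staying within the method of this paper --- by revisiting the Bochner-type identity behind Theorem \ref{EinsteinOperatorEstimateType1}, retaining the non-negative term that was discarded there and using the vector field $X$ with $X\cdot\sigma=\sqrt{-1}\sigma$ (equivalently, the product splitting) to show that this term already dominates $\bigl(n(n-2)-4\bigr)\mu^{2}\int_{M}\langle h,h\rangle\,dvol_{g}$. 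Either way one obtains a strictly positive lower bound, which with the type II case gives Corollary \ref{ImaginaryStability}.

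The step I expect to be the main obstacle is this last one: promoting the sharp but non-positive estimate of Theorem \ref{EinsteinOperatorEstimateType1} to a strictly positive one when $n\ge 4$. The difficulty is the non-constant length function $|\sigma|^{2}$ flagged in the introduction --- in the Bochner identity $|\sigma|^{2}$ enters as a weight that cannot be divided out, so one must carry it through all the estimates and control it using the fact that $|\sigma|^{2}$ grows or decays exponentially along the flow lines of $X$ on a type I geometry. Arranging this weighted computation --- for instance via an integration by parts with cut-offs that converts the non-constancy of $|\sigma|^{2}$ into an extra favourable first-order term --- is where the substance of the argument lies.
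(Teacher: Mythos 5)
Your decomposition into type I and type II, and your treatment of type II via Baum's classification as hyperbolic space together with Koiso's strict stability, match the paper. But your analysis of the type I case has the signs exactly backwards, and this is a genuine error. The Killing constant $\mu$ in Theorem \ref{EinsteinOperatorEstimateType1} is \emph{purely imaginary}, $\mu=\sqrt{-1}\nu$ with $\nu\in\mathbb{R}\setminus\{0\}$, so $\mu^{2}=-\nu^{2}<0$ and the lower bound $-[n(n-2)-4]\mu^{2}=[n(n-2)-4]\nu^{2}$ is \emph{strictly positive} precisely when $n\geq 4$. Hence for $n\geq 4$ the theorem already yields strict stability with no further work; the programme you sketch for $n\geq 4$ --- retaining discarded non-negative terms in the Bochner identity, carrying the weight $|\sigma|^{2}$ through a cut-off integration by parts, etc. --- is solving a problem that does not exist, and as written its key step (``show that this term already dominates $\dots$'') is in any case left unexecuted, so it would not constitute a proof even if it were needed.

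Conversely, the cases you declare immediate, $n=2$ and $n=3$, are exactly the ones where the theorem gives a non-positive constant ($[n(n-2)-4]\nu^{2}\leq-\nu^{2}<0$ there) and therefore does not suffice. The paper closes these cases by observing that a complete type I manifold of dimension $n\leq 3$ is a warped product $(F^{n-1}\times\mathbb{R},e^{-4\nu t}h+dt^{2})$ over a manifold with a parallel spinor, which in dimension $\leq 2$ is flat, so $(M^{n},g)$ has constant sectional curvature $-4\nu^{2}$ and is strictly stable by Koiso's result. The correct structure is therefore: type II $\Rightarrow$ hyperbolic $\Rightarrow$ strictly stable; type I with $n\geq 4$ $\Rightarrow$ Theorem \ref{EinsteinOperatorEstimateType1} directly; type I with $n\leq 3$ $\Rightarrow$ constant negative sectional curvature. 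Your write-up inverts the roles of the last two cases and leaves the one it (wrongly) identifies as hard without an actual argument.
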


H. Baum proved that $n$-dimensional complete Riemannian manifolds with imaginary Killing spinors of type \uppercase\expandafter{\romannumeral2} with Killing constant $\sqrt{-1}\nu$ are isometric to the $n$-dimensional hyperbolic space $H^{n}_{-4\nu^{2}}$ with constant sectional curvature $-4\nu^{2}$. N. Koiso proved that Einstein manifolds with negative sectional curvature, in particular, hyperbolic spaces, are stable in \cite{Koi79} (also see \cite{Bes87}). Indeed, by the first inequality in 12.70 in \cite{Bes87}, one can see that $\langle\nabla^{*}\nabla h-2\mathring{R}h, h\rangle_{L^{2}}\geq4(n-2)\nu^{2}\langle h, h\rangle_{L^{2}}$ for all compactly supported traceless transverse 2-tensors $h$ on the hyperbolic space $H^{n}_{-4\nu^{2}}$. Therefore, we focus on Riemannian manifolds with imaginary Killing spinors of type \uppercase\expandafter{\romannumeral1} and by applying Theorem $\ref{EinsteinOperatorEstimateType1}$ we obtain a new prove for Corollary $\ref{ImaginaryStability}$, which has been proved in \cite{Kro15}.

In the case of real Killing spinors, which have constant length functions, by doing an integration by parts, the Bochner type formula in \cite{DWW05} and \cite{Wan91} gives us a lower bound $-(n-1)^{2}\mu^{2}$ for the Einstein operator on $n$-dimensional manifolds with real Killing spinors with Killing constant $\mu$. In \cite{GHP03}, they used essentially the same argument to obtain a lower bound $-(n^{2}-10n+9)\mu^{2}$ for the Lichnerowicz Laplacian on $n$-dimensional manifolds with real Killing spinors with the Killing constant $\mu$. On an Einstein manifold, the Einstein operator is just a constant shift of the Lichnerowicz Laplacian, and these two estimates are equivalent.

Unlike the case of imaginary Killing spinors, from this estimate we cannot conclude a general stability result. Actually, we have both stable and unstable examples: standard spheres are stable Riemannian manifolds with real Killing spinors; the Jensen's sphere (also called the squashed sphere) is an unstable Riemannian manifold with a real Killing spinor. We refer to \cite{ADP83}, \cite{Bar93}, \cite{Bes87}, \cite{Jen73}, and \cite{Spa11} for the Jensen's sphere.

Riemannian manifolds with non-zero real Killing spinors are either standard spheres in even dimensions, except in 6 dimension, or Sasaki-Einstein in odd dimensions, except in 7 dimension. We know that standard spheres are strictly stable. Therefore, we focus on the stability of Sasaki-Einstein manifolds, especially regular Sasaki-Einstein manifolds, which then are the total spaces of principal $S^{1}$-bundles over K\"{a}hler-Einstein manifolds. We will use this structure property to study the stability of regular Sasaki-Einstein manifolds, and we obtain many new unstable examples of manifolds with real Killing spinors.

Let $\pi: (M^{2p+1}, g)\rightarrow (B^{2p}, G, J)$ be a principal $S^{1}$-bundle with a connection $\eta$, where $(M^{2p+1}, g)$ is regular Sasaki-Einstein, $(B^{2p}, G, J)$ is K\"{a}hler-Einstein, and $\pi$ is a Riemannian submersion. Here $G$ is the K\"{a}hler metric on $B^{2p}$, and $J$ is the almost complex structure on $B^{2p}$. We denote $\tilde{h}=\pi^{*}h$, for all symmetric 2-tensors $h\in C^{\infty}(B, S^{2}(B))$. Then we have the following relationship between Einstein operators on the base Einstein manifold $B$ and the total Einstein manifold $M$.
\begin{thm}\label{EinsteinOperatorRelation1}
\begin{equation}
\langle(\nabla^{g})^{*}\nabla^{g}\tilde{h}-2\mathring{R}^{g}\tilde{h}, \tilde{h}\rangle=(\langle(\nabla^{G})^{*}\nabla^{G}h-2\mathring{R}^{G}h, h\rangle+4\langle h, h\rangle+4\langle h\circ J, h\rangle)\circ\pi,
\end{equation}
where $h\circ J\in C^{\infty}(B, S^{2}(B))$ with $h\circ J(X, Y)=h(JX, JY)$.
\end{thm}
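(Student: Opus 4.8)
The plan is a pointwise Riemannian-submersion computation. Both sides of the stated identity are functions on $M^{2p+1}$, so fix $x\in M$, pick a local $G$-orthonormal frame $\bar e_{1},\dots,\bar e_{2p}$ on $B$ that is $\nabla^{G}$-parallel at $\pi(x)$, let $e_{1},\dots,e_{2p}$ be its horizontal lift and $e_{0}=\xi$ the unit vertical (Reeb) vector field, so $e_{0},\dots,e_{2p}$ is a $g$-orthonormal frame near $x$. Throughout we use that $\tilde h=\pi^{*}h$ is \emph{basic}: it kills $\xi$, and $\tilde h(X,Y)=\big(h(\bar X,\bar Y)\big)\circ\pi$ for horizontal $X,Y$; in particular $\langle\pi^{*}\alpha,\pi^{*}\beta\rangle_{g}=\langle\alpha,\beta\rangle_{G}\circ\pi$ for basic tensors and $\langle\tilde h\circ\phi,\tilde h\rangle_{g}=\langle h\circ J,h\rangle_{G}\circ\pi$, where $\phi$ denotes the horizontal lift of $J$ and $(\tilde h\circ\phi)(X,Y):=\tilde h(\phi X,\phi Y)$. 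From the regular Sasaki-Einstein structure we need only: (i) $[\xi,X]=0$ for a horizontal lift $X$ (horizontal lifts are $S^{1}$-invariant); (ii) $\nabla^{g}_{X}\xi=-\phi X$ with $\phi^{2}=-\mathrm{id}$ on the horizontal distribution; (iii) the fibers are totally geodesic, so the only nonvanishing O'Neill tensor is $A_{X}Y=\langle\phi X,Y\rangle\,\xi$ on horizontal $X,Y$ (equivalently $A_{X}\xi=-\phi X$); and (iv) the completeness relations $\sum_{i}\langle\phi e_{i},X\rangle\,\phi e_{i}=X$ and $\sum_{i}\langle\phi e_{i},X\rangle\,e_{i}=-\phi X$ for horizontal $X$.

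First I would compute the rough Laplacian. Writing $(\nabla^{g})^{*}\nabla^{g}\tilde h=-\sum_{a=0}^{2p}\big(\nabla^{g}_{e_{a}}\nabla^{g}_{e_{a}}\tilde h-\nabla^{g}_{\nabla^{g}_{e_{a}}e_{a}}\tilde h\big)$ and separating the vertical index $a=0$: from (i), (ii) one gets $\nabla^{g}_{\xi}\tilde h(X,Y)=\tilde h(\phi X,Y)+\tilde h(X,\phi Y)$ on horizontal $X,Y$ and $\nabla^{g}_{\xi}\xi=0$, and iterating with $\phi^{2}=-\mathrm{id}$ gives $\nabla^{g}_{\xi}\nabla^{g}_{\xi}\tilde h=2(\tilde h\circ\phi-\tilde h)$, so the vertical term contributes $2\tilde h-2\,\tilde h\circ\phi$. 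For the horizontal sum, $\nabla^{g}_{e_{i}}\tilde h$ coincides with $\pi^{*}(\nabla^{G}_{\bar e_{i}}h)$ on horizontal arguments but has a $\xi$-component $(\nabla^{g}_{e_{i}}\tilde h)(\xi,Z)=\tilde h(\phi e_{i},Z)$; running this through the second derivative — where the vertical part $A_{e_{i}}Y$ of $\nabla^{g}_{e_{i}}Y$ pairs with that $\xi$-component — and using the first relation in (iv), one gets an extra $2\tilde h$ on horizontal arguments. Hence, on horizontal vectors,
\begin{equation}
(\nabla^{g})^{*}\nabla^{g}\tilde h=\pi^{*}\big((\nabla^{G})^{*}\nabla^{G}h\big)+4\,\tilde h-2\,\tilde h\circ\phi ,
\end{equation}
the $\xi$-components of the left side being irrelevant, as they are annihilated by $\tilde h$ in the pairing.

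Next I would compute the curvature term. Since $\tilde h$ is horizontal, $(\mathring R^{g}\tilde h)(X,Y)=\sum_{i,j=1}^{2p}R^{g}(X,e_{i},Y,e_{j})\,\tilde h(e_{i},e_{j})$, only horizontal curvature slots entering. Substituting O'Neill's curvature formula for a submersion with totally geodesic fibers and using $\langle A_{X}Y,A_{Z}W\rangle=\langle\phi X,Y\rangle\langle\phi Z,W\rangle$ together with the relations in (iv) and the vanishing $\sum_{i,j}\langle\phi e_{i},e_{j}\rangle\tilde h(e_{i},e_{j})=0$ (antisymmetric against symmetric), every $A$-term collapses to a multiple of $\tilde h(\phi X,\phi Y)$; the coefficient, fixed by the sign in O'Neill's identity (the one under which horizontal sectional curvature increases downstairs, $K^{G}=K^{g}+3|A|^{2}$), equals $-3$, so on horizontal vectors
\begin{equation}
\mathring R^{g}\tilde h=\pi^{*}\big(\mathring R^{G}h\big)-3\,\tilde h\circ\phi .
\end{equation}
Pairing the two displayed identities with $\tilde h$ and using $\langle\pi^{*}\alpha,\pi^{*}\beta\rangle_{g}=\langle\alpha,\beta\rangle_{G}\circ\pi$, $\langle\tilde h,\tilde h\rangle_{g}=\langle h,h\rangle_{G}\circ\pi$ and $\langle\tilde h\circ\phi,\tilde h\rangle_{g}=\langle h\circ J,h\rangle_{G}\circ\pi$ gives $\langle(\nabla^{g})^{*}\nabla^{g}\tilde h-2\mathring R^{g}\tilde h,\tilde h\rangle=\big(\langle(\nabla^{G})^{*}\nabla^{G}h-2\mathring R^{G}h,h\rangle+4\langle h,h\rangle+(-2-2\cdot(-3))\langle h\circ J,h\rangle\big)\circ\pi$, which is the asserted formula.

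The main obstacle is the bookkeeping in the first two steps: because $\nabla^{g}\tilde h$ is not basic, the vertical direction feeds into the rough Laplacian twice — through $\nabla^{g}_{\xi}$ and through the O'Neill tensor $A$ hidden in the horizontal part — and one must also commit to the correct sign in O'Neill's curvature identity. Keeping careful track of $\phi^{2}=-\mathrm{id}$ and of the two completeness relations in (iv) is exactly what forces the coefficient of $\tilde h\circ\phi$ to be $-2$ in the rough Laplacian and $-3$ in $\mathring R^{g}$, hence $+4$ in the final formula; everything else is the standard fact that the horizontal Laplacian of a basic tensor descends to the Laplacian on the base.
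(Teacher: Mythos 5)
Your proposal is correct and follows essentially the same route as the paper: split the rough Laplacian into its vertical ($\nabla^{g}_{\xi}\nabla^{g}_{\xi}$) and horizontal contributions using the identities $[\xi,\widetilde X]=0$, $\nabla^{g}_{\widetilde X}\xi=-\phi\widetilde X$ and $\nabla^{g}_{\widetilde X}\widetilde Y=\widetilde{\nabla^{G}_{X}Y}-\Omega(X,Y)\xi$, obtaining $\pi^{*}((\nabla^{G})^{*}\nabla^{G}h)+4\tilde h-2\tilde h\circ\phi$, then substitute O'Neill's curvature formula to get $\mathring R^{g}\tilde h=\pi^{*}(\mathring R^{G}h)-3\tilde h\circ\phi$ modulo a term proportional to $\Omega\cdot\mathrm{tr}_{G}(h(J\cdot,\cdot))$, which vanishes by the symmetric-against-antisymmetric contraction (the paper's $\mathrm{tr}_{G}(h(J\cdot,\cdot))=0$). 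The coefficients $-2$, $-3$, and hence $+4$ all agree with the paper's Lemmas on the Laplacian and curvature contraction, so no gap.
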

\begin{cor}
If there exists a traceless transverse symmetric 2-tensor $h\in C^{\infty}(B, S^{2}(B))$ such that $\int_{B}(\langle(\nabla^{G})^{*}\nabla^{G}h-2\mathring{R}^{G}h, h\rangle dvol_{G}<-8\int_{B}\langle h,h\rangle dvol_{G}$, then $(M^{2p+1}, g)$ is unstable.
\end{cor}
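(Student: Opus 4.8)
The plan is to use Theorem \ref{EinsteinOperatorRelation1} to transfer an instability witness on the base $B$ to one on the total space $M$, being careful about two technical points: first, that the pulled-back tensor $\tilde h = \pi^* h$ is again traceless and transverse (divergence-free) with respect to the Sasaki-Einstein metric $g$, so that it is an admissible test tensor in the definition of stability; and second, that the integral inequality on $B$ implies the corresponding integral inequality on $M$ via the Riemannian submersion $\pi$.

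First I would check that $\tilde h$ lies in the right function space. Since $\pi$ is a Riemannian submersion with one-dimensional fibers spanned by the Reeb field, the horizontal lift identifies $TB$ with the horizontal subbundle of $TM$, and $\tilde h = \pi^*h$ is the pullback extended by zero in the vertical directions. Tracelessness is immediate: $\operatorname{tr}_g \tilde h = (\operatorname{tr}_G h)\circ\pi = 0$. For the transverse (divergence-free) condition I would compute $\delta_g \tilde h$ in terms of $\delta_G h$: because the fibers are totally geodesic (a standard fact for the Boothby--Wang / regular Sasaki construction, where the Reeb vector field is Killing and the $O'Neill$ tensors simplify) the divergence of the horizontal tensor $\tilde h$ reduces to $(\delta_G h)\circ\pi$ plus a term involving the contraction of $h$ with the curvature of the connection $\eta$; I would need to check this extra term vanishes or is itself zero under $\delta_G h = 0$. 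Granting this, $\delta_G h = 0$ gives $\delta_g \tilde h = 0$, so $\tilde h$ is traceless transverse on $(M,g)$.

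Next, integrate the pointwise identity of Theorem \ref{EinsteinOperatorRelation1} over $M$. Using the submersion structure, for any function $f$ on $B$ we have $\int_M (f\circ\pi)\, dvol_g = L \int_B f\, dvol_G$, where $L > 0$ is the (constant) length of the $S^1$-fibers. Applying this to $f = \langle(\nabla^{G})^{*}\nabla^{G}h-2\mathring{R}^{G}h, h\rangle + 4\langle h,h\rangle + 4\langle h\circ J, h\rangle$ and to $f = \langle h, h\rangle$ gives
\begin{equation}
\int_M \langle(\nabla^g)^*\nabla^g\tilde h - 2\mathring R^g \tilde h, \tilde h\rangle\, dvol_g = L\int_B \left(\langle(\nabla^{G})^{*}\nabla^{G}h-2\mathring{R}^{G}h, h\rangle + 4\langle h,h\rangle + 4\langle h\circ J, h\rangle\right) dvol_G.
\end{equation}
Now estimate the cross term: since $J$ is an isometry of $(B,G)$, we have $|\langle h\circ J, h\rangle| \le \langle h\circ J, h\circ J\rangle^{1/2}\langle h,h\rangle^{1/2} = \langle h,h\rangle$ pointwise by Cauchy--Schwarz and $\langle h\circ J, h\circ J\rangle = \langle h,h\rangle$, hence $4\langle h,h\rangle + 4\langle h\circ J, h\rangle \le 8\langle h,h\rangle$. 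Combining with the hypothesis $\int_B \langle(\nabla^{G})^{*}\nabla^{G}h-2\mathring{R}^{G}h, h\rangle\, dvol_G < -8\int_B \langle h,h\rangle\, dvol_G$ yields $\int_M \langle(\nabla^g)^*\nabla^g\tilde h - 2\mathring R^g\tilde h, \tilde h\rangle\, dvol_g < 0$, so $(M^{2p+1},g)$ is unstable. Note that $\tilde h$ is not identically zero because $h$ is not, so it is a genuine witness.

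The main obstacle I anticipate is the verification that $\tilde h$ is transverse on $(M,g)$, i.e. that pullback along the submersion preserves the divergence-free condition. This requires knowing precisely how $\delta_g$ acts on basic (horizontal, pullback) symmetric $2$-tensors in terms of $\delta_G$ and the O'Neill tensors of the submersion; for the regular Sasaki-Einstein fibration the fibers are totally geodesic and the Reeb flow is isometric, which should force the correction terms to drop out, but this is the step that needs genuine care rather than a one-line citation. Everything else — tracelessness, the Fubini-type integral identity over the $S^1$-fibers, and the Cauchy--Schwarz bound on the $h\circ J$ term — is routine.
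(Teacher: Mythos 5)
Your proposal is correct and follows essentially the same route as the paper: pull back $h$ to $\tilde h=\pi^{*}h$, check that it remains traceless transverse, integrate the identity of Theorem \ref{EinsteinOperatorRelation1}, and bound $\langle h\circ J,h\rangle\leq\langle h,h\rangle$ (the paper proves this via the coordinate identity $(\ref{InequalityFor2-tensor})$ in a $J$-adapted frame, which is equivalent to your Cauchy--Schwarz argument since $h\circ J$ has the same norm as $h$). The one step you flagged as needing care is settled in the paper by the direct computation $(\delta_{g}\tilde{h})(\widetilde{X})=(\delta_{G}h)(X)$ and $(\delta_{g}\tilde{h})(\xi)=-tr_{G}(h(J\cdot,\cdot))=0$, the latter vanishing identically for any symmetric $h$ by symmetry in a frame $\{X_{1},\dots,X_{p},JX_{1},\dots,JX_{p}\}$, exactly as you anticipated.
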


Moreover, we know the Riemannian product of two Einstein manifolds with the same Einstein constant is an unstable Einstein manifold with a canonical unstable direction. By applying Theorem $\ref{EinsteinOperatorRelation1}$, we obtain that the lift of this canonical unstable direction is an unstable direction on the total space if the base is the product of two K\"{a}hler-Einstein manifolds, and therefore we have the following corollary.
\begin{cor}\label{UnstableRealKillingSpinors}
If the base space $(B^{2p}, g)$ is a product of two K\"{a}hler-Einstein manifolds, then $(M^{2p+1}, g)$ is unstable.
\end{cor}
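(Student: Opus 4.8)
The plan is to reduce the statement to the Corollary that immediately follows Theorem~\ref{EinsteinOperatorRelation1}: it suffices to exhibit a single nonzero traceless transverse symmetric $2$-tensor $h\in C^{\infty}(B,S^{2}(B))$ on the K\"{a}hler--Einstein base with
\[
\int_{B}\langle(\nabla^{G})^{*}\nabla^{G}h-2\mathring{R}^{G}h, h\rangle\,dvol_{G}<-8\int_{B}\langle h,h\rangle\,dvol_{G}.
\]
So the first step is to record the structural facts. Write $B=B_{1}\times B_{2}$ with each $(B_{i},G_{i},J_{i})$ K\"{a}hler--Einstein and $\dim_{\cc}B_{i}=p_{i}$, so that $p_{1}+p_{2}=p$. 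Since $B$ is a Riemannian product of Einstein manifolds, each factor is Einstein with the \emph{same} Einstein constant $\lambda$; and in the normalization fixed by the real Killing spinor on $M^{2p+1}$, for which $\mathrm{Ric}_{g}=2p\,g$, one has $\mathrm{Ric}_{G}=\lambda G$ with $\lambda=2p+2$ --- a value I would pin down from the submersion (O'Neill-type) identity $\mathrm{Ric}_{g}=\mathrm{Ric}_{G}-2G$ on horizontal vectors, cross-checking it against the Hopf fibration $S^{2p+1}\to\mathbb{C}P^{p}$. Note also that $\dim_{\cc}B_{i}\geq1$ forces $p\geq2$, and that $B$ and $M$ are compact, so compact support is automatic.

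The second step is to take the canonical destabilizing direction of a Riemannian product of two Einstein manifolds with equal Einstein constant,
\[
h=p_{2}\,\pi_{1}^{*}G_{1}-p_{1}\,\pi_{2}^{*}G_{2}.
\]
Its $G$-trace is $2p_{1}p_{2}-2p_{1}p_{2}=0$, and since each $G_{i}$ is parallel for the product connection, $\nabla^{G}h=0$; hence $\delta_{G}h=0$ and $(\nabla^{G})^{*}\nabla^{G}h=0$, so $h$ is a nonzero traceless transverse symmetric $2$-tensor. Because the curvature of the product is block-diagonal and each factor $B_{i}$ is Einstein with constant $\lambda$, the restriction of $\mathring{R}^{G}h$ to each factor is $\lambda$ times the restriction of $h$ there, and the mixed blocks vanish; thus $\mathring{R}^{G}h=\lambda h$, so that $(\nabla^{G})^{*}\nabla^{G}h-2\mathring{R}^{G}h=-2\lambda h$ and, integrating over the compact base,
\[
\int_{B}\langle(\nabla^{G})^{*}\nabla^{G}h-2\mathring{R}^{G}h, h\rangle\,dvol_{G}=-2\lambda\int_{B}\langle h,h\rangle\,dvol_{G}=-(4p+4)\int_{B}\langle h,h\rangle\,dvol_{G}.
\]
Since $p\geq2$ we have $4p+4\geq12>8$, so the required strict inequality holds and the Corollary after Theorem~\ref{EinsteinOperatorRelation1} gives that $(M^{2p+1},g)$ is unstable. (Equivalently one can feed $h$ straight into Theorem~\ref{EinsteinOperatorRelation1}: each $G_{i}$ being Hermitian gives $h\circ J=h$, and the relation there becomes $\langle(\nabla^{g})^{*}\nabla^{g}\tilde{h}-2\mathring{R}^{g}\tilde{h},\tilde{h}\rangle=(4-4p)\langle h,h\rangle\circ\pi<0$.)

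I do not expect a serious obstacle, since the substantive work is already done in Theorem~\ref{EinsteinOperatorRelation1} and its Corollary; the only point requiring genuine care is the bookkeeping of the Einstein normalization of the base. The whole argument turns on the strict inequality $2\lambda>8$, so one must be sure that $\lambda=2p+2$ in the convention fixed by the real Killing spinor and compatible with the constant $4$ appearing in Theorem~\ref{EinsteinOperatorRelation1}. Fortunately this is never borderline: a product forces $p\geq2$, hence $\lambda=2p+2\geq6>4$ with room to spare. The remaining ingredients --- the block-diagonal curvature computation on the product, the transverse-traceless property of $h$, and the identification of $\pi^{*}h$ as the correct tensor on $M$ --- are routine, the last already being contained in the statements invoked.
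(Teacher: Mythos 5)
Your proposal is correct and follows essentially the same route as the paper: it uses the same canonical destabilizing direction (your $h=p_{2}\,\pi_{1}^{*}G_{1}-p_{1}\,\pi_{2}^{*}G_{2}$ is a constant multiple of the paper's $\frac{G_{1}}{2p_{1}}-\frac{G_{2}}{2p_{2}}$), the same normalization $\mathrm{Ric}_{G}=(2p+2)G$, and the same reduction via Theorem~\ref{EinsteinOperatorRelation1}, arriving at the identical value $(4-4p)\langle h,h\rangle<0$. The only cosmetic difference is that you route the conclusion through the $-8$ threshold corollary while the paper substitutes directly into the theorem, and you also note that direct route yourself.
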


Corollary $\ref{UnstableRealKillingSpinors}$ provides us many new examples of unstable manifolds with real Killing spinors. Before this, the Jensen's sphere is the only known example of unstable manifolds with real Killing spinors. However, the Jensen's sphere is very speical. It is the only known manifold with exactly one linearly independent real Killing spinor, and therefore it is not Sasaki-Einstein. Corollary $\ref{UnstableRealKillingSpinors}$ shows the existence of more generic unstable manifolds with real Killing spinors.

The paper is organized as follows. In Section 2, we will review classification results of Riemannian manifolds with Killing spinors and some properties of imaginary Killing spinors, which will be used in Section 4. In Section 3, we will present a proof of the Bochner type formula from Killing spinors in \cite{DWW05} and \cite{Wan91}. In Section 4, we will prove Theorem $\ref{EinsteinOperatorEstimateType1}$ and Corollary $\ref{ImaginaryStability}$. In Section 5, we will briefly discuss stability of manifolds with real Killing spinors by using the Bochner type formula. In Section 6, we will give a brief introduction for Sasaki-Einstein manifolds, and prove Theorem $\ref{EinsteinOperatorRelation1}$ and its corollaries.


\section{Riemannian manifolds with imaginary Killing spinors}
\noindent In this section, we review classification results of Riemannian manifolds with Killing spinors and some properties of Killing spinors. We will mainly focus on complete Riemannian manifolds with imaginary Killing spinors studied in \cite{Bau89_1} and \cite{Bau89}, because Baum's results about the structure of complete Riemannian manifolds with imaginary Killing spinors play a very important role in our estimate of the Einstein operator on these manifolds.

Let $(M^{n},g)$ be a Riemannian manifold with a non-zero Killing spinor $\sigma$ with the Killing constant $\mu\neq0$, i.e.
\begin{equation}\label{KillingSpinorEquation}
\nabla^{S}_{X}\sigma=\mu X\cdot\sigma,
\end{equation}
for any vector field $X$, where $\nabla^{S}$ denotes the canonical connection on the spinor bundle induced by the Levi-Civita connection on the tangent bundle $TM$, and `` $\cdot$ " denotes the Clifford multiplication. Then the Riemannian manifold $(M^{n},g)$ is an Einstein manifold with scalar curvature $R=4n(n-1)\mu^{2}$ (see, e.g. \cite{Fri00}). Because the scalar curvature is real, $\mu$ can only be real or purely imaginary. A non-zero Killing spinor is said to be imaginary (resp. real) if its Killing constant is imaginary (resp. real). For details about spin geometry, we refer to \cite{Fri00} and \cite{LM89}

Let us first recall two differences between manifolds with real Killing spinors and manifolds with imaginary Killing spinors pointed out in \cite{Bau89} (also see \cite{CGLS86}):
\begin{enumerate}
\item Let $(M^{n},g)$ be a complete Riamnnian manifold with a Killing spinor $\sigma$. If $\sigma$ is real (with a non-zero real Killing constant), then $M^{n}$ is compact. If $\sigma$ is imaginary, then $M^{n}$ is non-compact.
\item Let $f(x):=\langle \sigma(x),\sigma(x)\rangle_{\mathcal{S}_{x}}$ denote  the length function of a non-zero Killing spnior $\sigma$. If $\sigma$ is real, then $f$ is constant. If $\sigma$ is imaginary, then $f$ is a non-constant and nowhere vanishing function.
\end{enumerate}

As pointed out by Klaus Kr\"{o}ncke in \cite{Kro15}, the fact that the length function $f$ of an imaginary Killing spinor is not constant will cause some issues when we use the Bochner type argument in \cite{DWW05} to estimate the Einstein operator on a Riemannian manifold with imaginary Killing spinors. In order to deal with the issues, we investigate the length function $f$ more carefully, and we recall some properties of the length function $f$ proved in \cite{Bau89}. Let $(M^{n},g)$ be a complete Riemannian manifold with an imaginary Kiling spinor $\sigma$ with Killing constant $\mu=\sqrt{-1}\nu$.
\begin{lem}[\cite{Bau89}]
\begin{enumerate}
\item The function
      \begin{equation}\label{Equaiton1ForLengthFunction}
      q_{\sigma}(x):=f^{2}(x)-\frac{1}{4\nu^{2}}|\nabla f(x)|^{2}
      \end{equation}
      is constant on $M^{n}$.
\item Let $\{e_{1},\cdots,e_{n}\}$ be a local orthonormal frame of $TM$ around $x$. The we have
      \begin{equation}\label{OrthogonalityOfSpinors}
      Re\langle e_{i}\cdot\sigma(x),e_{j}\cdot\sigma(x)\rangle=\delta_{ij}f(x),
      \end{equation}
      where $Re$ means taking the real part.
\item Let $dist$ denote the distance in $\mathcal{S}_{x}$ with respect to the real scalar product $Re\langle,\rangle_{\mathcal{S}_{x}}$. Then
      \begin{equation}\label{Equation2ForLengthFunction}
      q_{\sigma}=f(x)\cdot dist^{2}(V_{\sigma},\sqrt{-1}\sigma(x))\geq0,
      \end{equation}
      where $V_{\sigma}(x)=\{X\cdot\sigma(x)| \ \ X\in T_{x}M\}\subset \mathcal{S}_{x}$.
\end{enumerate}
\end{lem}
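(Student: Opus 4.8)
The plan is to derive all three parts directly from the Killing spinor equation \eqref{KillingSpinorEquation} together with the two standard algebraic facts that Clifford multiplication by a real tangent vector is skew-Hermitian for the spinor metric, $\langle X\cdot\psi,\phi\rangle=-\langle\psi,X\cdot\phi\rangle$, and that $X\cdot X\cdot\psi=-|X|^{2}\psi$. Note first that $f=\langle\sigma,\sigma\rangle>0$ everywhere, since $f$ is nowhere vanishing and the spinor Hermitian metric is positive definite, and that $\overline{\langle X\cdot\sigma,\sigma\rangle}=\langle\sigma,X\cdot\sigma\rangle=-\langle X\cdot\sigma,\sigma\rangle$, so $\langle X\cdot\sigma,\sigma\rangle=\sqrt{-1}\,b(X)$ for a real $1$-form $b$. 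Part (2) is then purely algebraic: for $i=j$ one has $\langle e_{i}\cdot\sigma,e_{i}\cdot\sigma\rangle=-\langle\sigma,e_{i}\cdot e_{i}\cdot\sigma\rangle=\langle\sigma,\sigma\rangle=f$, already real; for $i\neq j$, applying skew-Hermitianity twice together with $e_{i}\cdot e_{j}=-e_{j}\cdot e_{i}$ shows $\langle\sigma,e_{i}\cdot e_{j}\cdot\sigma\rangle$ is purely imaginary, hence $\langle e_{i}\cdot\sigma,e_{j}\cdot\sigma\rangle=-\langle\sigma,e_{i}\cdot e_{j}\cdot\sigma\rangle$ has vanishing real part. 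Polarizing the diagonal case yields $\mathrm{Re}\langle X\cdot\sigma,Y\cdot\sigma\rangle=g(X,Y)f$ for all $X,Y$, which is the form needed below.

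For part (1) I would compute the first two derivatives of $f$. Differentiating $f=\langle\sigma,\sigma\rangle$ and substituting \eqref{KillingSpinorEquation} gives $X(f)=2\sqrt{-1}\nu\langle X\cdot\sigma,\sigma\rangle=-2\nu\,b(X)$, so $\nabla f$ is the metric dual of $-2\nu\,b$. Differentiating again, using \eqref{KillingSpinorEquation}, the identity $X\cdot X\cdot\sigma=-|X|^{2}\sigma$, and the polarized identity from part (2), I expect the Hessian to collapse to $\mathrm{Hess}\,f=4\nu^{2}f\,g$; concretely, taking $Y$ parallel at the point one computes $\mathrm{Hess}\,f(X,Y)=X(Yf)=-4\nu^{2}\,\mathrm{Re}\langle Y\cdot X\cdot\sigma,\sigma\rangle=4\nu^{2}g(X,Y)f$. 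Granting this, $X(|\nabla f|^{2})=2\,\mathrm{Hess}\,f(\nabla f,X)=8\nu^{2}f\,X(f)$, so $X(q_{\sigma})=2fX(f)-\frac{1}{4\nu^{2}}\cdot 8\nu^{2}fX(f)=0$, and $q_{\sigma}$ is constant on the connected manifold $M$.

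For part (3), by part (2) the $n$ vectors $e_{i}\cdot\sigma$ are mutually orthogonal of squared length $f$ in $(\mathcal{S}_{x},\mathrm{Re}\langle\cdot,\cdot\rangle)$, so they form a basis of the real $n$-plane $V_{\sigma}$. Since $\mathrm{Re}\langle\sqrt{-1}\sigma,e_{i}\cdot\sigma\rangle=\mathrm{Re}\big(\sqrt{-1}\,\overline{\langle e_{i}\cdot\sigma,\sigma\rangle}\big)=b(e_{i})$, the orthogonal projection of $\sqrt{-1}\sigma$ onto $V_{\sigma}$ is $\frac{1}{f}\sum_{i}b(e_{i})\,e_{i}\cdot\sigma$, with squared length $\frac{1}{f}\sum_{i}b(e_{i})^{2}$, while $|\sqrt{-1}\sigma|^{2}=\mathrm{Re}\langle\sigma,\sigma\rangle=f$. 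The Pythagorean theorem gives $\mathrm{dist}^{2}(V_{\sigma},\sqrt{-1}\sigma)=f-\frac{1}{f}\sum_{i}b(e_{i})^{2}$, hence $f\cdot\mathrm{dist}^{2}(V_{\sigma},\sqrt{-1}\sigma)=f^{2}-\sum_{i}b(e_{i})^{2}$. Since $|\nabla f|^{2}=\sum_{i}(e_{i}f)^{2}=4\nu^{2}\sum_{i}b(e_{i})^{2}$ by the gradient formula above, the right-hand side is exactly $f^{2}-\frac{1}{4\nu^{2}}|\nabla f|^{2}=q_{\sigma}$, and it is non-negative because $f>0$.

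The one step requiring genuine care — and the part I expect to be the main obstacle — is fixing the sign and conjugation conventions for the spinor metric and Clifford multiplication so that the Hessian identity emerges as $\mathrm{Hess}\,f=4\nu^{2}f\,g$ with precisely this constant; once that is in place, parts (1) and (3) follow quickly and part (2) is purely algebraic. It may also be worth recording that completeness plays no role in this lemma — it is a pointwise computation together with connectedness of $M$ — although it is of course essential for Baum's structural description that underlies its use in Section 4.
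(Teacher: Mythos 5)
Your argument is correct in all three parts: the skew-adjointness of Clifford multiplication gives (2) and the purely imaginary $1$-form $b(X)=-\sqrt{-1}\langle X\cdot\sigma,\sigma\rangle$; differentiating $f$ twice with the Killing equation yields $\nabla f = -2\nu\, b^{\sharp}$ and $\mathrm{Hess}\,f=4\nu^{2}f\,g$, which kills $X(q_{\sigma})$; and the Pythagorean computation of the projection of $\sqrt{-1}\sigma$ onto $V_{\sigma}$ identifies $f\cdot \mathrm{dist}^{2}$ with $f^{2}-\sum_{i}b(e_{i})^{2}=q_{\sigma}$. Note, however, that the paper offers no proof of this lemma at all — it is quoted verbatim from Baum \cite{Bau89} — so there is nothing internal to compare against; your derivation is essentially the standard one from Baum's paper, and your closing remarks (that only connectedness, not completeness, is used, and that the only delicate point is the sign conventions for the spinor metric) are accurate.
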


As in \cite{Bau89}, a Killing spinor $\sigma$ is of type \uppercase\expandafter{\romannumeral1} if $q_{\sigma}=0$ and a Killing spinor is of type \uppercase\expandafter{\romannumeral2} if $q_{\sigma}>0$. By $(\ref{Equation2ForLengthFunction})$, this is equivalent to the simple characteristic of Killing spinors of type \uppercase\expandafter{\romannumeral1} and \uppercase\expandafter{\romannumeral2} mentioned in Introduction. H. Baum has the following classification results for complete Riemannian manifold with imaginary Killing spinors.
\begin{thm}[\cite{Bau89}]\label{ClassificationOfType2}
Let $(M^{n},g)$ be a complete connected Riemannian manifold with an imaginary Killing spinor of type \uppercase\expandafter{\romannumeral2} with the Killing constant $\sqrt{-1}\nu$. Then $(M^{n},g)$ is isometric to the hyperbolic space $H^{n}_{-4\nu^{2}}$ with the constant sectional curvature $-4\nu^{2}$.
\end{thm}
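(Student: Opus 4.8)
The plan is to extract from $\sigma$ a closed conformal (``concircular'') vector field whose potential is essentially the length function $f$, to derive from it a Hessian equation of hyperbolic type for $f$, and then to run an Obata--Tashiro type rigidity argument. Write $\mu=\sqrt{-1}\nu$ with $\nu\neq0$ and $f=\langle\sigma,\sigma\rangle$; since a Killing spinor solves a first order linear ODE along curves it is nowhere zero, so $f>0$ on $M^{n}$. Define a global vector field $W$ by $g(W,X)=\mathrm{Im}\langle X\cdot\sigma,\sigma\rangle$. Differentiating $f$ and using the Killing equation $\nabla^{S}_{X}\sigma=\mu X\cdot\sigma$ together with metric-compatibility of $\nabla^{S}$ gives $\nabla f=-2\nu W$; in particular $f$ is non-constant, for otherwise $W\equiv0$ and the next step would force $f\equiv0$.

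First I would compute $\nabla_{X}W$. Expanding $\nabla_{X}\big(g(W,Y)\big)$, substituting the Killing equation in the two derivative terms, and using skew-symmetry of Clifford multiplication by a real tangent vector together with $X\cdot Y+Y\cdot X=-2g(X,Y)$, all the terms collapse to $\nabla_{X}W=-2\nu f\,X$. Thus $W$ is closed conformal, and combined with $\nabla f=-2\nu W$ this yields
\begin{equation}
\nabla^{2}f=4\nu^{2}f\,g\qquad\text{on }M^{n}.
\end{equation}
(Constancy of $q_{\sigma}=f^{2}-\tfrac{1}{4\nu^{2}}|\nabla f|^{2}$ is then automatic, and along a unit-speed geodesic whose velocity is parallel to $\nabla f$ one has $f=A\cosh(2\nu s)+B\sinh(2\nu s)$ with $q_{\sigma}=A^{2}-B^{2}$.)

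Next I would use $q_{\sigma}>0$ (type II). Starting from a point where $\nabla f\neq0$ and following the complete unit-speed geodesic in the gradient direction, the formula above plus $q_{\sigma}>0$ and $f>0$ give $f=\sqrt{q_{\sigma}}\cosh\!\big(2\nu(s-s_{0})\big)$, so $f$ has a critical point $p$. Along every geodesic from $p$ the gradient of $f$ stays radial (zero perpendicular component, by uniqueness for the associated linear ODE), so $f\equiv f(p)\cosh(2\nu s)$ along it; hence $p$ is the only critical point and, via minimizing geodesics and Hopf--Rinow, $f(x)=f(p)\cosh\!\big(2\nu\,d(p,x)\big)$ on all of $M^{n}$. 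Therefore $\rho:=\tfrac{1}{2\nu}\mathrm{arccosh}\!\big(f/f(p)\big)$ is a smooth function on $M^{n}\setminus\{p\}$ equal to $d(p,\cdot)$ and with $|\nabla\rho|\equiv1$ there. Smoothness of the distance function away from $p$ forces the cut locus of $p$ to be empty, so $\exp_{p}$ is a diffeomorphism onto $M^{n}$; in the resulting global geodesic polar coordinates the Hessian equation makes the shape operator of the distance spheres equal $2\nu\coth(2\nu r)\,\mathrm{Id}$, hence $g=dr^{2}+\tfrac{1}{4\nu^{2}}\sinh^{2}(2\nu r)\,g_{S^{n-1}}$, which is the hyperbolic metric of curvature $-4\nu^{2}$.

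The main obstacle, and the only place where completeness is truly used, is this globalization step: upgrading the pointwise consequences of $\nabla^{2}f=4\nu^{2}f\,g$ to a genuine isometry with $H^{n}_{-4\nu^{2}}$. The route above does this by identifying a rescaled $\mathrm{arccosh}$ of $f$ with the distance function from the unique critical point and checking it is smooth off that point, which simultaneously kills cut points and conjugate points; alternatively one can quote the Obata--Tashiro classification of complete manifolds admitting a nonconstant $f$ with $\nabla^{2}f=c^{2}fg$ possessing a critical point. A secondary point requiring care is the Hermitian bookkeeping on the spinor bundle (the linearity slot of $\langle\,,\rangle$, skew-symmetry of Clifford multiplication, reality of $\langle\sigma,\sigma\rangle$) needed to get $\nabla_{X}W=-2\nu f\,X$ with the stated constant; the precise factor $4\nu^{2}$ is what pins the curvature to $-4\nu^{2}$.
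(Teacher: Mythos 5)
Your argument is correct: the Hermitian bookkeeping giving $\nabla f=-2\nu W$ and $\nabla_{X}W=-2\nu f\,X$ checks out, the type~II hypothesis $q_{\sigma}>0$ together with $f>0$ does force a critical point of $f$, and the ensuing Obata--Tashiro globalization is sound. The paper itself states this theorem as a cited result of Baum and gives no proof (it only records the auxiliary facts $(\ref{Equaiton1ForLengthFunction})$--$(\ref{Equation2ForLengthFunction})$, which you re-derive along the way); your route --- extracting the concircular field from the spinor, obtaining $\nabla^{2}f=4\nu^{2}f\,g$, and concluding via the rigidity of complete manifolds carrying such an $f$ with a critical point --- is essentially Baum's original argument, so there is nothing to flag.
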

\begin{thm}[\cite{Bau89_1}, \cite{Bau89}]\label{ClassificationOfType1}
Let $(M^{n},g)$ be a complete connected Riemannian manifold with an imaginary Killing spinor of type \uppercase\expandafter{\romannumeral1} with the Killing constant $\sqrt{-1}\nu$. Then $(M^{n},g)$ is isometric to a warped product $(F^{n-1}\times\mathbb{R}, e^{-4\nu t}h+dt^{2})$, where $(F^{n-1},h)$ is a complete Riemannian manifold with a non-zero parallel spinor.

Conversely, let $(F^{n-1},h)$ be a complete Riemannian manifold with non-zero parallel spinors, then the warped product $(M^{n},g):=(F^{n-1}\times\mathbb{R}, e^{-4\nu t}h+dt^{2})$ is a complete Riemannian manifold with imaginary Killing spinors of type \uppercase\expandafter{\romannumeral1}.
\end{thm}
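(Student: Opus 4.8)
The plan is to reconstruct the warped product from the length function $f=\langle\sigma,\sigma\rangle>0$ and its level sets; write $\mu=\sqrt{-1}\,\nu$. First I would extract, from the Killing equation alone, the pointwise Hessian identity $\nabla df=4\nu^{2}f\,g$: starting from $df(X)=X\langle\sigma,\sigma\rangle=2\sqrt{-1}\,\nu\,\langle X\cdot\sigma,\sigma\rangle$ (which uses the skew-Hermiticity of Clifford multiplication), differentiate a second time, again substituting $\nabla^{S}_{Z}\sigma=\mu\,Z\cdot\sigma$, and simplify using $X\cdot Y+Y\cdot X=-2g(X,Y)$; the $\langle X\cdot Y\cdot\sigma,\sigma\rangle$ terms cancel and only $-2g(X,Y)f$ remains. (This is general for imaginary Killing spinors; the real case returns $df=0$, a consistency check.) Now the type \uppercase\expandafter{\romannumeral1} hypothesis enters: by part (1) of the Lemma and $(\ref{Equaiton1ForLengthFunction})$ it says exactly $|\nabla f|^{2}=4\nu^{2}f^{2}$. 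Hence $t:=-\tfrac{1}{2\nu}\ln f$ is globally defined and smooth, has $|\nabla t|\equiv1$ (so $t$ has no critical points) and $\nabla f=-2\nu f\,\nabla t$, and combining with $\nabla df=4\nu^{2}fg$ gives $\mathrm{Hess}\,t=-2\nu\,(g-dt\otimes dt)$. Equivalently, every level set $\Sigma_{c}=t^{-1}(c)$ is totally umbilic with shape operator $-2\nu\cdot\mathrm{Id}$, a constant independent of the point. That this umbilicity factor is constant is precisely the extra input the spinor supplies; a function with unit gradient alone yields only $ds^{2}+g_{s}$ with the family $g_{s}$ unrestricted.

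Next I would globalize. Since $|\nabla t|\equiv1$, the integral curves of $e_{0}:=\nabla t$ are unit-speed geodesics, so completeness of $M$ makes $e_{0}$ a complete vector field and $t$ surjective onto $\mathbb{R}$. Let $F:=\Sigma_{0}$, a closed embedded hypersurface, connected because $M$ is, and let $\Psi\colon F\times\mathbb{R}\to M$ be the flow of $e_{0}$. Flowing an arbitrary point of $M$ back by its $t$-value to a unique footpoint in $F$ shows $\Psi$ is a diffeomorphism. As $\Psi$ carries $\partial_{s}$ to $e_{0}$ and $F\times\{s\}$ onto $\Sigma_{s}$, one gets $\Psi^{*}g=ds^{2}+g_{s}$, and differentiating while using the umbilicity identity gives $\partial_{s}g_{s}=2\,\mathrm{Hess}\,t|_{TF}=-4\nu\,g_{s}$, so $g_{s}=e^{-4\nu s}g_{0}$. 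With $h:=g_{0}=g|_{TF}$ this is the asserted isometry $(M,g)\cong(F\times\mathbb{R},\,e^{-4\nu t}h+dt^{2})$. Completeness of $(F,h)$ is then immediate: a $g_{0}$-Cauchy sequence in $F$ is Cauchy in the complete warped product (the warping equals $1$ along $F$) and its limit, with $t$-coordinate $0$, stays in $F$.

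To finish the forward direction I would transfer the spinor, using the other description of type \uppercase\expandafter{\romannumeral1}: there is a unit vector field $X$ with $X\cdot\sigma=\sqrt{-1}\,\sigma$, and evaluating $df(X)$ from the spinor against $\nabla f=-2\nu f\,e_{0}$ forces $X=e_{0}$, so $e_{0}\cdot\sigma=\sqrt{-1}\,\sigma$ everywhere. Identifying $\mathcal{S}_{M}|_{F}$ with the spinor bundle of $F$ via the $\pm\sqrt{-1}$-eigenbundles of $e_{0}\cdot$, the spinorial hypersurface formula $\nabla^{\mathcal{S},F}_{Y}\psi=\nabla^{\mathcal{S},M}_{Y}\psi+\tfrac12(AY)\cdot e_{0}\cdot\psi$ (with $A=-2\nu\,\mathrm{Id}$ the shape operator and $Y,\psi$ along $F$), fed the Killing equation and $e_{0}\cdot\sigma=\sqrt{-1}\,\sigma$, gives $\nabla^{\mathcal{S},F}(\sigma|_{F})=0$; since $\langle\sigma,\sigma\rangle=f>0$ this is a non-zero parallel spinor on $(F,h)$. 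For the converse I would run the same circle backwards. Given complete $(F^{n-1},h)$ with a non-zero parallel spinor $\sigma_{F}$, the metric $g:=e^{-4\nu t}h+dt^{2}$ on $M:=F\times\mathbb{R}$ is complete (a Cauchy sequence has bounded $t$-coordinate, and on any bounded $t$-interval $g$ is uniformly equivalent to the product metric); the warped-product Levi-Civita formulas give $\nabla^{\mathcal{S},M}_{\partial_{t}}$ acting as $\partial_{t}$ in the natural frame and $\nabla^{\mathcal{S},M}_{Y}\psi=\nabla^{\mathcal{S},F}_{Y}\psi+\tfrac12(AY)\cdot e_{0}\cdot\psi$ for $Y$ lifted from $F$; one then defines $\sigma$ as the explicit $t$-dependent combination of $\sigma_{F}$ and $e_{0}\cdot\sigma_{F}$ for which both $e_{0}\cdot\sigma=\sqrt{-1}\,\sigma$ and $\nabla^{\mathcal{S},M}_{\partial_{t}}\sigma=\mu\,\partial_{t}\cdot\sigma$ hold, and checks that $\nabla^{\mathcal{S},M}_{Y}\sigma=\mu\,Y\cdot\sigma$ then reduces to $\nabla^{\mathcal{S},F}\sigma_{F}=0$; finally $\sigma$ is nowhere zero and, because $e_{0}\cdot\sigma=\sqrt{-1}\,\sigma$, of type \uppercase\expandafter{\romannumeral1}.

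The hardest parts, I expect, are the two spin-geometric transfers: fixing the identification $\mathcal{S}_{M}|_{F}\cong\mathcal{S}_{F}$ (which depends on the parity of $n$) together with the correct sign in the hypersurface spinor formula for the forward implication, and writing down and verifying the explicit Killing spinor on the warped product in the converse. By contrast the Riemannian side --- reconstructing a warped product from a function with unit gradient and constant umbilicity factor, plus the completeness statements --- is routine once $\nabla df=4\nu^{2}fg$ is in hand, and that identity is really the only indispensable contribution of the spinor to that part of the argument.
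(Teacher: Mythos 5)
This theorem is not proved in the paper at all --- it is quoted from Baum's work \cite{Bau89_1}, \cite{Bau89}, and the only related content in the text is the explicit converse construction $(\ref{KillingSpinor1})$--$(\ref{KillingSpinor2})$ of the Killing spinor on the warped product. So there is no in-paper proof to compare against; what you have written is a reconstruction of Baum's original argument, and in outline it is correct. The Riemannian half is sound: the Hessian identity $\nabla df=4\nu^{2}f\,g$ does follow from two applications of the Killing equation together with skew-Hermiticity of Clifford multiplication, the type \uppercase\expandafter{\romannumeral1} condition $q_{\sigma}=0$ is exactly $|\nabla f|^{2}=4\nu^{2}f^{2}$, and from there $t=-\tfrac{1}{2\nu}\ln f$ has unit gradient with $\mathrm{Hess}\,t=-2\nu(g-dt\otimes dt)$, so the flow of $\nabla t$ gives the global splitting $ds^{2}+e^{-4\nu s}g_{0}$ by the standard argument you describe. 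Your identification of the distinguished direction via $df(X)=-2\nu f$ when $X\cdot\sigma=\sqrt{-1}\sigma$ correctly recovers $(\ref{tDirectionActionOnKillingSpinor})$, and the spinorial Gauss formula then kills the Killing term precisely because $e_{0}\cdot\sigma=\sqrt{-1}\sigma$. The only places requiring genuine care are the ones you already flag: the sign convention in the hypersurface spinor formula (the cancellation $i\nu Y\cdot\sigma-\nu Y\cdot(i\sigma)=0$ flips to a non-cancellation under the opposite convention, which must be absorbed by the choice of unit normal or of the eigenvalue $\pm\sqrt{-1}$), and the parity-dependent identification $\mathcal{S}_{M}|_{F}\cong\mathcal{S}_{F}$ versus $\mathcal{S}_{F}\oplus\mathcal{S}_{F}$, which is exactly why the paper's converse formulas $(\ref{KillingSpinor1})$ and $(\ref{KillingSpinor2})$ differ between the two cases (note in particular that for $n-1$ even only the half-spinor $\psi^{+}$ is used, consistent with your eigenbundle decomposition of $e_{0}\cdot$). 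For a complete writeup these would have to be pinned down explicitly, as in Baum's papers, but as a proof plan nothing is missing.
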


Recall how to construct a Killing spinor of type \uppercase\expandafter{\romannumeral1} on $(F^{n-1}\times\mathbb{R}, e^{-4\nu t}h+dt^{2})$ from a parallel spinor on $(F^{n-1},h)$. When $n-1$ is even, the spinor bundle over the warped product $(F^{n-1}\times\mathbb{R}, e^{-4\nu t}h+dt^{2})$ is isometric to the tensor product of the spinor bundle over $(F^{n-1},h)$ and the spinor bundle over $(\mathbb{R},dt^{2})$. When $n-1$ is odd, the spinor bundle over $(F^{n-1}\times\mathbb{R}, e^{-4\nu t}h+dt^{2})$ is isometric to the direct sum of two copies of the tensor product of the spinor bundle over $(F^{n-1}, h)$ and the spinor bundle over $(\mathbb{R}, dt^{2})$. The spinor bundle over $(\mathbb{R},dt^{2})$ is a trivial 1-dimensional complex vector bundle. We will use the same notation to denote two isometric spinors.
\begin{enumerate}
\item If $n-1$ is even, and parallel spinor on $F^{n-1}$ is $\psi=(\psi^{+},\psi^{-})$, where the decomposition is the $\sqrt{-1}$ and $-\sqrt{-1}$ eigenspaces decomposition for the action of the complex volume $\omega_{\mathbb{C}}=(\sqrt{-1})^{\frac{n}{2}}e_{1}\cdots e_{n-1}$ on the spinor bundle on $F^{n-1}$, then we can take
    \begin{equation}\label{KillingSpinor1}
    \sigma=e^{-\nu t}\psi^{+}\otimes1
    \end{equation}
    as an imaginary Killing spinor of type \uppercase\expandafter{\romannumeral1} on the warped product manifold.
\item If $n-1$ is odd, and parallel spinor on $F^{n-1}$ is $\psi$, then we can take
    \begin{equation}\label{KillingSpinor2}
    \sigma=e^{-\nu t}(\psi\otimes1,\hat{\psi}\otimes1)
    \end{equation}
    as a Killing spinor of type \uppercase\expandafter{\romannumeral1} on the warped product manifold, where $`` \ \ \hat{} \ \ "$ denotes the isomorphism between two spin representations coming from projections to the first and the second components of $Cl(n-1)\otimes\mathbb{C}=End(\mathbb{C}^{\frac{n-2}{2}})\oplus End(\mathbb{C}^{\frac{n-2}{2}})$.
\end{enumerate}

Because the length of a parallel spinor is constant, we can always normalize the parallel spinor $\psi$ on $F$ so that for the Killing spinor $\sigma$ in $(\ref{KillingSpinor1})$ and $(\ref{KillingSpinor2})$ we have
$$\langle\sigma,\sigma\rangle=e^{-2\nu t}.$$
Thus for the Killing spinor obtained above we have the length function
\begin{equation}\label{LengthFunction}
f=e^{-2\nu t}
\end{equation}
only depending on the $t$ variable on $\mathbb{R}$ factor. We can also see that $q_{\sigma}=0$. Moreover, we can see that the action of the vector field $\frac{\partial}{\partial t}$ on the Killing spinor $\sigma$ is given by
\begin{equation}\label{tDirectionActionOnKillingSpinor}
(\frac{\partial}{\partial t})\cdot\sigma=\sqrt{-1}\sigma.
\end{equation}


\section{Bochner type formula}
\noindent In this section, we recall a Bochner type formula coming from Killing spinors in \cite{DWW05} and \cite{Wan91} and present a proof.

Let $(M^{n},g)$ be a Riemannian spin manifold with spinor bundle $\mathcal{S}\rightarrow M$. The curvature of a connection $\nabla$ on a vector bundle $E\rightarrow M$ is defined as
\begin{equation}\label{DefOfCurvature}
R_{XY}\sigma=-\nabla_{X}\nabla_{Y}\sigma+\nabla_{Y}\nabla_{X}\sigma+\nabla_{[X,Y]}\sigma,
\end{equation}
for a section $\sigma\in C^{\infty}(M, E)$ and vector field $X,Y\in C^{\infty}(M, TM)$. Let $R^{S}$ be the curvature of $\nabla^{S}$ on the spinor bundle. Let $\{e_{1},\cdots,e_{n}\}$ be a local orthonormal frame of the tangent bundle and $\{e^{1},\cdots,e^{n}\}$ be its dual frame. We have
\begin{equation}\label{CurvatureOnSpinorBundle}
R^{S}_{XY}\sigma=\frac{1}{4}R(X,Y,e_{i},e_{j})e_{i}e_{j}\cdot\sigma,
\end{equation}
for any spinor $\sigma$. If there exists a Killing spinor $\sigma$ with Killing constant $\mu$, the Ricci curvature tensor satisfies
\begin{equation}\label{RicciCurvatureWithKillingSpinor}
R_{ij}=4\mu^{2}(n-1)g_{ij},
\end{equation}
(see, e.g. \cite{Fri00}). As in \cite{DWW05}, we define a linear map $\Phi: S^{2}(M)\rightarrow \mathcal{S}\otimes T^{*}M$ as
\begin{equation}\label{DefinitionOfPhi}
\Phi(h)=h_{ij}e_{i}\cdot\sigma\otimes e^{j}.
\end{equation}

\begin{prop}[\cite{DWW05}, \cite{Wan91}]\label{BochnerTypeForumula}
Let $D$ be the twisted Dirac operator acting on $\mathcal{S}\otimes T^{*}M$, and $h$ be a symmetric 2-tensor on $M$. Then
\begin{equation}\label{Bochner}
\begin{aligned}
D^{*}D\Phi(h)= &\Phi((\nabla^{*}\nabla-2\mathring{R})h)+n(n-2)\mu^{2}\Phi(h)+2\mu D\Phi(h)\\
&+4\mu^{2}(trh)e_{j}\cdot\sigma\otimes e^{j}-4\mu(\delta h)_{j}\cdot\sigma\otimes e^{j}.
\end{aligned}
\end{equation}
\end{prop}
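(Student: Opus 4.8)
The plan is to compute $D^*D\Phi(h)$ directly via the Weitzenböck (Lichnerowicz) formula for the twisted Dirac operator $D$ on $\mathcal{S}\otimes T^*M$, namely $D^2 = (\nabla^{\mathcal{S}\otimes T^*M})^*\nabla^{\mathcal{S}\otimes T^*M} + \mathcal{R}$, where the curvature term $\mathcal{R}$ decomposes into the scalar-curvature piece (from the Lichnerowicz formula on $\mathcal{S}$) plus a piece built from the Clifford action of the curvature of $T^*M$. Since $D$ is formally self-adjoint, $D^*D = D^2$, so everything reduces to carefully evaluating each term on the specific section $\Phi(h) = h_{ij}\, e_i\cdot\sigma\otimes e^j$ and re-expressing the result in terms of $\Phi$ applied to natural operators on $h$.

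First I would differentiate $\Phi(h)$ using the Killing equation $\nabla^{S}_X\sigma = \mu X\cdot\sigma$: one gets $\nabla_X(\Phi(h)) = (\nabla_X h)_{ij}\, e_i\cdot\sigma\otimes e^j + \mu\, h_{ij}\,(X\cdot e_i\cdot\sigma)\otimes e^j$, so the connection on $\mathcal{S}\otimes T^*M$ differs from $\Phi$ composed with $\nabla$ on $h$ by an explicit zeroth-order term involving $\mu$ and Clifford multiplication. Iterating this to compute $\nabla^*\nabla$ of $\Phi(h)$ produces $\Phi(\nabla^*\nabla h)$ plus cross terms linear in $\mu$ (which, after tracing over the frame, will assemble into $D\Phi(h)$, $(\delta h)$-terms, etc.) plus terms quadratic in $\mu$ (which will contribute the $n(n-2)\mu^2$ and $4\mu^2(\mathrm{tr}\,h)$ pieces). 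The key Clifford algebra identities I will need are $e_i\cdot e_j + e_j\cdot e_i = -2\delta_{ij}$ and the standard contraction formulas $\sum_i e_i\cdot e_i = -n$, $\sum_{i}e_i\cdot X\cdot e_i = (n-2)X$ for a vector $X$, together with the symmetry $h_{ij}=h_{ji}$. For the curvature term $\mathcal{R}$, the $\mathcal{S}$-part contributes $\frac{R}{4}\Phi(h) = n(n-1)\mu^2\Phi(h)$ by \eqref{CurvatureOnSpinorBundle} and $R = 4n(n-1)\mu^2$, while the mixed curvature term, using \eqref{CurvatureOnSpinorBundle}, \eqref{RicciCurvatureWithKillingSpinor}, and the first Bianchi identity, is exactly where the $-2\mathring{R}h$ term and the remaining curvature contributions get produced; this is the same bookkeeping carried out in \cite{DWW05} for parallel spinors, now with $\mu\neq 0$.

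The main obstacle is organizing the $\mu$-linear and $\mu^2$ terms so that they collapse into precisely the combination $n(n-2)\mu^2\Phi(h) + 2\mu D\Phi(h) + 4\mu^2(\mathrm{tr}\,h)e_j\cdot\sigma\otimes e^j - 4\mu(\delta h)_j\cdot\sigma\otimes e^j$ with no leftover terms — in particular, showing that the genuinely third-order-looking cross terms between $\nabla h$ and the Clifford factor reorganize (using $h$'s symmetry and the contraction identities above) into the first-order operator $D$ applied to $\Phi(h)$ and into $\delta h$. Once the algebra is in hand, I would match the two computations of $D^2\Phi(h)$ — the Weitzenböck side and the direct side — and solve for $\Phi((\nabla^*\nabla - 2\mathring{R})h)$, which yields \eqref{Bochner}. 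Throughout, the computation is local so I may freely work in a normal orthonormal frame with $\nabla e_i = 0$ at the point, which eliminates the connection coefficients and makes the bracket terms in \eqref{DefOfCurvature} vanish at the center, simplifying the bookkeeping considerably.
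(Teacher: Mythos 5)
Your strategy is viable and, once one sign issue is repaired, it reproduces \eqref{Bochner}; it is essentially the same local Clifford-algebra computation as the paper's, organized differently. The paper expands $D^{*}D\Phi(h)=e_{k}\cdot\nabla_{e_{k}}\bigl(e_{l}\cdot\nabla_{e_{l}}\Phi(h)\bigr)$ from scratch in a normal frame, removing all spinor derivatives immediately via the Killing equation, so the curvature enters through the commutator of covariant derivatives acting on the $2$-tensor $h$ (the terms evaluated in \eqref{TheFirstCurvatureTerm} and \eqref{TheSecondCurvatureTerm}). You instead quote the twisted Lichnerowicz formula $D^{2}=\nabla^{*}\nabla+\tfrac{R}{4}+\mathcal{R}^{T^{*}M}$ and compute the connection Laplacian of $\Phi(h)$, so the curvature enters through $\tfrac{R}{4}=n(n-1)\mu^{2}$ and the Clifford action of the curvature of $T^{*}M$, while only the rough Laplacian of $h$ survives from the derivative terms. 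Both bookkeepings close: your $\mu^{2}$-coefficient is $n+n(n-1)-4-2(n-2)=n(n-2)$, matching the paper's $(n-2)^{2}-4+4(n-1)-2(n-2)$. This is a legitimate (mild) variant, arguably a bit cleaner since the Weitzenb\"ock remainder is cited rather than rederived.

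The one concrete error to fix is the order of Clifford factors in your first-derivative formula. At the center of a normal frame, $\nabla^{S}_{X}(e_{i}\cdot\sigma)=e_{i}\cdot\nabla^{S}_{X}\sigma=\mu\,e_{i}\cdot X\cdot\sigma$, so the zeroth-order correction is $\mu\,h_{ij}\,(e_{i}\cdot X\cdot\sigma)\otimes e^{j}$, not $\mu\,h_{ij}\,(X\cdot e_{i}\cdot\sigma)\otimes e^{j}$ as written; the two are not interchangeable since Clifford factors anticommute only up to $-2\delta$. This matters precisely in the terms you identify as the delicate ones: with the correct order the cross term in $\nabla^{*}\nabla\Phi(h)$ is $-2\mu(\nabla_{e_{k}}h)_{ij}\,e_{i}\cdot e_{k}\cdot\sigma\otimes e^{j}$, and one anticommutation yields $-4\mu(\delta h)_{j}\cdot\sigma\otimes e^{j}+2\mu\,e_{k}\cdot\Phi(\nabla_{e_{k}}h)$ with $e_{k}\cdot\Phi(\nabla_{e_{k}}h)=D\Phi(h)-(n-2)\mu\Phi(h)$, exactly as in \eqref{DivergenceTerm} and \eqref{DivergenceTerm2}. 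With your order the cross term is instead $-2\mu\,e_{k}\cdot\Phi(\nabla_{e_{k}}h)=-2\mu D\Phi(h)+2(n-2)\mu^{2}\Phi(h)$: the sign of the $2\mu D\Phi(h)$ term flips and the $-4\mu(\delta h)_{j}\cdot\sigma\otimes e^{j}$ term is lost entirely, so the computation would not close to \eqref{Bochner}. Everything else in the outline is correct.
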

\begin{proof}
Fix a point $x\in M$, choose a local orthonormal frame $\{e_{1},\cdots,e_{n}\}$ around $x$ such that $\nabla e_{i}=0$ at x. Then, at x,
\begin{equation}\label{DDPhi(h)}
\begin{aligned}
D^{*}D\Phi(h)
&=\nabla_{e_{k}}\nabla_{e_{l}}h_{ij}e_{k}e_{l}e_{i}\cdot\sigma\otimes e^{j}
  +\nabla_{e_{l}}h_{ij}e_{k}e_{l}e_{i}\cdot\nabla^{S}_{e_{k}}\sigma\otimes e^{j}\\
&\ \ \ +\nabla_{e_{k}}h_{ij}e_{k}e_{l}e_{i}\cdot\nabla^{S}_{e_{l}}\sigma\otimes e^{j}
  +h_{ij}e_{k}e_{l}e_{i}\cdot\nabla^{S}_{e_{k}}\nabla^{S}_{e_{l}}\sigma\otimes e^{j}\\
&=\nabla_{e_{k}}\nabla_{e_{l}}h_{ij}e_{k}e_{l}e_{i}\cdot\sigma\otimes e^{j}
  +\nabla_{e_{l}}h_{ij}(e_{k}e_{l}+e_{l}e_{k})e_{i}\cdot\nabla^{S}_{e_{k}}\sigma\otimes e^{j}\\
&\ \ \ +h_{ij}e_{k}e_{l}e_{i}\cdot\nabla^{S}_{e_{k}}\nabla^{S}_{e_{l}}\sigma\otimes e^{j}\\
&=\nabla_{e_{k}}\nabla_{e_{l}}h_{ij}e_{k}e_{l}e_{i}\cdot\sigma\otimes e^{j}
  -2\mu\nabla_{e_{k}}h_{ij}e_{i}e_{k}\cdot\sigma\otimes e^{j}\\
&\ \ \ +\mu^{2}h_{ij}e_{k}e_{l}e_{i}e_{l}e_{k}\cdot\sigma\otimes e^{j}\\
&=-\nabla_{e_{k}}\nabla_{e_{k}}h_{ij}e_{i}\cdot\sigma\otimes e^{j}
  -\frac{1}{2}R_{e_{k}e_{l}}h_{ij}e_{k}e_{l}e_{i}\cdot\sigma\otimes e^{j}\\
&\ \ \ -2\mu\nabla_{e_{k}}h_{ij}e_{i}e_{k}\cdot\sigma\otimes e^{j}
   +(n-2)^{2}\mu^{2}h_{ij}e_{i}\cdot\sigma\otimes e^{j}\\
&=\Phi(\nabla^{*}\nabla h)+\frac{1}{2}R_{kljp}h_{ip}e_{k}e_{l}e_{i}\cdot\sigma\otimes e^{j}
   +\frac{1}{2}R_{klip}h_{pj}e_{k}e_{l}e_{i}\cdot\sigma\otimes e^{j}\\
&\ \ \ -2\mu\nabla_{e_{k}}h_{ij}e_{i}e_{k}\cdot\sigma\otimes e^{j}
   +(n-2)^{2}\mu^{2}\Phi(h).
\end{aligned}
\end{equation}
In the third equality, we use the Clifford relation $e_{k}e_{l}+e_{l}e_{k}=-2\delta_{kl}$, and $\nabla^{S}_{X}\sigma=\mu X\cdot\sigma$ for any vector field $X$. In the fourth equality, we use twice the fact
$$e_{l}e_{i}e_{l}\cdot\phi=(n-2)e_{i}\cdot\phi$$
for any spinor $\phi$, which can easily be obtained by using the Clifford relation.

By using the Clifford relation, $(\ref{CurvatureOnSpinorBundle})$, and $(\ref{RicciCurvatureWithKillingSpinor})$, we have
\begin{equation}\label{TheFirstCurvatureTerm}
\frac{1}{2}R_{kljp}h_{ip}e_{k}e_{l}e_{i}\cdot\sigma\otimes e^{j}=\Phi(-2\mathring{R}h)-4\mu^{2}\Phi(h)+4\mu^{2}trh e_{j}\cdot\sigma\otimes e^{j},
\end{equation}
\begin{equation}\label{TheSecondCurvatureTerm}
\frac{1}{2}R_{klip}h_{pj}e_{k}e_{l}e_{i}\cdot\sigma\otimes e^{j}=4(n-1)\mu^{2}\Phi(h),
\end{equation}
\begin{equation}\label{DivergenceTerm}
-2\mu\nabla_{e_{k}}h_{ij}e_{i}e_{k}\cdot\sigma\otimes e^{j}=-4\mu(\delta h)_{j}\sigma\otimes e^{j}+2\mu e_{k}\cdot\Phi(\nabla_{e_{k}}h),
\end{equation}
\begin{equation}\label{DivergenceTerm2}
e_{k}\cdot\Phi(\nabla_{e_{k}}h)=D\Phi(h)-(n-2)\mu\Phi(h).
\end{equation}
By plugging $(\ref{TheFirstCurvatureTerm}), (\ref{TheSecondCurvatureTerm}), (\ref{DivergenceTerm})$ and $(\ref{DivergenceTerm2})$ into $(\ref{DDPhi(h)})$, we get $(\ref{Bochner})$.
\end{proof}


\section{Stability of Riemannian manifolds with imaginary Killing spinors}
\noindent In this section, we obtain an estimate for the Einstein operator on complete Riemannian manifolds with imaginary Killing spinors of type \uppercase\expandafter{\romannumeral1}. As a consequence of the estimate and Baum's classification results, we prove that all complete Riemannian manifolds with imaginary Killing spinors are strictly stable.

Let $(M^{n},g)$ be a Riemannian manifold with an imaginary Killing spinor $\sigma$ of type \uppercase\expandafter{\romannumeral1} with the Killing constant $\mu=\sqrt{-1}\nu$. We have the following property for the map $\Phi$ defined in $(\ref{DefinitionOfPhi})$.
\begin{lem}\label{ImaginaryInnerProductForPhi}
For all $h, \tilde{h}\in C^{\infty}(M, S^{2}(M))$, we have
\begin{equation}
Re\langle\Phi(h),\Phi(\tilde{h})\rangle=\langle h,\tilde{h}\rangle f,
\end{equation}
where $f=\langle\sigma,\sigma\rangle$ is the length function.
\end{lem}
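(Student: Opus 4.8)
The plan is to compute $Re\langle\Phi(h),\Phi(\tilde h)\rangle$ directly from the definition $\Phi(h)=h_{ij}\,e_i\cdot\sigma\otimes e^j$ and reduce it to a sum over the cross terms $Re\langle e_i\cdot\sigma, e_k\cdot\sigma\rangle$, which are controlled by the orthogonality relation $(\ref{OrthogonalityOfSpinors})$ from Baum's lemma. Expanding in a local orthonormal frame $\{e_1,\dots,e_n\}$ with dual frame $\{e^1,\dots,e^n\}$, and using that the $e^j$ are orthonormal in $T^*M$ so the tensor-factor pairing kills all terms except $j=l$, I get
\begin{equation*}
\langle\Phi(h),\Phi(\tilde h)\rangle=\sum_{i,k,j}h_{ij}\tilde h_{kj}\,\langle e_i\cdot\sigma,e_k\cdot\sigma\rangle_{\mathcal S}.
\end{equation*}
Taking real parts and invoking $(\ref{OrthogonalityOfSpinors})$, namely $Re\langle e_i\cdot\sigma,e_k\cdot\sigma\rangle=\delta_{ik}f$, collapses the $i,k$ sum to $\sum_{i,j}h_{ij}\tilde h_{ij}\,f=\langle h,\tilde h\rangle f$, which is exactly the claim. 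So the whole argument is essentially a bookkeeping exercise built on top of the already-stated relation $(\ref{OrthogonalityOfSpinors})$.

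The one point that deserves care, and which I expect to be the main (minor) obstacle, is the interchange of "real part" with the contraction: $\langle h,\tilde h\rangle$ is a real quantity since $h,\tilde h$ are real symmetric $2$-tensors, so $\sum h_{ij}\tilde h_{kj}Re\langle e_i\cdot\sigma,e_k\cdot\sigma\rangle = Re\bigl(\sum h_{ij}\tilde h_{kj}\langle e_i\cdot\sigma,e_k\cdot\sigma\rangle\bigr)$ is legitimate because the coefficients $h_{ij}\tilde h_{kj}$ are real. I would state this explicitly. I would also note that the inner product on $\mathcal S\otimes T^*M$ is the tensor product of the Hermitian (or Hermitian-type) inner product on $\mathcal S$ with the Riemannian inner product on $T^*M$, so that the $e^j\otimes e^l$ cross terms indeed vanish for $j\ne l$; this is the convention implicitly in use in Section 3, and with it the computation above is frame-independent and the result follows pointwise on $M$. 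No integration or analytic input is needed — the identity is purely algebraic at each point.
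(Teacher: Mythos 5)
Your proposal is correct and is essentially identical to the paper's own proof: both expand $\Phi(h)=h_{ij}e_i\cdot\sigma\otimes e^j$ in a local orthonormal frame, contract the $T^*M$ factor to force $j=l$, pull the real part past the real coefficients $h_{ij}\tilde h_{kj}$, and then apply the orthogonality relation $(\ref{OrthogonalityOfSpinors})$ to collapse the sum to $\langle h,\tilde h\rangle f$. The only difference is that you make explicit the (harmless) interchange of $Re$ with the real-coefficient contraction, which the paper performs silently.
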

\begin{proof}
\begin{align*}
Re\langle\Phi(h),\Phi(\tilde{h})\rangle &=Re(h_{ij}\tilde{h}_{kl}\langle e_{i}\cdot\sigma\otimes e^{j}, e_{k}\cdot\sigma\otimes e^{l}\rangle)\\
                                        &=Re(h_{ij}\tilde{h}_{kj}\langle e_{i}\cdot\sigma, e_{k}\cdot\sigma\rangle)\\
                                        &=h_{ij}\tilde{h}_{kj}Re\langle e_{i}\cdot\sigma, e_{k}\cdot\sigma\rangle)\\
                                        &=h_{ij}\tilde{h}_{ij}f.
\end{align*}
In the last step, we use $(\ref{OrthogonalityOfSpinors})$.
\end{proof}
\begin{lem}\label{NormalOftDirectionAction}
If $\sigma$ is a Killing spinor of type \uppercase\expandafter{\romannumeral1} as in $(\ref{KillingSpinor1})$ or $(\ref{KillingSpinor2})$, then we have
\begin{equation}
\|(\frac{\partial}{\partial t})\cdot\Phi(h)\|=\|\Phi(h)\|.
\end{equation}
\end{lem}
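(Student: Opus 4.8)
The plan is to reduce the statement to the single fact that Clifford multiplication by a unit tangent vector is a fiberwise isometry of the spinor bundle. In the warped-product model $(F^{n-1}\times\mathbb{R},\,e^{-4\nu t}h+dt^{2})$ of Theorem \ref{ClassificationOfType1}, the coordinate field $\frac{\partial}{\partial t}$ is defined and has unit length, because the warping factor $e^{-4\nu t}$ multiplies only the $F^{n-1}$-directions, so $g(\frac{\partial}{\partial t},\frac{\partial}{\partial t})=1$ everywhere. Combining the skew-symmetry of Clifford multiplication with respect to the Hermitian metric on $\mathcal{S}$, namely $\langle X\cdot\phi,\psi\rangle=-\langle\phi,X\cdot\psi\rangle$ for real tangent vectors $X$, with the Clifford relation $\frac{\partial}{\partial t}\cdot\frac{\partial}{\partial t}=-1$, one obtains for all spinors $\phi,\psi$ the identity $\langle\frac{\partial}{\partial t}\cdot\phi,\ \frac{\partial}{\partial t}\cdot\psi\rangle=-\langle\phi,\ \frac{\partial}{\partial t}\cdot\frac{\partial}{\partial t}\cdot\psi\rangle=\langle\phi,\psi\rangle$.

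Applying this to $\Phi(h)=h_{ij}\,e_{i}\cdot\sigma\otimes e^{j}$, and noting that $\frac{\partial}{\partial t}\cdot$ acts only on the spinor factor of $\mathcal{S}\otimes T^{*}M$ while fixing the $T^{*}M$-factor, I would compute
\begin{equation*}
\|\tfrac{\partial}{\partial t}\cdot\Phi(h)\|^{2}=h_{ij}h_{kl}\,g^{jl}\,\langle\tfrac{\partial}{\partial t}\cdot e_{i}\cdot\sigma,\ \tfrac{\partial}{\partial t}\cdot e_{k}\cdot\sigma\rangle=h_{ij}h_{kl}\,g^{jl}\,\langle e_{i}\cdot\sigma,\ e_{k}\cdot\sigma\rangle=\|\Phi(h)\|^{2},
\end{equation*}
the middle step being the displayed identity with $\phi=e_{i}\cdot\sigma$ and $\psi=e_{k}\cdot\sigma$, summed against the real coefficients $h_{ij}h_{kl}g^{jl}$. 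Taking square roots gives the lemma, in fact pointwise on $M$. If one prefers an explicit value, choosing the orthonormal frame so that $e_{n}=\frac{\partial}{\partial t}$ and using $(\ref{OrthogonalityOfSpinors})$ shows both sides equal $\langle h,h\rangle f$, consistently with Lemma \ref{ImaginaryInnerProductForPhi}.

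The argument is pure pointwise linear algebra, so I expect no real obstacle; the only place the hypotheses enter is in guaranteeing that $\frac{\partial}{\partial t}$ is well defined and of unit length, which is immediate from the warped-product metric. As a more concrete alternative one can instead invoke $(\ref{tDirectionActionOnKillingSpinor})$, $\frac{\partial}{\partial t}\cdot\sigma=\sqrt{-1}\,\sigma$: completing $\frac{\partial}{\partial t}$ to an adapted orthonormal frame $\{e_{1},\dots,e_{n-1},e_{n}=\frac{\partial}{\partial t}\}$, the Clifford relations give $\frac{\partial}{\partial t}\cdot e_{i}\cdot\sigma=-\sqrt{-1}\,e_{i}\cdot\sigma$ for $i<n$ and $\frac{\partial}{\partial t}\cdot e_{n}\cdot\sigma=\sqrt{-1}\,e_{n}\cdot\sigma$, while the same two facts force $\langle e_{i}\cdot\sigma,e_{n}\cdot\sigma\rangle=0$ for $i<n$; hence $\frac{\partial}{\partial t}\cdot$ rescales each $e_{i}\cdot\sigma$ by a unit-modulus scalar and introduces no new cross terms, so $\|\Phi(h)\|$ is unchanged. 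I would present the first argument as the main proof and relegate the second to a remark, if at all.
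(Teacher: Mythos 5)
Your proposal is correct, and your main argument is a genuinely different (and more economical) route than the paper's. The paper proves the lemma by brute force in an adapted frame $\{e_{1}=\frac{\partial}{\partial t},e_{2},\dots,e_{n}\}$: it uses the type \uppercase\expandafter{\romannumeral1} identity $(\ref{tDirectionActionOnKillingSpinor})$ to show that $\frac{\partial}{\partial t}\cdot{}$ multiplies the $i=1$ block of $\Phi(h)$ by $\sqrt{-1}$ and the $i\geq2$ blocks by $-\sqrt{-1}$, and then invokes the orthogonality relation $(\ref{OrthogonalityOfSpinors})$ to conclude that the real part of the norm squared is unchanged --- this is exactly the alternative you relegate to your final paragraph (and your claim there that the cross terms $\langle e_{i}\cdot\sigma,e_{n}\cdot\sigma\rangle$ vanish does hold, though only the vanishing of the real part, which is $(\ref{OrthogonalityOfSpinors})$, is actually needed). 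Your primary argument instead observes that Clifford multiplication by the unit vector $\frac{\partial}{\partial t}$ is a fiberwise isometry of $\mathcal{S}$, by skew-adjointness of Clifford multiplication together with $\frac{\partial}{\partial t}\cdot\frac{\partial}{\partial t}=-1$, and that it acts only on the spinor factor of $\mathcal{S}\otimes T^{*}M$. This is cleaner and strictly more general: it never uses $(\ref{tDirectionActionOnKillingSpinor})$ or the type \uppercase\expandafter{\romannumeral1} hypothesis, only that $\frac{\partial}{\partial t}$ has unit length in the warped-product metric, so the same identity holds with $\frac{\partial}{\partial t}$ replaced by any unit vector field. The only input you should cite explicitly is the compatibility of the Hermitian metric on the spinor bundle with Clifford multiplication ($\langle X\cdot\phi,\psi\rangle=-\langle\phi,X\cdot\psi\rangle$ for real $X$), which is standard and consistent with the paper's own formula $(\ref{OrthogonalityOfSpinors})$; with that in hand your computation is complete and pointwise, as you say.
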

\begin{proof}
Choose a local orthonormal frame of TM as $\{e_{1}=\frac{\partial}{\partial r},e_{2},\cdots,e_{n}\}$. Then by $(\ref{tDirectionActionOnKillingSpinor})$, we have
\begin{align*}
(\frac{\partial}{\partial t})\cdot\Phi(h)
&=(\frac{\partial}{\partial t})\cdot(h_{1j}(\frac{\partial}{\partial t})\cdot\sigma\otimes e^{j}+\sum_{i\geq2}h_{ij}e_{i}\cdot\sigma\otimes e^{j})\\
&=\sqrt{-1}h_{1j}(\frac{\partial}{\partial t})\cdot\sigma\otimes e^{j}-\sqrt{-1}\sum_{i\geq2}h_{ij}e_{i}\cdot\sigma\otimes e^{j}.
\end{align*}
Then by $(\ref{OrthogonalityOfSpinors})$, we have
\begin{align*}
\|(\frac{\partial}{\partial t})\cdot\Phi(h)\|^{2}
&=Re\langle (\frac{\partial}{\partial t})\cdot\Phi(h),(\frac{\partial}{\partial t})\cdot\Phi(h)\rangle\\
&=Re\langle \sqrt{-1}h_{1j}(\frac{\partial}{\partial t})\cdot\sigma\otimes e^{j}-\sqrt{-1}\sum_{i\geq2}h_{ij}e_{i}\cdot\sigma\otimes e^{j},\\
&\ \ \ \sqrt{-1}h_{1l}(\frac{\partial}{\partial t})\cdot\sigma\otimes e^{l}-\sqrt{-1}\sum_{k\geq2}h_{kl}e_{k}\cdot\sigma\otimes e^{l}\rangle\\
&=h_{ij}h_{ij}f\\
&=\|\Phi(h)\|^{2}.
\end{align*}
\end{proof}

\begin{thm}\label{ImaginaryKillingEstimate}
Let $(M^{n},g)$ be a complete Riemannian manifold with an imaginary Killing spinor $\sigma$ of type \uppercase\expandafter{\romannumeral1} with Killing constant $\mu=\sqrt{-1}\nu$. Then we have
\begin{equation}
\int_{M}\langle(\nabla^{*}\nabla-2\mathring{R})h, h\rangle dvol_{g}\geq[n(n-2)-4]\nu^{2}\int_{M}\langle h,h\rangle dvol_{g},
\end{equation}
for all compactly supported traceless transverse $h\in C^{\infty}_{0}(M, S^{2}(M))$.
\end{thm}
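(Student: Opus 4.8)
The plan is to combine the Bochner type formula of Proposition~\ref{BochnerTypeForumula} with the two algebraic identities established in Lemmas~\ref{ImaginaryInnerProductForPhi} and~\ref{NormalOftDirectionAction}, applied to a compactly supported traceless transverse tensor $h$. Since $tr_g h = 0$ and $\delta_g h = 0$, the last two terms on the right side of \eqref{Bochner} vanish, so with $\mu = \sqrt{-1}\nu$ (hence $\mu^2 = -\nu^2$) the formula reduces to
\begin{equation*}
D^{*}D\Phi(h) = \Phi((\nabla^{*}\nabla-2\mathring{R})h) - n(n-2)\nu^{2}\Phi(h) + 2\sqrt{-1}\nu\, D\Phi(h).
\end{equation*}
First I would pair this identity with $\Phi(h)$ using the real part of the Hermitian inner product on $\mathcal{S}\otimes T^{*}M$, then integrate over $M$. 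By Lemma~\ref{ImaginaryInnerProductForPhi}, $Re\langle \Phi((\nabla^{*}\nabla-2\mathring{R})h), \Phi(h)\rangle = \langle(\nabla^{*}\nabla-2\mathring{R})h, h\rangle f$ and $Re\langle\Phi(h),\Phi(h)\rangle = \langle h,h\rangle f$, which introduces the length function $f = e^{-2\nu t}$ as a weight. The left side integrates, after integration by parts (legitimate because $h$ — hence $\Phi(h)$ — is compactly supported), to $\int_M \|D\Phi(h)\|^2\, dvol_g \geq 0$.

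The remaining issue is the cross term $\int_M 2\nu\, Re\langle \sqrt{-1}\,D\Phi(h), \Phi(h)\rangle\, dvol_g$, which is where the type~\uppercase\expandafter{\romannumeral1} hypothesis and Lemma~\ref{NormalOftDirectionAction} enter. The key observation is that the Dirac operator can be split off the $\partial/\partial t$ direction: writing $D\Phi(h) = \frac{\partial}{\partial t}\cdot\nabla^{S\otimes T^*M}_{\partial/\partial t}\Phi(h) + (\text{tangential part})$, one uses \eqref{tDirectionActionOnKillingSpinor} to compute $\nabla_{\partial/\partial t}\Phi(h)$ in terms of $\sqrt{-1}\Phi(h)$ plus derivatives of $h$, and Lemma~\ref{NormalOftDirectionAction} to control the norm. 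Integrating the cross term by parts in the $t$-variable and exploiting that $f' = -2\nu f$ should convert it into a manifestly signed quantity plus a multiple of $\int_M \|\Phi(h)\|^2 f\, dvol_g$; tracking the constant should produce exactly the $-4\nu^2$ improvement over the naive bound $-n(n-2)\nu^2$. Assembling all pieces: $0 \leq \int_M \|D\Phi(h)\|^2 = \int_M \big(\langle(\nabla^*\nabla - 2\mathring{R})h,h\rangle - n(n-2)\nu^2\langle h,h\rangle + (\text{cross term})\big)f\, dvol_g$, and after the cross term is estimated one obtains
\begin{equation*}
\int_M \langle(\nabla^*\nabla - 2\mathring{R})h, h\rangle f\, dvol_g \geq [n(n-2) - 4]\nu^2 \int_M \langle h,h\rangle f\, dvol_g.
\end{equation*}

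**The hard part will be** removing the weight $f$ to pass from the weighted inequality to the unweighted statement of the theorem. Since $f = e^{-2\nu t}$ is non-constant, one cannot simply divide it out. The natural device is to replace $h$ by $f^{s} h$ (or a cutoff-localized version) for a suitable exponent $s$ — in the spirit of Kr\"{o}ncke's treatment of the non-constant length function issue — and optimize, or to run the whole argument in a weighted $L^2$ space and then translate back using that on a warped product $F^{n-1}\times\R$ with metric $e^{-4\nu t}h + dt^2$ one has precise control of how norms and the operators $\nabla^*\nabla$, $\mathring{R}$ transform under the conformal-type factor. I expect the cleanest route is to observe that the weighted estimate already suffices: a compactly supported $h$ can be tested, and because the weight $f$ is bounded above and below on the (compact) support of $h$, one gets the unweighted inequality with the \emph{same} constant by a density/approximation argument together with a scaling in $t$ that pushes the support to a region where $f$ is nearly constant — the constant $[n(n-2)-4]\nu^2$ being scale-invariant under $t$-translation, it survives the limit. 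Making this last reduction rigorous, rather than the Bochner computation itself, is the genuine obstacle.
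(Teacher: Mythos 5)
There is a genuine gap, and you have correctly located it yourself: the passage from the weighted inequality $\int_M \langle(\nabla^*\nabla-2\mathring{R})h,h\rangle f\, dvol_g \geq c\int_M\langle h,h\rangle f\, dvol_g$ to the unweighted statement does not follow by any of the devices you suggest. For a \emph{fixed} compactly supported $h$ the support is fixed, so you cannot shrink the oscillation of $f=e^{-2\nu t}$ on it; $t$-translation is not an isometry of the warped product $(F\times\mathbb{R}, e^{-4\nu t}h_F+dt^2)$ unless $F$ admits homotheties, so no rescaling trick is available; and since the integrand $\langle(\nabla^*\nabla-2\mathring{R})h,h\rangle$ has no pointwise sign, bounding $\int A f$ from below tells you nothing about $\int A$ even when $f$ is pinched between positive constants on the support. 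Replacing $h$ by $f^sh$ destroys transversality ($\delta(f^sh)=-sf^{s-1}h(\nabla f,\cdot)\neq 0$), which reactivates the $\delta h$ terms in the Bochner formula. A smaller confusion: in your unweighted pairing the cross term $2\nu\int Re\langle\sqrt{-1}D\Phi(h),\Phi(h)\rangle$ actually vanishes identically, since $\int\langle D\Psi,\Psi\rangle$ is real for the formally self-adjoint $D$; the difficulty is never that term but solely the weight.

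The paper's proof avoids the weight from the outset: by Lemma \ref{ImaginaryInnerProductForPhi} one has the pointwise identity $\langle(\nabla^*\nabla-2\mathring{R})h,h\rangle = f^{-1}Re\langle\Phi((\nabla^*\nabla-2\mathring{R})h),\Phi(h)\rangle$, so one integrates the Bochner identity against $f^{-1}\Phi(h)$ rather than against $\Phi(h)$. Every term then lands directly in the unweighted quadratic form, with $-n(n-2)\mu^2\|\Phi(h)\|^2/f = +n(n-2)\nu^2\langle h,h\rangle$. The price is that integrating $D^*D\Phi(h)$ by parts against $f^{-1}\Phi(h)$ produces, via $\nabla(f^{-1})=2\nu f^{-1}\partial_t$, the extra term $\int_M f^{-1}Re\langle D\Phi(h),2\nu(\partial/\partial t)\cdot\Phi(h)\rangle\,dvol_g$; this, together with the now non-vanishing term $\int_M f^{-1}Re\langle -2\mu D\Phi(h),\Phi(h)\rangle\,dvol_g$, is controlled by Cauchy--Schwarz and Lemma \ref{NormalOftDirectionAction}, each absorbing half of $\int f^{-1}\|D\Phi(h)\|^2$ and costing $2\nu^2\int\langle h,h\rangle$ --- whence the $-4\nu^2$ correction. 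So your computational ingredients are the right ones, but the key idea you are missing is to test against $f^{-1}\Phi(h)$; without it the argument does not close.
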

\begin{proof}
By Proposition \ref{BochnerTypeForumula}, for any compactly supported traceless transverse symmetric 2-tensor $h$,
\begin{equation}\label{Phi(Deltah)}
\Phi((\nabla^{*}\nabla-2\mathring{R})h)=D^{*}D\Phi(h)-n(n-2)\mu^{2}\Phi(h)-2\mu D\Phi(h).
\end{equation}
By Theorem $\ref{ClassificationOfType1}$, we can take a Killing spinor as in $(\ref{KillingSpinor1})$ or $(\ref{KillingSpinor2})$ depending on dimension $n$ of the manifold. Then we know the length function is given by
\begin{equation}\label{lengthfunction}
f=e^{-2\nu t}.
\end{equation}
By $(\ref{Phi(Deltah)})$, and Lemma \ref{ImaginaryInnerProductForPhi}, for all traceless transverse $h\in C^{\infty}_{0}(S^{2}(M))$, we have
\begin{equation}\label{MainEstimate}
\begin{aligned}
\int_{M}\langle(\nabla^{*}\nabla-2\mathring{R})h, h\rangle dvol_{g}
&=\int_{M}\frac{Re\langle\Phi((\nabla^{*}\nabla-2\mathring{R})h),\Phi(h)\rangle}{f} dvol_{g}\\
&=\int_{M}\frac{Re\langle D^{*}D\Phi(h),\Phi(h)\rangle}{f} dvol_{g}\\
&\ \ \ -n(n-2)\mu^{2}\int_{M}\frac{\langle\Phi(h),\Phi(h)\rangle}{f} dvol_{g}\\
&\ \ \ +\int_{M}\frac{Re\langle-2\mu D\Phi(h),\Phi(h)\rangle}{f} dvol_{g}
\end{aligned}
\end{equation}
By using $(\ref{lengthfunction})$ and doing an integration by parts, we obtain
\begin{equation*}
\begin{aligned}
\int_{M}\frac{Re\langle D^{*}D\Phi(h),\Phi(h)\rangle}{f} dvol_{g}
&=\int_{M}\frac{\|D\Phi(h)\|^{2}}{f}dvol_{g}\\
&\ \ \ +\int_{M}\frac{Re\langle D\Phi(h),2\nu(\frac{\partial}{\partial t})\cdot\Phi(h)\rangle}{f}dvol_{g}.
\end{aligned}
\end{equation*}
By Cauchy inequality, we have
\begin{align*}
Re\langle D\Phi(h),2\nu(\frac{\partial}{\partial t})\cdot\Phi(h)\rangle
&\geq -\|D\Phi(h)\|\cdot\|2\nu(\frac{\partial}{\partial t})\cdot\Phi(h)\|\\
&\geq -\frac{\|D\Phi(h)\|^{2}+4\nu^{2}\|(\frac{\partial}{\partial t})\cdot\Phi(h)\|^{2}}{2}\\
&=-\frac{\|D\Phi(h)\|^{2}+4\nu^{2}\|\Phi(h)\|^{2}}{2}
\end{align*}
Thus we have
\begin{equation}\label{EstimateForFirstTerm}
\begin{aligned}
\int_{M}\frac{Re\langle D^{*}D\Phi(h),\Phi(h)\rangle}{f}dvol_{g}
&\geq\frac{1}{2}\int_{M}\frac{\|D\Phi(h)\|^{2}}{f}dvol_{g}\\
&\ \ \ -2\nu^{2}\int_{M}\langle h,h\rangle dvol_{g}.
\end{aligned}
\end{equation}
Similarly, by Cauchy inequality, we have
\begin{equation}\label{EstimateForThirdTerm}
\begin{aligned}
\int_{M}\frac{Re\langle -2\mu D\Phi(h),\Phi(h)\rangle}{f}dvol_{g}
&\geq-\frac{1}{2}\int_{M}\frac{\|D\Phi(h)\|^{2}}{f}dvol_{g}\\
&\ \ \ -2\nu^{2}\int_{M}\langle h,h\rangle dvol_{g}.
\end{aligned}
\end{equation}
By plugging $(\ref{EstimateForFirstTerm})$ and $(\ref{EstimateForThirdTerm})$ into $(\ref{MainEstimate})$, we complete the proof.
\end{proof}

Then Theorem $\ref{ImaginaryKillingEstimate}$ enables us to prove the following stability result recently obtained in \cite{Kro15} in a differential way.
\begin{cor}
Complete Riemannian manifolds with non-zero imaginary Killing spinors are strictly stable.
\end{cor}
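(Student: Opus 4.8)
The plan is to combine the estimate of Theorem~\ref{ImaginaryKillingEstimate} with Baum's dichotomy (Theorems~\ref{ClassificationOfType2} and~\ref{ClassificationOfType1}) that splits complete manifolds with imaginary Killing spinors into type~I and type~II. First I would dispose of the type~II case: by Theorem~\ref{ClassificationOfType2}, such a manifold is isometric to the hyperbolic space $H^{n}_{-4\nu^{2}}$, and on hyperbolic space the strict stability inequality $\langle\nabla^{*}\nabla h-2\mathring{R}h,h\rangle_{L^{2}}\geq 4(n-2)\nu^{2}\langle h,h\rangle_{L^{2}}$ for compactly supported traceless transverse $h$ is already available (e.g.\ from the first inequality in 12.70 of \cite{Bes87}, as noted in the introduction). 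Since $n\geq 2$ and $\nu\neq 0$, the constant $4(n-2)\nu^{2}$ is positive, so type~II manifolds are strictly stable.

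Next I would handle the type~I case using Theorem~\ref{ImaginaryKillingEstimate}, which gives
\[
\int_{M}\langle(\nabla^{*}\nabla-2\mathring{R})h,h\rangle\,dvol_{g}\geq [n(n-2)-4]\nu^{2}\int_{M}\langle h,h\rangle\,dvol_{g}
\]
for all compactly supported traceless transverse $h$. To conclude strict stability I need the coefficient $[n(n-2)-4]\nu^{2}$ to be strictly positive. Since $\nu\neq 0$ for a genuine imaginary Killing spinor, this reduces to checking $n(n-2)-4>0$. A manifold carrying an imaginary Killing spinor of type~I is, by Theorem~\ref{ClassificationOfType1}, a warped product $F^{n-1}\times\mathbb{R}$ with $F^{n-1}$ carrying a non-zero parallel spinor; in particular $n-1\geq 1$, so $n\geq 2$, and one checks the small cases directly. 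For $n=2$ one has $n(n-2)-4=-4$ and for $n=3$ one has $n(n-2)-4=-1$, so the naive bound fails there; for $n\geq 4$ one has $n(n-2)-4\geq 4>0$ and the estimate immediately yields strict stability. Thus the main obstacle is the low-dimensional cases $n=2,3$.

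For $n=2$ and $n=3$ I would argue that the type~I situation cannot occur (or is vacuous for stability purposes): a complete Riemannian manifold carrying a non-zero parallel spinor in dimension $n-1=1$ or $n-1=2$ has special holonomy forcing it to be flat (a line, or a flat surface such as $\mathbb{R}^{2}$ or a flat cylinder/torus with trivial spinor holonomy), so the warped product $F^{n-1}\times\mathbb{R}$ with metric $e^{-4\nu t}h+dt^{2}$ is, up to isometry, the hyperbolic plane $H^{2}_{-4\nu^{2}}$ when $n=2$ and a quotient-free hyperbolic-type space when $n=3$; in fact in these dimensions every imaginary Killing spinor is of type~II (equivalently, one can verify $q_{\sigma}>0$ is forced, or appeal directly to Baum's classification which in low dimensions collapses the type~I family into hyperbolic space). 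Hence the low-dimensional type~I cases are already covered by the hyperbolic-space estimate above. Combining: in every dimension and for either type, the Einstein operator restricted to compactly supported traceless transverse tensors is bounded below by a positive multiple of the $L^{2}$-norm, which is precisely strict stability. The cleanest write-up would state the type~II/hyperbolic bound, then note that for $n\geq 4$ Theorem~\ref{ImaginaryKillingEstimate} finishes the type~I case, and finally observe that for $n\leq 3$ there are no type~I examples beyond hyperbolic space, so no separate argument is needed.
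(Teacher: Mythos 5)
Your argument is correct and follows essentially the same route as the paper: type II manifolds are hyperbolic and hence strictly stable by Koiso's negative-curvature result, type I manifolds with $n\geq 4$ are handled by Theorem \ref{ImaginaryKillingEstimate}, and for $n\leq 3$ the warped-product structure of Theorem \ref{ClassificationOfType1} forces constant negative sectional curvature, reducing again to the hyperbolic case. (One small inaccuracy: it is not that every low-dimensional imaginary Killing spinor is of type II --- the type is a property of the spinor, and hyperbolic space carries both types --- but your fallback claim, that the underlying manifold has constant negative curvature and is therefore strictly stable, is exactly the paper's argument.)
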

\begin{proof}
By Theorem $\ref{ClassificationOfType2}$, complete Riemannian manifolds with Killing spinors of type \uppercase\expandafter{\romannumeral2} are isometric to hyperbolic spaces, and therefore are strictly stable (see \cite{Koi79}, and the proof of Theorem 12.67 in \cite{Bes87}). Let $(M^{n},g)$ be a Riemannian manifold with Killing spinors of type \uppercase\expandafter{\romannumeral1}. If $n\geq4$, then by Theorem $\ref{ImaginaryKillingEstimate}$, $(M^{n},g)$ is strictly stable. If $n\leq3$, we know it has negative constant sectional curvature, and therefore it is also strictly stable.
\end{proof}


\section{Stability of Riemannian manifolds with real Killing spinors}
\noindent In this section, we give a stability condition for manifolds with real Killing spinors in terms of a twisted Dirac operator. Because the length function of a real Killing spinor is constant, an estimate for the Einstein operator can be obtained easier than the case of imaginary Killing spinor. However, unlike imaginary Killing spinor case, from the estimate we cannot conclude a general stability result for manifolds with real Killing spniors.

Let $(M^{n},g)$ be a Riemannian manifold with a real Killing spinor $\sigma$ with Killing constant $\mu$. Without loss of generality, we can choose $\sigma$ to be of unit length.

\begin{lem}\label{RealInnerProductForPhi}
For all $h,\tilde{h}\in C^{\infty}(M, S^{2}(M))$, we have
$$Re\langle\Phi(h),\Phi(\tilde{h})\rangle=\langle h,\tilde{h}\rangle.$$
\end{lem}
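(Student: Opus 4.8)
The plan is to mimic the proof of Lemma \ref{ImaginaryInnerProductForPhi}, the only difference being that the length function is now the constant $1$. First I would expand the left-hand side using the definition $\Phi(h)=h_{ij}e_i\cdot\sigma\otimes e^j$ and the natural Hermitian inner product on $\mathcal{S}\otimes T^*M$. Since $\{e^1,\dots,e^n\}$ is the dual of an orthonormal frame, the $T^*M$-factor contributes $\langle e^j,e^l\rangle=\delta_{jl}$, so after summing over the cotangent index the expression collapses to
\begin{equation*}
Re\langle\Phi(h),\Phi(\tilde h)\rangle=h_{ij}\tilde h_{kj}\,Re\langle e_i\cdot\sigma,e_k\cdot\sigma\rangle.
\end{equation*}

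The one point that genuinely needs an argument is the pointwise identity $Re\langle e_i\cdot\sigma,e_k\cdot\sigma\rangle=\delta_{ik}$, i.e. the analogue of \eqref{OrthogonalityOfSpinors} for a unit-length real Killing spinor (the version in Section 2 is stated with the length function $f$ of an imaginary spinor). This follows from a general fact independent of the Killing equation: Clifford multiplication by a unit vector is skew-adjoint with respect to the Hermitian inner product on $\mathcal{S}$, so $\langle e_i\cdot\sigma,e_k\cdot\sigma\rangle=-\langle\sigma,e_ie_k\cdot\sigma\rangle$. For $i=k$ the Clifford relation $e_ie_i=-1$ gives $\langle\sigma,\sigma\rangle=1$ since $\sigma$ has unit length; for $i\neq k$ the element $e_ie_k$ satisfies $(e_ie_k)^*=e_k^*e_i^*=e_ke_i=-e_ie_k$, hence it is skew-adjoint, so $\langle\sigma,e_ie_k\cdot\sigma\rangle$ is purely imaginary and its real part vanishes. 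Plugging $Re\langle e_i\cdot\sigma,e_k\cdot\sigma\rangle=\delta_{ik}$ into the displayed formula yields $h_{ij}\tilde h_{ij}=\langle h,\tilde h\rangle$, which is the claim.

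I do not expect a real obstacle here; the computation is routine linear algebra on the spinor bundle and the only care required is to record the skew-adjointness property of Clifford multiplication rather than quoting \eqref{OrthogonalityOfSpinors} directly, since that equation was stated in the imaginary (non-unit) setting. If desired, one can alternatively phrase the whole argument as the special case $f\equiv 1$ of Lemma \ref{ImaginaryInnerProductForPhi}, after noting that \eqref{OrthogonalityOfSpinors} holds for any spinor of length $f$ by the same skew-adjointness reasoning.
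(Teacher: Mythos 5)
Your proof is correct and is exactly the argument the paper intends: the paper states this lemma without proof, leaving it as the real-Killing-spinor analogue of Lemma \ref{ImaginaryInnerProductForPhi} with $f\equiv 1$, and your computation reproduces that proof while correctly supplying the justification (skew-adjointness of Clifford multiplication) for why the orthogonality relation \eqref{OrthogonalityOfSpinors} holds for any spinor, not just an imaginary Killing spinor.
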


Then by Proposition $\ref{BochnerTypeForumula}$, Lemma $\ref{RealInnerProductForPhi}$, and the fact that $\mu\int_{M}\langle D\Phi(h),\Phi(h)\rangle dvol_{g}$ is real, we obtain the following estimate for the Einstein operator $\nabla^{*}\nabla-2\mathring{R}$.
\begin{thm}[\cite{GHP03}, \cite{Wan91}]\label{RealKillingEstimate}
If the Killing constant $\mu$ is real, then, for all traceless transverse $h\in C^{\infty}(M, S^{2}(M))$,
\begin{equation}
\begin{aligned}
\int_{M}\langle(\nabla^{*}\nabla-2\mathring{R})h, h\rangle dvol_{g}
&=\int_{M}\langle D\Phi(h),D\Phi(h)\rangle dvol_{g}\\
&\ \ \ -2\mu\int_{M}\langle D\Phi(h),\Phi(h)\rangle dvol_{g}\\
&\ \ \ -n(n-2)\mu^{2}\int_{M}\langle h,h\rangle dvol_{g}.
\end{aligned}
\end{equation}
\end{thm}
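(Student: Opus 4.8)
The plan is to derive the identity in Theorem \ref{RealKillingEstimate} directly from the Bochner type formula in Proposition \ref{BochnerTypeForumula} by pairing against $\Phi(h)$ and integrating over $M$. Since $\sigma$ has constant (unit) length, the complication from the non-constant length function $f$ that plagued the imaginary case disappears entirely, so this is essentially a clean integration by parts. First I would take the $L^2$ inner product of both sides of (\ref{Bochner}) with $\Phi(h)$, using that $h$ is traceless ($\mathrm{tr}\, h = 0$) and transverse ($\delta h = 0$) to kill the last two terms $4\mu^2(\mathrm{tr}\, h)e_j\cdot\sigma\otimes e^j$ and $-4\mu(\delta h)_j\cdot\sigma\otimes e^j$ outright. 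This leaves
\begin{equation*}
\langle\Phi((\nabla^*\nabla - 2\mathring{R})h),\Phi(h)\rangle = \langle D^*D\Phi(h),\Phi(h)\rangle - n(n-2)\mu^2\langle\Phi(h),\Phi(h)\rangle - 2\mu\langle D\Phi(h),\Phi(h)\rangle
\end{equation*}
pointwise (after taking real parts).

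Next I would integrate over $M$ and invoke Lemma \ref{RealInnerProductForPhi} to replace $\mathrm{Re}\langle\Phi(h),\Phi(h)\rangle$ by $\langle h,h\rangle$ and $\mathrm{Re}\langle\Phi((\nabla^*\nabla-2\mathring{R})h),\Phi(h)\rangle$ by $\langle(\nabla^*\nabla-2\mathring{R})h, h\rangle$; here I should note that $(\nabla^*\nabla - 2\mathring{R})h$ is again a symmetric $2$-tensor, so the lemma applies to the pair $((\nabla^*\nabla-2\mathring{R})h, h)$. For the first term on the right, self-adjointness of the twisted Dirac operator $D$ gives $\int_M \langle D^*D\Phi(h),\Phi(h)\rangle\, dvol_g = \int_M \langle D\Phi(h), D\Phi(h)\rangle\, dvol_g$, with no boundary term since $M$ is compact (real Killing spinors force compactness, as recalled in Section 2) or $h$ is compactly supported. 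Assembling these three pieces yields exactly the displayed identity, once one observes that the middle term $-2\mu\int_M \langle D\Phi(h),\Phi(h)\rangle\, dvol_g$ is real — which is precisely the hypothesis flagged in the sentence preceding the theorem, and follows because $\int_M\langle D^2\Phi(h),\Phi(h)\rangle$ and $\int_M\|D\Phi(h)\|^2$ are real while the remaining terms in the paired Bochner formula are manifestly real.

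The only genuinely delicate point — and the one I would expect to be the main obstacle if one insisted on a fully self-contained argument — is justifying that $\mathrm{Re}\langle D\Phi(h),\Phi(h)\rangle$ integrates to something real rather than merely bounding it; but this is handed to us by the remark immediately before the theorem statement, so in practice there is no obstacle at all. The computation is purely formal manipulation of the Bochner identity plus one integration by parts, and the substance of the theorem lies entirely in Proposition \ref{BochnerTypeForumula}, which we may assume. I would therefore present the proof in three short displayed lines: (i) pair (\ref{Bochner}) with $\Phi(h)$ and drop the trace/divergence terms; (ii) integrate, apply Lemma \ref{RealInnerProductForPhi}, and integrate the $D^*D$ term by parts; (iii) collect terms to read off the stated formula.
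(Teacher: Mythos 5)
Your proposal is correct and follows exactly the route the paper intends: the paper offers no separate proof beyond the sentence preceding the theorem, which invokes Proposition \ref{BochnerTypeForumula}, Lemma \ref{RealInnerProductForPhi}, and the reality of $\mu\int_{M}\langle D\Phi(h),\Phi(h)\rangle\, dvol_{g}$ --- precisely your steps (i)--(iii). Your added observation that the reality of the cross term also follows from the formal self-adjointness of $D$ is a correct and slightly more direct justification than deducing it by difference.
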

\begin{remark}
As mentioned in \cite{Die13} and \cite{Kro15}, Theorem $\ref{RealKillingEstimate}$ has been used to obtain a lower bound on the eigenvalues of the Einstein operator in \cite{GHP03}. The lower bound is $-(n-1)^{2}\mu^{2}$, as we can also see in the following Corollary $\ref{StabilityWithRealKilling}$.
\end{remark}

\begin{cor}\label{StabilityWithRealKilling}
A Riemannian manifold with a non-zero real Killing spinor with the Killing constant $\mu$ is stable if the twisted Dirac operator $D$ satisfies
$$(D-\mu)^{2}\geq (n-1)^{2}\mu^{2},$$
on $\{\Phi(h):h\in C^{\infty}(M, S^{2}(M)), trh=0, \delta h=0\}$.
\end{cor}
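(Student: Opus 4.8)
The plan is to derive the corollary directly from the identity in Theorem~\ref{RealKillingEstimate} by completing the square in the twisted Dirac operator $D$. Since $(M^n,g)$ carries a non-zero real Killing spinor it is compact, so $D$ is formally self-adjoint on $\mathcal{S}\otimes T^*M$ and the integrations by parts below involve no boundary terms. Fix a traceless transverse $h\in C^{\infty}(M,S^{2}(M))$.

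First I would rewrite the first two integrals on the right-hand side of Theorem~\ref{RealKillingEstimate}. Expanding the square and using that $\mu$ is real together with the fact, recorded just before Theorem~\ref{RealKillingEstimate}, that $\mu\int_M\langle D\Phi(h),\Phi(h)\rangle\,dvol_g$ is real, one gets
\[
\int_M\|D\Phi(h)\|^2\,dvol_g-2\mu\int_M\langle D\Phi(h),\Phi(h)\rangle\,dvol_g=\int_M\|(D-\mu)\Phi(h)\|^2\,dvol_g-\mu^2\int_M\|\Phi(h)\|^2\,dvol_g.
\]
By Lemma~\ref{RealInnerProductForPhi}, $\int_M\|\Phi(h)\|^2\,dvol_g=\int_M\langle h,h\rangle\,dvol_g$. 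Absorbing the $-\mu^2$ from the completed square into the term $-n(n-2)\mu^2\int_M\langle h,h\rangle\,dvol_g$ and using $1+n(n-2)=(n-1)^2$, Theorem~\ref{RealKillingEstimate} takes the form
\[
\int_M\langle(\nabla^{*}\nabla-2\mathring{R})h,h\rangle\,dvol_g=\int_M\|(D-\mu)\Phi(h)\|^2\,dvol_g-(n-1)^2\mu^2\int_M\langle h,h\rangle\,dvol_g.
\]

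Finally, self-adjointness of $D$ and reality of $\mu$ give $\int_M\|(D-\mu)\Phi(h)\|^2\,dvol_g=\int_M\langle(D-\mu)^2\Phi(h),\Phi(h)\rangle\,dvol_g$; since $h$ is traceless and transverse, $\Phi(h)$ belongs to the set on which the hypothesis $(D-\mu)^2\geq(n-1)^2\mu^2$ is imposed, so this quantity is at least $(n-1)^2\mu^2\int_M\|\Phi(h)\|^2\,dvol_g=(n-1)^2\mu^2\int_M\langle h,h\rangle\,dvol_g$. Plugging this into the displayed identity yields $\int_M\langle(\nabla^{*}\nabla-2\mathring{R})h,h\rangle\,dvol_g\geq0$ for every traceless transverse $h$, i.e. $(M^n,g)$ is stable. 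I do not expect any genuine obstacle here; the only points needing care are the bookkeeping of real parts in the cross term and the arithmetic $1+n(n-2)=(n-1)^2$ that produces the sharp shift $-(n-1)^2\mu^2$. The same identity, by discarding the nonnegative term $\int_M\|(D-\mu)\Phi(h)\|^2\,dvol_g$, also recovers the unconditional lower bound $-(n-1)^2\mu^2$ for the Einstein operator mentioned in the Remark.
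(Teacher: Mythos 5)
Your proposal is correct and follows essentially the same route as the paper: completing the square in $D$ to rewrite the identity of Theorem~\ref{RealKillingEstimate} as $\int_{M}\langle(\nabla^{*}\nabla-2\mathring{R})h, h\rangle\, dvol_{g}=\int_{M}\langle (D-\mu)^{2}\Phi(h),\Phi(h)\rangle\, dvol_{g}-(n-1)^{2}\mu^{2}\int_{M}\langle h,h\rangle\, dvol_{g}$, and then applying the hypothesis on $(D-\mu)^{2}$. The only difference is that you spell out the bookkeeping (self-adjointness of $D$, the arithmetic $1+n(n-2)=(n-1)^{2}$, and Lemma~\ref{RealInnerProductForPhi}) that the paper leaves implicit.
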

\begin{proof}
By Theorem $\ref{RealKillingEstimate}$, for traceless transverse symmetric 2-tensor $h$, we have
\begin{equation}
\begin{aligned}
\int_{M}\langle(\nabla^{*}\nabla-2\mathring{R})h, h\rangle dvol_{g}=
&\int_{M}\langle (D-\mu)^{2}\Phi(h),\Phi(h)\rangle dvol_{g}\\
&-(n-1)^{2}\mu^{2}\int_{M}\langle h,h\rangle dvol_{g}.
\end{aligned}
\end{equation}
This implies the stability condition.
\end{proof}


\section{Some unstable regular Sasaki-Einstein manifolds}
\noindent In this section, we study instability of regular Sasaki-Einstein manifolds, which would then be essentially total spaces of principal circle bundles over K\"{a}hler-Einstein manifolds with positive first Chern classes. A product of two Einstein manifolds $(B^{n_{1}}, g_{1})$ and $(B^{n_{2}}, g_{2})$ with the same positive Einstein constant is an unstable Einstein manifold. Indeed, $h=\frac{g_{1}}{n_{1}}-\frac{g_{2}}{n_{2}}$ is an unstable traceless transverse direction. We show that if the base manifold of a regular Sasaki-Einstein manifold is a product of two K\"{a}hler-Einstein manifolds then we obtain an unstable direction on the Sasaki-Einstein manifold by lifting this unstable direction on the base K\"{a}hler-Einstein manifold to the total space.

Let us first recall some basic facts about Sasaki manifolds. For details, we refer to \cite{Bla10} and \cite{FOW09}. A quick definition of Sasaki manifolds is given as the following, see, e.g. \cite{FOW09}.
\begin{defn}\label{Definition1OfSasakiManifolds}{\rm (Definition 1 of Sasaki manifolds)}
$(M^{n}, g)$ is said to be a Sasaki manifold if the cone $(\mathbb{R}_{+}\times M, dr^{2}+r^{2}g)$ is K\"{a}hler,
where $\mathbb{R}_{+}=(0, +\infty)$, and $r$ is coordinate on $\mathbb{R}_{+}$.
\end{defn}
\begin{remark}
From Definition $\ref{Definition1OfSasakiManifolds}$, we note that a Sasaki manifold has to be of odd dimension.
\end{remark}
There are several equivalent definitions of Sasaki manifolds. The one given in the following looks more complicated and tells us more about structure on Sasaki manifolds themselves.
\begin{defn}\label{Definition2OfSasakiManifolds}{\rm (Definition 2 of Sasaki manifolds)}
Let $(M^{2p+1}, g, \phi, \eta, \xi)$ be a Riemannian manifold of odd dimension $2p+1$ with a $(1,1)$-tensor $\phi$, 1-form $\eta$, and a vector field $\xi$. It is a Sasaki manifold, if
\begin{enumerate}
\renewcommand{\labelenumi}{(\theenumi)}
\item $\eta\wedge (d\eta)^{p}\neq0,$
\item $\eta(\xi)=1,$
\item $\phi^{2}=-id+\eta\otimes\xi,$
\item $g(\phi X, \phi Y)=g(X, Y)-\eta(X)\eta(Y),$
\item $g(X, \phi Y)=d\eta(X, Y),$
\item the almost complex structure on $M^{2p+1}\times\mathbb{R}$ defined by
      $$J(X,f\frac{d}{dr})=(\phi X-f\xi, \eta(X)\frac{d}{dr})$$
      is integrable,
\end{enumerate}
for all vector fields $X$ and $Y$ on $M^{2p+1}$. The vector $\xi$ is called the Reeb vector field. And this is a regular Sasaki manifold if the Reeb vector field $\xi$ is a regular vector field. If, in addition, $g$ is an Einstein metric, then this is a Sasaki-Einstein manifold.
\end{defn}
\begin{remark}\label{PropertiesOfSasakiStructure}
As consequences of Definition $\ref{Definition2OfSasakiManifolds}$, we have $\phi\xi=0$, $\eta\circ\phi=0$, and $\nabla_{X}\xi=-\phi X$, in particular, $\nabla_{\xi}\xi=0$. Moreover, $\xi$ is a Killing vector field. For details, see, e.g. \cite{Bla10}.
\end{remark}

\begin{remark}
Let us recall one more definition of Sasaki manifold. $(M^{n}, g)$ is a Sasaki manifold if there exists a Killing vector filed $\xi$ of unit length on $M^{n}$ so that the Riemann curvature satisfies the condition
\begin{equation}\label{CurvatureOnSasakiManifolds}
R_{X\xi}Y=-g(\xi,Y)X+g(X,Y)\xi,
\end{equation}
for any pair of vector fields $X$ and $Y$ on $M^{n}$. Then from $(\ref{CurvatureOnSasakiManifolds})$, we can easily see that on a Sasaki-Einstein manifold $(M^{n}, g)$ of dimension $n$, $Ric_{g}=(n-1)g$.
\end{remark}

The relationship between real Killing spinors and the Sasaki-Einstein structures has been observed by Th. Friedrich and I. Kath in \cite{FK89} and \cite{FK90}, and then was further studied by C. B\"{a}r in \cite{Bar93}. We briefly summarize their results as the following.
\begin{thm}[Th. Friedrich and I. Kath, and C. B\"{a}r]\label{KillingSpinorsAndSasakiEinstein}
A complete simply-connected Sasaki-Einstein manifold of dimension $n$ with Einstein constant $n-1$ carries at least 2 linearly independent real Killing spinors with distinct Killing constants equal $\frac{1}{2}$ and $-\frac{1}{2}$ for $n\equiv3 (mod4)$, and to the same Killing number equals $\frac{1}{2}$ for $n\equiv1 (mod4)$, respectively.

Conversely, a complete Riemannian spin manifold with such spinors in these dimensions is Sasaki-Einstein.
\end{thm}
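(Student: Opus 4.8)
The plan is to route everything through the \emph{metric cone} $(\bar M,\bar g):=(\mathbb{R}_{+}\times M,\, dr^{2}+r^{2}g)$, which has real dimension $n+1$ and, once it is known to be K\"ahler, complex dimension $m:=\tfrac{n+1}{2}$. Two classical facts will do most of the work. First, the cone formula gives $\mathrm{Ric}_{\bar g}=0$ in the radial direction and $\mathrm{Ric}_{\bar g}=\mathrm{Ric}_{g}-(n-1)g$ in directions tangent to $M$; combined with Definition \ref{Definition1OfSasakiManifolds} (which builds in that the cone is K\"ahler when $M$ is Sasaki), this shows that $(M^{n},g)$ is Sasaki--Einstein with Einstein constant $n-1$ \emph{if and only if} $(\bar M,\bar g)$ is Ricci-flat K\"ahler, i.e.\ Calabi--Yau of complex dimension $m$. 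Second, B\"ar's cone construction provides a natural linear bijection between real Killing spinors on $M$ with Killing constant $\pm\tfrac12$ and parallel spinors on $\bar M$, implemented by restricting a parallel spinor on $\bar M$ to the hypersurface $\{r=1\}=M$; the sign of the Killing constant is recorded in the behaviour of the restricted spinor under Clifford multiplication by $\partial_{r}$. Thus both implications become statements about parallel spinors on the cone.

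For the forward implication, I would take $(M^{n},g)$ complete, simply connected and Sasaki--Einstein with Einstein constant $n-1$, so that $\bar M$ is Calabi--Yau of complex dimension $m$ (simple connectivity is what makes the canonical bundle genuinely trivial and $M$ spin). Under the K\"ahler identification of the spinor bundle of $\bar M$ with $\Lambda^{0,\ast}\bar M$, the space of parallel spinors is spanned by the constant section of $\Lambda^{0,0}$ and a parallel section of $\Lambda^{0,m}$; restricting these will produce at least two linearly independent real Killing spinors on $M$. The mod-$4$ alternative for the Killing constants is then a chirality count: $\Lambda^{0,0}$ and $\Lambda^{0,m}$ differ in exterior degree by $m$, hence lie in the same half-spinor bundle of $\bar M$ exactly when $m$ is even, that is, when $n\equiv 3\pmod{4}$. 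Tracking how B\"ar's restriction map converts the chirality of a parallel spinor on the even-dimensional cone into the sign of the Killing constant on $M$ then yields the stated values: $+\tfrac12$ and $-\tfrac12$ when $n\equiv 3\pmod{4}$, and $+\tfrac12$ twice when $n\equiv 1\pmod{4}$.

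For the converse, let $(M^{n},g)$ be a complete Riemannian spin manifold carrying two linearly independent real Killing spinors of the stated kind. Then $\bar M$ carries two linearly independent parallel spinors, so $\bar M$ is Ricci-flat and its restricted holonomy fixes a nonzero spinor, hence sits in one of the groups on Berger's list admitting parallel spinors. Gallot's theorem together with completeness of $M$ forces $\bar M$ to be either flat or irreducible; in the flat case $M$ is a round sphere, which is Sasaki--Einstein, so assume $\bar M$ irreducible. The possibilities $G_{2}$ and $Spin(7)$ are excluded because their cones carry only one parallel spinor, not two (this is exactly what underlies the $n=7$ exception in the surrounding discussion, where a nearly parallel $G_{2}$ manifold $M^{7}$ has a cone of holonomy $Spin(7)$); and $Sp(k)$, if it occurs, makes $M$ $3$-Sasakian and a fortiori Sasaki--Einstein. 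In every remaining case $\mathrm{Hol}(\bar M)\subseteq SU(m)$, so $\bar M$ is Calabi--Yau, and unwinding the cone equivalence shows $(M,g)$ is Sasaki--Einstein. One can also give the converse without the holonomy classification, by building the Sasaki data by hand: from the two Killing spinors $\sigma_{1},\sigma_{2}$ form the vector field $\xi$ dual to $X\mapsto\mathrm{Re}\langle X\cdot\sigma_{1},\sigma_{2}\rangle$, normalize it to unit length, and verify from the Killing spinor equation and $\mathrm{Ric}_{g}=(n-1)g$ that $\xi$ is a unit Killing field satisfying $R_{X\xi}Y=-g(\xi,Y)X+g(X,Y)\xi$, the curvature identity characterizing Sasaki manifolds.

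The main obstacle, in both directions, is the representation-theoretic bookkeeping: one has to set up B\"ar's correspondence carefully enough to track the sign of the Killing constant under restriction, and match it against the precise way the two parallel spinors of a Calabi--Yau $m$-fold sit in the half-spinor bundles of the cone as a function of the parity of $m$ --- this is essentially the whole content of the mod-$4$ dichotomy. In the converse there is the additional work of ruling out the reducible and special-holonomy cones, where Gallot's theorem and the completeness hypothesis are essential, and of recognizing the degenerate cases (round spheres, the low-dimensional exceptional holonomies) for what they are. The direct construction of $\xi$ in the last remark is computationally heavier but conceptually elementary, and either route completes the proof.
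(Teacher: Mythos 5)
The paper does not actually prove this theorem: it is stated as a quoted result, attributed to Friedrich--Kath and B\"ar, with the proof deferred to \cite{FK89}, \cite{FK90}, \cite{Bar93} (see also \cite{BFGK91}). So there is no in-paper argument to compare against; what you have written is a reconstruction of the proof from those sources, and it is the right reconstruction. Passing to the metric cone, identifying ``Sasaki--Einstein with constant $n-1$'' with ``the cone is Ricci-flat K\"ahler'', and using B\"ar's bijection between Killing spinors with constant $\pm\tfrac12$ on $M$ and parallel spinors on the cone is exactly the route of \cite{Bar93}; the two canonical parallel spinors of an $SU(m)$-holonomy cone sitting in $\Lambda^{0,0}$ and $\Lambda^{0,m}$, with the parity of $m$ governing their relative chirality, is the correct source of the mod-$4$ dichotomy. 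For the converse, the combination of Gallot's theorem (a complete, non-flat metric cone is irreducible), the Berger--Wang list of irreducible holonomies admitting parallel spinors, and the count of invariant spinors to exclude $Spin(7)$ (exactly the $n=7$ exception discussed around the theorem) and to absorb $Sp(k)$ into $SU(m)$ is again B\"ar's argument; the alternative direct construction of $\xi$ from a pair of Killing spinors is Friedrich--Kath's original route in dimensions $5$ and $7$.

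One caution about the single step you leave unexecuted, which is where essentially all the content of the mod-$4$ statement lives. The translation ``chirality of a parallel spinor on the cone $\mapsto$ sign of the Killing constant on $M$'' is not the naive one: since $n$ is odd, the restrictions $\Sigma^{+}\bar M|_{M}$ and $\Sigma^{-}\bar M|_{M}$ carry the two \emph{inequivalent} irreducible module structures over the Clifford algebra of $M$ (they differ by a sign of Clifford multiplication, i.e.\ by the action of the complex volume element), and a Killing spinor with constant $\mu$ for one structure is a Killing spinor with constant $-\mu$ for the other. Keeping track of which structure is the standard spinor bundle of $M$, together with the sign of the Weingarten map of $\{r=1\}$, is precisely what converts ``same chirality when $m$ is even'' into ``opposite Killing constants when $n\equiv3\ (\mathrm{mod}\ 4)$'' rather than the reverse; a too-quick application of the spinorial Gauss formula suggests the opposite pairing. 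You correctly flag this bookkeeping as the main obstacle, but it does genuinely have to be carried out (as it is in \cite{Bar93} and \cite{BFGK91}) rather than inferred from the known answer.
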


\begin{remark}
Th. Friedrich also proved that a complete 4-dimensional manifold with a real Killing spinor is isometric to the standard sphere in \cite{Fri81}. And O. Hijazi proved the analogous result in dimension 8 in \cite{Hij86}. More generally, C. B\"{a}r proved that all complete manifolds of even dimension $n$, $n\neq6$, with a real Killing spinor are isometric to a standard sphere in \cite{Bar93}. Thus, complete manifolds of even dimension $n$, $n\neq6$, with a real Killing spinor are strictly stable.
\end{remark}

\begin{remark}
In the first part of Theorem $\ref{KillingSpinorsAndSasakiEinstein}$, we need at least two linearly independent real Killing spinors in order to have a Sasaki-Einstein structure. Actually, on a complete Riemannian spin manifold of odd dimension, except 7, existence of one Killing spinor automatically implies the existence of the second one that we need in Theorem $\ref{KillingSpinorsAndSasakiEinstein}$. The 7-dimensional manifolds with a single linearly independent Killing spinor has been studied in \cite{Kat90} and in more details in \cite{FK97}. We also refer to the book \cite{BFGK91}. The Jensen's sphere is a 7-dimensional complete manifold with a single linearly independent Killing spinor, and it is unstable as mentioned in Introduction. We refer to \cite{ADP83}, \cite{Bar93}, \cite{Bes87}, \cite{Jen73}, and \cite{Spa11} for this interesting example.
\end{remark}

Now let us recall the construction of a typical regular Sasaki manifold in \cite{Bla10}. Let $(B^{2p}, G, J)$ be a K\"{a}hler manifold of real dimension $2p$, with the K\"{a}hler form $\Omega=G(\cdot, J\cdot)$, where $G$ is a Riemannian metric and $J$ is an almost complex structure. Then let $\pi: M^{2p+1}\rightarrow B^{2p}$ be a principal $S^{1}$-bundle with a connection $\eta$ with the curvature form $d\eta=2\pi^{*}\Omega$. Let $\xi$ be a vertical vector field on $M^{2p+1}$, generated by $S^{1}$-action, such that $\eta(\xi)=1$, and $\widetilde{X}$ denotes the horizontal lift of $X$ with respect to the connection $\eta$ for a vector field $X$ on $B^{2p}$. We set
\begin{equation}\label{AlmostComplexStructureOnTotalSpace}
\phi X=\widetilde{J\pi_{*}X},
\end{equation}
and
\begin{equation}\label{RiemannianMetricOnTotalSpace}
g(X, Y)=G(\pi_{*}X, \pi_{*}Y)+\eta(X)\eta(Y),
\end{equation}
for vector fields $X$ and $Y$ on $M^{2p+1}$. Then $(M^{2p+1}, g, \phi, \eta, \xi)$ is a regular Sasaki manifold.

Conversely, any regular Sasaki manifold can be obtained in this way, see, e.g. Theorem 3.9 and Example 6.7.2 in \cite{Bla10}. Moreover, if $(M^{2p+1}, g)$ is Sasaki-Einstein with Einstein constant $2p$, then $(B^{2p}, G, J)$ is K\"{a}hler-Einstein with Einstein constant $2p+2$.

We fix some notations before carrying on calculations. $\nabla^{g}$ and $\nabla^{G}$ denote the Levi-Civita connections on $(M^{2p+1}, g)$ and on $(B^{2p}, G)$, respectively. $R^{g}$ and $Ric^{g}$, and $R^{G}$ and $Ric^{G}$ denote Riemann and Ricci curvatures on $(M^{2p+1}, g)$ and on $(B^{2p}, G)$, respectively. In the rest of this section, we use $X, Y, Z, W, \cdots$ to denote vector fields on $B^{2p}$, and we use $\widetilde{X}, \widetilde{Y}, \widetilde{Z}, \widetilde{W}, \cdots$ to denote their horizontal lift to $M^{2p+1}$ with respect to the connection $\eta$. And we choose and fix a local orthrnormal frame $\{X_{1}, X_{2}, \cdots, X_{2p}\}$ of $TB$. Then $\{\widetilde{X_{1}}, \widetilde{X_{2}}, \cdots, \widetilde{X_{2p}}, \xi\}$ is a local orthonormal frame of $TM$. We use $\nabla^{g}_{i}$ to denote $\nabla^{g}_{\widetilde{X_{i}}}$, and $\nabla^{G}_{i}$ to denote $\nabla^{G}_{X_{i}}$.

\begin{lem}\label{RelationForCovariantDerivative}
On a regular Sasaki manifold $(M^{2p+1}, g, \phi, \eta, \xi)$ constructed above. We have
\begin{equation}\label{LieDerivative}
[\xi, \widetilde{X}]=\mathcal{L}_{\xi}\widetilde{X}=0,
\end{equation}
\begin{equation}\label{CovariantDerivativeForHorizontalVectors}
\nabla^{g}_{\widetilde{X}}\widetilde{Y}=\widetilde{\nabla^{G}_{X}Y}-\Omega(X, Y)\xi,
\end{equation}
\begin{equation}\label{CovariantDerivativeForVerticalHorizontalVectors}
\nabla^{g}_{\xi}\widetilde{X}=\nabla^{g}_{\widetilde{X}}\xi=-\phi\widetilde{X},
\end{equation}
\begin{equation}\label{CovariantDerivativeForVerticalVectors}
\nabla^{g}_{\xi}\xi=0.
\end{equation}
\end{lem}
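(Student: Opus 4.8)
The plan is to establish the four identities in Lemma \ref{RelationForCovariantDerivative} by exploiting the general theory of Riemannian submersions (O'Neill's formulas) applied to the principal $S^1$-bundle $\pi:(M^{2p+1},g)\to(B^{2p},G,J)$, together with the defining relation $d\eta=2\pi^*\Omega$ for the connection and the properties of the Sasaki structure collected in Remark \ref{PropertiesOfSasakiStructure}. First I would prove \eqref{LieDerivative}: since $\xi$ generates the $S^1$-action and $\widetilde X$ is the horizontal lift of $X$ with respect to the $S^1$-invariant connection $\eta$, the flow of $\xi$ preserves both the horizontal distribution and the projection to $B$, so $\mathcal L_\xi\widetilde X$ is horizontal and projects to $0$, hence vanishes; equivalently $[\xi,\widetilde X]=0$ because $\widetilde X$ is $S^1$-invariant and $\xi$ is vertical. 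Then \eqref{CovariantDerivativeForVerticalVectors} is just $\nabla_\xi\xi=0$ from Remark \ref{PropertiesOfSasakiStructure} (the integral curves of the unit Killing field $\xi$ are geodesics).

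For \eqref{CovariantDerivativeForVerticalHorizontalVectors} I would use the general identity $\nabla^g_X\xi=-\phi X$ from Remark \ref{PropertiesOfSasakiStructure}, which gives $\nabla^g_{\widetilde X}\xi=-\phi\widetilde X=-\widetilde{J\pi_*\widetilde X}=-\widetilde{JX}$ directly from the definition \eqref{AlmostComplexStructureOnTotalSpace} of $\phi$. To get the other equality $\nabla^g_\xi\widetilde X=\nabla^g_{\widetilde X}\xi$, I would compute the difference using \eqref{LieDerivative}: $\nabla^g_\xi\widetilde X-\nabla^g_{\widetilde X}\xi=[\xi,\widetilde X]=0$ by torsion-freeness of the Levi-Civita connection. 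This is the cleanest route and avoids any curvature computation.

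The main identity, and the step I expect to be the real work, is \eqref{CovariantDerivativeForHorizontalVectors}. Here I would decompose $\nabla^g_{\widetilde X}\widetilde Y$ into its horizontal and vertical parts. For the horizontal part, the standard O'Neill formula for a Riemannian submersion says that the horizontal component of $\nabla^g_{\widetilde X}\widetilde Y$ is exactly the horizontal lift $\widetilde{\nabla^G_X Y}$ of the base Levi-Civita connection. For the vertical part, since $\{\xi\}$ spans the vertical space and $\xi$ has unit length, the vertical component is $g(\nabla^g_{\widetilde X}\widetilde Y,\xi)\,\xi$; I would rewrite $g(\nabla^g_{\widetilde X}\widetilde Y,\xi)=-g(\widetilde Y,\nabla^g_{\widetilde X}\xi)=g(\widetilde Y,\phi\widetilde X)$ using \eqref{CovariantDerivativeForVerticalHorizontalVectors} (just proved) and the compatibility of $\nabla^g$ with $g$. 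Then property (5) in Definition \ref{Definition2OfSasakiManifolds}, namely $g(\widetilde X,\phi\widetilde Y)=d\eta(\widetilde X,\widetilde Y)$, together with $d\eta=2\pi^*\Omega$ and $\Omega=G(\cdot,J\cdot)$, identifies this coefficient: $g(\widetilde Y,\phi\widetilde X)=-g(\phi\widetilde Y,\widetilde X)=-d\eta(\widetilde Y,\widetilde X)$, and $d\eta(\widetilde X,\widetilde Y)=2\Omega(X,Y)$, so the vertical part works out to $-\Omega(X,Y)\xi$ after tracking signs carefully. The only genuine obstacle is bookkeeping: getting the sign conventions for $\Omega$, $d\eta$, $\phi$, and the O'Neill tensor all consistent, so I would fix conventions at the outset (e.g. $d\eta(X,Y)=X\eta(Y)-Y\eta(X)-\eta([X,Y])$ and $d\eta=2\pi^*\Omega$ as stated) and verify the sign in \eqref{CovariantDerivativeForHorizontalVectors} against a known model such as the Hopf fibration $S^{2p+1}\to\mathbb{CP}^p$.
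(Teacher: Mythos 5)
Your route is essentially the paper's: the paper's own proof is a two-line citation of the $S^1$-invariance of the horizontal distribution (for \eqref{LieDerivative}), the properties in Remark \ref{PropertiesOfSasakiStructure}, and O'Neill's fundamental equations of a submersion, and you are simply filling in those details. The bracket argument, the use of torsion-freeness to get $\nabla^g_\xi\widetilde X=\nabla^g_{\widetilde X}\xi$, and the horizontal/vertical decomposition of $\nabla^g_{\widetilde X}\widetilde Y$ are all fine.

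One concrete warning on the step you yourself flag as ``bookkeeping'': the route you describe for the vertical coefficient, namely $g(\widetilde Y,\phi\widetilde X)=-d\eta(\widetilde Y,\widetilde X)$ via property (5) of Definition \ref{Definition2OfSasakiManifolds} combined with $d\eta=2\pi^*\Omega$, literally yields $-2\,\Omega(X,Y)\xi$ for the vertical part, which is off by a factor of $2$ from \eqref{CovariantDerivativeForHorizontalVectors}. The source of the mismatch is a convention clash: property (5) as stated is consistent with Blair's normalization of the exterior derivative (which carries a factor $\tfrac12$), whereas $d\eta=2\pi^*\Omega$ is stated for the usual exterior derivative. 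The clean fix is to bypass property (5) entirely and compute the coefficient directly from the construction: by \eqref{AlmostComplexStructureOnTotalSpace} and \eqref{RiemannianMetricOnTotalSpace},
\begin{equation*}
g(\nabla^g_{\widetilde X}\widetilde Y,\xi)=-g(\widetilde Y,\nabla^g_{\widetilde X}\xi)=g(\widetilde Y,\phi\widetilde X)=G(Y,JX)=\Omega(Y,X)=-\Omega(X,Y),
\end{equation*}
which agrees with O'Neill's $\tfrac12[\widetilde X,\widetilde Y]^{v}=-\tfrac12 d\eta(\widetilde X,\widetilde Y)\xi=-\Omega(X,Y)\xi$. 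With that substitution your argument is complete and correct.
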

\begin{proof}
The equality $(\ref{LieDerivative})$ follows from the fact that the horizontal distribution is $S^{1}$ invariant and $\xi$ is generated by the $S^{1}$-action. Then the rest properties for covariant derivatives follow from properties in Remark $\ref{PropertiesOfSasakiStructure}$, $(\ref{LieDerivative})$, and the fundamental equations of a submersion in \cite{One66} (also see \cite{Bes87} for the equations).
\end{proof}

Let $h\in C^{\infty}(B, S^{2}(B))$, and then $\tilde{h}=\pi^{*}h\in C^{\infty}(M, S^{2}(M))$. Then by Lemma $\ref{RelationForCovariantDerivative}$ and straightforward calculations, we obtain a relationship between $(\nabla^{g})^{*}\nabla^{g}\widetilde{h}$ and $(\nabla^{G})^{*}\nabla^{G}h$.
\begin{lem}\label{RelationBetweenLaplacian}
\begin{equation}
(\nabla^{g}_{k}\nabla^{g}_{k}\tilde{h})_{ij}=(\pi^{*}(\nabla^{G}_{k}\nabla^{G}_{k}h))_{ij}-2\tilde{h}_{ij},
\end{equation}
\begin{equation}
(\nabla^{g}_{\nabla^{g}_{k}\widetilde{X_{k}}}\tilde{h})_{ij}=(\pi^{*}(\nabla^{G}_{\nabla^{G}_{k}X_{k}}h))_{ij},
\end{equation}
\begin{equation}
(\nabla^{g}_{\xi}\nabla^{g}_{\xi}\tilde{h})_{ij}=-2\tilde{h}_{ij}+2\tilde{h}(\phi \widetilde{X_{i}}, \phi \widetilde{X_{j}}),
\end{equation}
and therefore,
\begin{equation}
((\nabla^{g})^{*}\nabla^{g}\widetilde{h})_{ij}=(\pi^{*}((\nabla^{G})^{*}\nabla^{G}h))_{ij}+4\tilde{h}_{ij}-2\tilde{h}(\phi \widetilde{X_{i}}, \phi \widetilde{X_{j}}),
\end{equation}
for all $1\leq i, j\leq 2p$, where we take summation for the repeated index $k$ through $1$ to $2p$.
\end{lem}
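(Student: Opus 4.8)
The goal is to prove Lemma \ref{RelationBetweenLaplacian}, which computes the rough Laplacian $(\nabla^g)^*\nabla^g$ acting on the pulled-back tensor $\tilde h = \pi^* h$ in terms of the rough Laplacian on the base. Since $(\nabla^g)^*\nabla^g \tilde h = -\nabla^g_k\nabla^g_k\tilde h - \nabla^g_\xi\nabla^g_\xi\tilde h + \nabla^g_{\nabla^g_k \widetilde{X_k}}\tilde h + \nabla^g_{\nabla^g_\xi\xi}\tilde h$, and the last term vanishes by $(\ref{CovariantDerivativeForVerticalVectors})$, the proof reduces to computing the three displayed expressions separately and adding them. So the plan is to work entirely with the structure equations collected in Lemma \ref{RelationForCovariantDerivative}, expanding each second covariant derivative of $\tilde h$ by the Leibniz rule and bookkeeping the resulting correction terms.

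The first step is to compute $(\nabla^g_k\nabla^g_k\tilde h)_{ij}$. Using the Leibniz rule, $\nabla^g_{\widetilde{X_k}}\nabla^g_{\widetilde{X_k}}\tilde h$ evaluated on $(\widetilde{X_i},\widetilde{X_j})$ produces the ``base'' term $\pi^*(\nabla^G_k\nabla^G_k h)_{ij}$ plus terms where the derivatives hit the arguments $\widetilde{X_i}$ or $\widetilde{X_j}$. By $(\ref{CovariantDerivativeForHorizontalVectors})$, $\nabla^g_{\widetilde{X_k}}\widetilde{X_i} = \widetilde{\nabla^G_{X_k}X_i} - \Omega(X_k,X_i)\xi$; since $\tilde h = \pi^* h$ annihilates the vertical vector $\xi$, the $\xi$-component is invisible to $\tilde h$ in first order, but survives in second order when another derivative falls on it --- this is where the $-2\tilde h_{ij}$ comes from. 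More precisely, one expands using the product rule twice, discards $\xi$-evaluated terms of $\tilde h$, and uses that at the center of a normal frame on $B$ we may arrange $\nabla^G_{X_k} X_i = 0$; then the only surviving correction is the one quadratic in the $\Omega(X_k,X_i)\xi$ pieces, contracted over $k$, which gives $\sum_k \Omega(X_k,X_i)\Omega(X_k,X_j) = G(JX_i,JX_j) = \delta_{ij}$, accounting for the coefficient. The term $(\nabla^g_{\nabla^g_k\widetilde{X_k}}\tilde h)_{ij}$ is easy: $\nabla^g_{\widetilde{X_k}}\widetilde{X_k} = \widetilde{\nabla^G_{X_k}X_k} - \Omega(X_k,X_k)\xi = \widetilde{\nabla^G_{X_k}X_k}$ since $\Omega$ is antisymmetric, giving exactly the pull-back term with no correction.

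The third computation, $(\nabla^g_\xi\nabla^g_\xi\tilde h)_{ij}$, uses $(\ref{CovariantDerivativeForVerticalHorizontalVectors})$: $\nabla^g_\xi\widetilde{X_i} = -\phi\widetilde{X_i}$. Since $\tilde h(\xi,\cdot)=0$ and $\mathcal L_\xi\widetilde{X_i}=0$ by $(\ref{LieDerivative})$, the Leibniz expansion of $\nabla^g_\xi(\nabla^g_\xi\tilde h)(\widetilde{X_i},\widetilde{X_j})$ collapses to the term where both $\xi$-derivatives have been moved onto the arguments, producing $2\tilde h(\phi\widetilde{X_i},\phi\widetilde{X_j})$, together with a $\xi(\xi(\cdots))$ term that vanishes because $\tilde h_{ij}$ is constant along the fibers (it is a pull-back and $\xi$ is vertical), and the Leibniz cross-terms must be handled carefully to see they either vanish or combine. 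Then $\phi\widetilde{X_i}$ has unit length and $g(\phi\widetilde{X_i},\phi\widetilde{X_j}) = \delta_{ij} - \eta(\widetilde{X_i})\eta(\widetilde{X_j}) = \delta_{ij}$, and writing $\tilde h(\phi\widetilde{X_i},\phi\widetilde{X_j})$ in the stated form gives the displayed $-2\tilde h_{ij} + 2\tilde h(\phi\widetilde{X_i},\phi\widetilde{X_j})$ --- the $-2\tilde h_{ij}$ here being a separate contribution coming from the sign conventions in the Leibniz expansion (expanding $\xi\xi(\tilde h(\widetilde X_i,\widetilde X_j)) = 0$ and moving terms across). Finally, summing $-(\nabla^g_k\nabla^g_k\tilde h)_{ij} - (\nabla^g_\xi\nabla^g_\xi\tilde h)_{ij} + (\nabla^g_{\nabla^g_k\widetilde{X_k}}\tilde h)_{ij}$ yields $\pi^*((\nabla^G)^*\nabla^G h)_{ij} + 2\tilde h_{ij} + 2\tilde h_{ij} - 2\tilde h(\phi\widetilde{X_i},\phi\widetilde{X_j}) = \pi^*((\nabla^G)^*\nabla^G h)_{ij} + 4\tilde h_{ij} - 2\tilde h(\phi\widetilde{X_i},\phi\widetilde{X_j})$, as claimed.

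The main obstacle is purely bookkeeping: one must keep scrupulous track of all the cross terms in the double Leibniz expansions, correctly discard every term in which $\tilde h$ is evaluated on $\xi$ (using $\tilde h = \pi^* h$), and correctly account for the $\xi$-component $-\Omega(X_k,X_i)\xi$ of $\nabla^g_{\widetilde{X_k}}\widetilde{X_i}$, which is first-order-invisible but second-order-visible. Choosing the frame $\{X_i\}$ geodesic at the base point (so $\nabla^G_{X_k}X_i = 0$ and the derivatives of the $\widetilde{X_i}$ reduce to their vertical parts) is the device that makes this manageable; the identity $\sum_k \Omega(X_k,\cdot)\Omega(X_k,\cdot) = G(J\cdot,J\cdot)$ is the one nontrivial algebraic input. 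No genuine difficulty beyond care is expected.
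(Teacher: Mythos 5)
Your overall strategy is the same as the paper's: the paper proves this lemma precisely by the ``straightforward calculation'' you outline, expanding the iterated covariant derivatives of $\tilde h=\pi^{*}h$ by the Leibniz rule and feeding in the structure equations of Lemma \ref{RelationForCovariantDerivative}. Your decomposition of $(\nabla^{g})^{*}\nabla^{g}$ over the frame $\{\widetilde{X_{1}},\dots,\widetilde{X_{2p}},\xi\}$, the observation that $\nabla^{g}_{\widetilde{X_{k}}}\widetilde{X_{k}}=\widetilde{\nabla^{G}_{X_{k}}X_{k}}$ by antisymmetry of $\Omega$, and the final summation are all correct.

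Two of your coefficient derivations, however, are not the ones that actually occur, and the first would give a wrong term if executed literally. In the first identity the surviving correction is not ``quadratic in the $\Omega(X_{k},X_{i})\xi$ pieces'': the genuinely quadratic term $\tilde h\bigl(\Omega(X_{k},X_{i})\xi,\Omega(X_{k},X_{j})\xi\bigr)$ vanishes outright because $\tilde h(\xi,\cdot)=0$. What survives is the cross term in which the vertical part $-\Omega(X_{k},X_{i})\xi$ of $\nabla^{g}_{\widetilde{X_{k}}}\widetilde{X_{i}}$ meets the outer derivative: one has $(\nabla^{g}_{\widetilde{X_{k}}}\tilde h)(\xi,\widetilde{X_{j}})=\tilde h(\phi\widetilde{X_{k}},\widetilde{X_{j}})$ by $(\ref{CovariantDerivativeForVerticalHorizontalVectors})$, and then $\sum_{k}\Omega(X_{k},X_{i})\,\tilde h(\widetilde{JX_{k}},\widetilde{X_{j}})=\tilde h(\widetilde{J^{2}X_{i}},\widetilde{X_{j}})=-\tilde h_{ij}$, once for each argument, giving $-2\tilde h_{ij}$. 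Your identity $\sum_{k}\Omega(X_{k},X_{i})\Omega(X_{k},X_{j})=\delta_{ij}$ would instead produce a correction proportional to the metric rather than to $h$, which is false for general $h$. Similarly, in the third identity the $-2\tilde h_{ij}$ is not an artifact of ``sign conventions'': it is the contribution of both $\xi$-derivatives landing on the same argument, $\tilde h(\phi^{2}\widetilde{X_{i}},\widetilde{X_{j}})+\tilde h(\widetilde{X_{i}},\phi^{2}\widetilde{X_{j}})=-2\tilde h_{ij}$ via $\phi^{2}=-\mathrm{id}+\eta\otimes\xi$ and $\eta(\widetilde{X_{i}})=0$, while the two terms with one derivative on each argument give $2\tilde h(\phi\widetilde{X_{i}},\phi\widetilde{X_{j}})$; the fact $g(\phi\widetilde{X_{i}},\phi\widetilde{X_{j}})=\delta_{ij}$ that you invoke plays no role. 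With these two repairs your outline closes and reproduces the lemma.
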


Because $\pi: M^{2p+1}\rightarrow B^{2p}$ is a Riemannian submersion, by the fundamental equation in \cite{One66} and also in Theorem 9.26 in \cite{Bes87}, we have the following relationship between curvature tensors on $M^{2p+1}$ and ones on $B^{2p}$.
\begin{lem}
\begin{equation}
\begin{aligned}
R^{g}(\widetilde{X}, \widetilde{Y}, \widetilde{Z}, \widetilde{W})
&=(\pi^{*}R^{G})(X, Y, Z, W)\\
&\ \ \ -2(\pi^{*}\Omega)(\widetilde{X}, \widetilde{Y})(\pi^{*}\Omega)(\widetilde{Z}, \widetilde{W})\\
&\ \ \ -(\pi^{*}\Omega)(\widetilde{X}, \widetilde{Z})(\pi^{*}\Omega)(\widetilde{Y}, \widetilde{W})\\
&\ \ \ +(\pi^{*}\Omega)(\widetilde{X}, \widetilde{W})(\pi^{*}\Omega)(\widetilde{Y}, \widetilde{Z}),
\end{aligned}
\end{equation}
\begin{equation}
R^{g}(\widetilde{X}, \xi, \widetilde{Y}, \xi)=g(\widetilde{X}, \widetilde{Y}),
\end{equation}
and therefore,
\begin{equation}\label{RelationBetweenRicciCurvature}
Ric^{g}(\widetilde{X}, \widetilde{Y})=(\pi^{*}Ric^{G})(\widetilde{X}, \widetilde{Y})-2g(\widetilde{X}, \widetilde{Y}).
\end{equation}
\end{lem}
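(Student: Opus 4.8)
The plan is to treat $\pi\colon (M^{2p+1},g)\to(B^{2p},G,J)$ as a Riemannian submersion and to feed the two O'Neill tensors $A$ and $T$ into the curvature comparison formulas of \cite{One66} (equivalently Theorem 9.28 in \cite{Bes87}). The fibers here are the integral curves of the unit Killing field $\xi$, and by $(\ref{CovariantDerivativeForVerticalVectors})$ they are geodesics; being one-dimensional and geodesic they are totally geodesic, so the tensor $T$ vanishes identically. Hence the whole computation collapses to understanding $A$, and I would organize the proof as: (i) compute $A$ on horizontal lifts; (ii) insert it into the horizontal O'Neill formula to get the first identity; (iii) obtain the mixed identity $R^g(\widetilde X,\xi,\widetilde Y,\xi)=g(\widetilde X,\widetilde Y)$; (iv) trace (i)--(iii) to get the Ricci identity $(\ref{RelationBetweenRicciCurvature})$.

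For (i), equation $(\ref{CovariantDerivativeForHorizontalVectors})$ already exhibits the vertical part of $\nabla^g_{\widetilde X}\widetilde Y$, so $A_{\widetilde X}\widetilde Y=(\nabla^g_{\widetilde X}\widetilde Y)^{V}=-\Omega(X,Y)\,\xi$; the same value follows from $d\eta=2\pi^*\Omega$ together with $\eta([\widetilde X,\widetilde Y])=-d\eta(\widetilde X,\widetilde Y)$. Since $g(\xi,\xi)=1$, this yields the single scalar relation $\langle A_{\widetilde X}\widetilde Y,A_{\widetilde Z}\widetilde W\rangle=(\pi^*\Omega)(\widetilde X,\widetilde Y)(\pi^*\Omega)(\widetilde Z,\widetilde W)$, which is all that enters the curvature terms. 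For (ii), I substitute this into the horizontal O'Neill formula
\[
R^g(\widetilde X,\widetilde Y,\widetilde Z,\widetilde W)=\pi^*R^G(X,Y,Z,W)-2\langle A_{\widetilde X}\widetilde Y,A_{\widetilde Z}\widetilde W\rangle-\langle A_{\widetilde X}\widetilde Z,A_{\widetilde Y}\widetilde W\rangle+\langle A_{\widetilde X}\widetilde W,A_{\widetilde Y}\widetilde Z\rangle,
\]
and the three $A$-terms reproduce exactly the three $\Omega$-products in the statement.

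For (iii), rather than invoke the vertical--horizontal O'Neill identity (which, even with $T=0$, still carries a $\nabla A$ term), I would compute $R^g(\widetilde X,\xi,\widetilde Y,\xi)$ directly from Lemma $\ref{RelationForCovariantDerivative}$: using $\nabla^g_{\widetilde X}\xi=-\phi\widetilde X$, $[\widetilde X,\xi]=0$, and the K\"ahler condition $\nabla^G J=0$ to cancel the tangential terms, one finds $R^g_{\widetilde X\xi}\widetilde Y=g(\widetilde X,\widetilde Y)\xi$, hence $R^g(\widetilde X,\xi,\widetilde Y,\xi)=g(\widetilde X,\widetilde Y)$; this is also immediate from the Sasaki curvature identity $(\ref{CurvatureOnSasakiManifolds})$ restricted to horizontal arguments. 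For (iv), I trace over the adapted orthonormal frame $\{\widetilde X_1,\dots,\widetilde X_{2p},\xi\}$: the horizontal sum $\sum_k R^g(\widetilde X,\widetilde X_k,\widetilde Y,\widetilde X_k)$ produces $(\pi^*Ric^G)(\widetilde X,\widetilde Y)$ minus $3\sum_k\Omega(X,X_k)\Omega(Y,X_k)$, and this last sum equals $g(\widetilde X,\widetilde Y)$ because $J$ is an isometry so $\{JX_k\}$ is again orthonormal; combining the coefficient $-3$ from the horizontal terms with the $+1$ from the $\xi$-direction gives the net $-2g(\widetilde X,\widetilde Y)$.

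The routine but error-prone part --- and the step I expect to be the main obstacle --- is sign bookkeeping: the paper's curvature convention $(\ref{DefOfCurvature})$ is the opposite of the one customary in \cite{One66} and \cite{Bes87}, so I must pin down the precise form of the O'Neill formula (the placement of the factor $2$ and the signs of the $A$ cross-terms) and of the Ricci contraction, so that the coefficients $-2,-1,+1$ in the first identity and the final $-2$ in $(\ref{RelationBetweenRicciCurvature})$ come out correctly. Once the conventions are fixed, each of the three identities reduces to a short substitution.
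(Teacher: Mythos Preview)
Your proposal is correct and follows exactly the route the paper indicates: the paper does not give a detailed argument for this lemma but simply states that the identities follow from the fundamental equations of a Riemannian submersion in \cite{One66} and Theorem~9.26 in \cite{Bes87}, together with the Sasaki curvature identity $(\ref{CurvatureOnSasakiManifolds})$. Your plan---computing $T=0$ and $A_{\widetilde X}\widetilde Y=-\Omega(X,Y)\xi$, then substituting into the O'Neill formulas and tracing---is precisely how one unwinds those citations, and your cautionary remark about the sign convention $(\ref{DefOfCurvature})$ is well placed.
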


From $(\ref{RelationBetweenRicciCurvature})$, we can see that if $g$ is Einstein with Einstein constant $k$ then $G$ is also Einstein with Einstein constant $k+2$. Moreover, the above relations between curvatures directly imply a relation between $\mathring{R}^{g}\tilde{h}$ and $\mathring{R}^{G}h$.
\begin{lem}\label{RelationBetweenCurvatureContraction}
\begin{equation}
(\mathring{R}^{g}\tilde{h})_{ij}=(\pi^{*}(\mathring{R}^{G}h))_{ij}-3\tilde{h}(\phi\widetilde{X_{i}}, \phi\widetilde{X_{j}})-(\pi^{*}\Omega)(\widetilde{X_{i}}, \widetilde{X_{j}})\sum^{2p}_{k=1}\tilde{h}(\widetilde{X_{k}}, \phi\widetilde{X_{k}}),
\end{equation}
for all $1\leq i, j\leq 2p$.
\end{lem}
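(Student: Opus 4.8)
The plan is to compute $(\mathring{R}^g\tilde h)_{ij} = R^g(\widetilde{X_i}, e_a, \widetilde{X_j}, e_b)\tilde h^{ab}$ directly from the O'Neill curvature formulas just stated, where $\{e_a\} = \{\widetilde{X_1},\dots,\widetilde{X_{2p}},\xi\}$ is the chosen orthonormal frame. Since $\tilde h = \pi^* h$ annihilates $\xi$ (that is, $\tilde h(\xi,\cdot)=0$ and $\tilde h^{ab}=0$ whenever $a$ or $b$ is the vertical index), the only surviving contributions come from $a,b \in \{1,\dots,2p\}$, so the vertical-curvature term $R^g(\widetilde X,\xi,\widetilde Y,\xi)$ never enters and we only need the all-horizontal curvature formula. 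I would substitute that formula into $R^g(\widetilde{X_i}, \widetilde{X_k}, \widetilde{X_j}, \widetilde{X_l})\tilde h^{kl}$ and expand the four resulting terms.

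The first term gives $(\pi^* R^G)(X_i,X_k,X_j,X_l) h^{kl}\circ\pi = (\pi^*(\mathring R^G h))_{ij}$, which is the leading term. For the three $\Omega$-quadratic terms I would use $(\pi^*\Omega)(\widetilde{X_a},\widetilde{X_b}) = \Omega(X_a,X_b)\circ\pi = G(X_a, JX_b)\circ\pi$, i.e. in the frame it is the matrix of $J$, together with the identity $\phi\widetilde{X_a} = \widetilde{JX_a}$ from \eqref{AlmostComplexStructureOnTotalSpace} and the fact that $\phi$ is an orthogonal map on the horizontal distribution (Definition \ref{Definition2OfSasakiManifolds}(4) with $\eta(\widetilde X)=0$). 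Writing $J_{ab} := (\pi^*\Omega)(\widetilde{X_a},\widetilde{X_b})$, so that $J_{ab}J_{cb} = \delta_{ac}$ and $\tilde h(\phi\widetilde{X_a},\phi\widetilde{X_b}) = J_{ac}J_{bd}\tilde h^{cd}$, the term $-2(\pi^*\Omega)(\widetilde{X_i},\widetilde{X_k})(\pi^*\Omega)(\widetilde{X_j},\widetilde{X_l})\tilde h^{kl}$ becomes $-2 J_{ik}J_{jl}\tilde h^{kl} = -2\tilde h(\phi\widetilde{X_i},\phi\widetilde{X_j})$; the term with the indices paired as $(\widetilde{X_i},\widetilde{X_l})(\widetilde{X_k},\widetilde{X_j})$ contracts (using symmetry of $\tilde h$ and $J^T = -J$) to $+J_{il}J_{kj}\tilde h^{kl}$... carefully tracking signs, one pair produces another $-\tilde h(\phi\widetilde{X_i},\phi\widetilde{X_j})$ while the remaining cross term, in which $k$ and $l$ are contracted against each other through one $J$, produces $-J_{ij}\sum_k \tilde h(\widetilde{X_k},\phi\widetilde{X_k})$. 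Summing the three $\Omega$-terms then gives $-3\tilde h(\phi\widetilde{X_i},\phi\widetilde{X_j}) - (\pi^*\Omega)(\widetilde{X_i},\widetilde{X_j})\sum_{k=1}^{2p}\tilde h(\widetilde{X_k},\phi\widetilde{X_k})$, which is exactly the claimed expression.

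The one place that genuinely requires care — and which I expect to be the main obstacle — is bookkeeping the signs and index pairings in the three bilinear-in-$\Omega$ terms: it is easy to conflate $\tilde h(\phi\widetilde{X_i},\phi\widetilde{X_j})$ with $\tilde h(\widetilde{X_i},\widetilde{X_j})$ (they differ only because $\phi$ might have a nontrivial kernel, but on the horizontal distribution $\phi^2 = -\mathrm{id}$ so $\tilde h(\phi\widetilde{X_i},\phi\widetilde{X_j})$ need not equal $\tilde h_{ij}$ once one contracts an index), and to misplace the antisymmetry of $J$. I would pin this down by working in a local frame adapted to $J$ (e.g. $J X_{2s-1} = X_{2s}$), so that $J_{ab}$ is block-diagonal with blocks $\left(\begin{smallmatrix}0&1\\-1&0\end{smallmatrix}\right)$, and verify the contraction identities $J_{ak}J_{kb}=-\delta_{ab}$ and $J_{ak}J_{lb}\tilde h^{kl}$ componentwise; everything else is the routine substitution described above together with Lemma \ref{RelationForCovariantDerivative} (which is only needed implicitly, through the O'Neill formulas already quoted). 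No compactness or completeness hypotheses are used; the statement is purely pointwise and algebraic given the curvature relations.
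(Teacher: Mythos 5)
Your proposal is correct and is essentially the paper's own argument: the paper states this lemma as a direct consequence of the preceding O'Neill curvature relations, i.e. exactly the contraction of the all-horizontal curvature formula against $\tilde h^{kl}$ that you carry out, with the vertical terms dropping out because $\tilde{h}=\pi^{*}h$ annihilates $\xi$. (The only quibble is the sign you assign to the $J_{ij}\sum_k\tilde h(\widetilde{X_k},\phi\widetilde{X_k})$ cross term, which is immaterial since that trace vanishes identically, $h$ being symmetric and $\Omega$ antisymmetric, as the paper itself notes later.)
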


\begin{thm}\label{EinsteinOperatorRelation}
\begin{equation}
\langle(\nabla^{g})^{*}\nabla^{g}\tilde{h}-2\mathring{R}^{g}\tilde{h}, \tilde{h}\rangle=(\langle(\nabla^{G})^{*}\nabla^{G}h-2\mathring{R}^{G}h, h\rangle+4\langle h, h\rangle+4\langle h\circ J, h\rangle)\circ\pi.
\end{equation}
Therefore,
\begin{equation}
\begin{aligned}
&\ \ \ \int_{M}\langle(\nabla^{g})^{*}\nabla^{g}\tilde{h}-2\mathring{R}^{g}\tilde{h}, \tilde{h}\rangle dvol_{g}\\
&=\int_{B}(\langle(\nabla^{G})^{*}\nabla^{G}h-2\mathring{R}^{G}h, h\rangle+4\langle h, h\rangle+4\langle h\circ J, h\rangle)dvol_{G}.
\end{aligned}
\end{equation}
\end{thm}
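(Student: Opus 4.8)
The plan is to assemble the identity from the three preceding lemmas and then integrate, being careful only about how the pointwise inner products on $M$ and $B$ relate. First I would use Lemma \ref{RelationBetweenLaplacian} and Lemma \ref{RelationBetweenCurvatureContraction} to write
\begin{equation*}
\begin{aligned}
((\nabla^{g})^{*}\nabla^{g}\tilde{h}-2\mathring{R}^{g}\tilde{h})_{ij}
&=(\pi^{*}((\nabla^{G})^{*}\nabla^{G}h-2\mathring{R}^{G}h))_{ij}+4\tilde{h}_{ij}-2\tilde{h}(\phi\widetilde{X_{i}},\phi\widetilde{X_{j}})\\
&\ \ \ +6\tilde{h}(\phi\widetilde{X_{i}},\phi\widetilde{X_{j}})+2(\pi^{*}\Omega)(\widetilde{X_{i}},\widetilde{X_{j}})\sum_{k}\tilde{h}(\widetilde{X_{k}},\phi\widetilde{X_{k}}),
\end{aligned}
\end{equation*}
so the ``extra'' terms beyond the base Einstein operator are $4\tilde{h}_{ij}+4\tilde{h}(\phi\widetilde{X_{i}},\phi\widetilde{X_{j}})$ plus the $\Omega$-term. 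I would then contract against $\tilde{h}^{ij}=\tilde{h}(\widetilde{X_{i}},\widetilde{X_{j}})$ summing over $1\le i,j\le 2p$. The key observation is that since $\tilde{h}=\pi^{*}h$ is horizontal (it annihilates $\xi$), the full $M$-inner product $\langle\cdot,\cdot\rangle$ of horizontal symmetric $2$-tensors agrees with the $B$-inner product pulled back along $\pi$, because $\{\widetilde{X_{1}},\dots,\widetilde{X_{2p}},\xi\}$ is orthonormal and the $\xi$-components vanish.

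The next step is to identify each contracted term with its claimed value. The term $4\tilde{h}_{ij}\tilde{h}^{ij}$ is exactly $(4\langle h,h\rangle)\circ\pi$. For the term $4\tilde{h}(\phi\widetilde{X_{i}},\phi\widetilde{X_{j}})\tilde{h}^{ij}$, I would use $\phi\widetilde{X_{i}}=\widetilde{JX_{i}}$ from \eqref{AlmostComplexStructureOnTotalSpace} together with $\tilde{h}=\pi^{*}h$ to rewrite it as $4\,h(JX_{i},JX_{j})\,h(X_{i},X_{j})=4\langle h\circ J,h\rangle\circ\pi$, which is precisely the definition of $h\circ J$ given in the statement. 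The remaining piece is the $\Omega$-term $2(\pi^{*}\Omega)(\widetilde{X_{i}},\widetilde{X_{j}})\big(\sum_{k}\tilde{h}(\widetilde{X_{k}},\phi\widetilde{X_{k}})\big)\tilde{h}^{ij}$; I would show it vanishes. Indeed $\sum_{k}\tilde{h}(\widetilde{X_{k}},\phi\widetilde{X_{k}})=\sum_{k}h(X_{k},JX_{k})$ is the trace of $h$ against the (skew) Kähler form, and contracting $\Omega(\widetilde{X_{i}},\widetilde{X_{j}})$ against the symmetric $\tilde{h}^{ij}$ gives zero because $\Omega$ is antisymmetric while $\tilde h$ is symmetric. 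So the $\Omega$-term drops out regardless of the value of the scalar factor. This yields the pointwise identity, and the integral version then follows immediately since $\pi$ is a Riemannian submersion with compact fibers: $\int_{M}(u\circ\pi)\,dvol_{g}=(\mathrm{vol}\,S^{1})\int_{B}u\,dvol_{G}$, and after normalizing the fiber length (or absorbing the constant, which the paper's normalization $d\eta=2\pi^{*}\Omega$ arranges) one gets the stated equality of integrals.

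The main obstacle I anticipate is bookkeeping rather than conceptual: one must be scrupulous about the sign and coefficient conventions in Lemmas \ref{RelationBetweenLaplacian} and \ref{RelationBetweenCurvatureContraction} so that the $\tilde{h}(\phi\widetilde{X_{i}},\phi\widetilde{X_{j}})$ contributions add up to $+4$ rather than cancelling or doubling, and one must confirm that the $\mathring{R}^{G}$ appearing after contraction is genuinely the base curvature term $\mathring{R}^{G}h$ contracted against $h$ (i.e. that no horizontal–vertical mixed curvature components sneak in — they do not, because $\tilde h$ kills $\xi$ and the mixed curvature $R^{g}(\widetilde{X},\xi,\widetilde{Y},\xi)=g(\widetilde X,\widetilde Y)$ only enters through components already accounted for). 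A secondary subtlety is making the ``horizontal inner products on $M$ equal pulled-back inner products on $B$'' claim precise for the full $S^{2}$-contraction (it is automatic, but worth one sentence), and noting that both $\tilde h$ traceless/transverse on $M$ corresponds to $h$ traceless/transverse on $B$ so the identity is applied in the right class of tensors. Once these points are checked, the theorem is just the sum of the three lemmas plus integration over the fiber.
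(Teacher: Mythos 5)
Your proposal is correct and follows essentially the same route as the paper: add Lemmas \ref{RelationBetweenLaplacian} and \ref{RelationBetweenCurvatureContraction}, contract against $\tilde{h}$ (using that $\tilde{h}$ is horizontal so only the $1\le i,j\le 2p$ components contribute), and show the cross term vanishes. The only cosmetic difference is that you kill the $\Omega$-term via the symmetric--antisymmetric pairing, whereas the paper proves the equivalent fact $tr_{G}(h(J\cdot,\cdot))=0$ using a $J$-adapted orthonormal frame; both arguments are valid, and your remark about the fiber-volume constant in the integral identity is if anything slightly more careful than the paper's.
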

\begin{proof}
By Lemma $\ref{RelationBetweenLaplacian}$ and Lemma $\ref{RelationBetweenCurvatureContraction}$, we directly have
\begin{equation}
\begin{aligned}
\langle(\nabla^{g})^{*}\nabla^{g}\tilde{h}-2\mathring{R}^{g}\tilde{h}, \tilde{h}\rangle=
&(\langle(\nabla^{G})^{*}\nabla^{G}h-2\mathring{R}^{G}h, h\rangle+4\langle h, h\rangle\\
&+4\langle h(J\cdot, J\cdot), h\rangle+2(tr_{G}(h(J\cdot, \cdot)))^{2})\circ\pi.
\end{aligned}
\end{equation}
Then it suffices to show that $tr_{G}(h(J\cdot, \cdot)=0$. Because $(B^{2p}, G, J)$ is K\"{a}hler, and in particular complex, we can choose a local orthonormal frame of $TB$ in the form of $\{X_{1}, \cdots, X_{p}, JX_{1}, \cdots, JX_{p}\}$. Then
$$tr_{G}(h(J\cdot, \cdot))=\sum^{p}_{i=1}h(JX_{i}, X_{i})+\sum^{p}_{j=1}h(J^{2}X_{j}, JX_{j})=0,$$
by using $J^{2}=-id$ and the symmetry of $h$.
\end{proof}

We choose a local orthonormal frame $\{X_{1}, \cdots, X_{p}, JX_{1}, \cdots, JX_{p}\}$ of $TB$ as in the proof of Proposition $\ref{EinsteinOperatorRelation}$, and set
$$h(X_{i}, X_{j})=h_{ij}, \ \ h(X_{i}, JX_{j})=h_{i\bar{j}}, \ \ h(JX_{i}, X_{j})=h_{\bar{i}j}, \ \ h(JX_{i}, JX_{j})=h_{\bar{i}\bar{j}},$$
for all $1\leq i,j\leq p$.

Then we have
\begin{equation}
\langle h, h\rangle=\sum^{p}_{i,j=1}(h_{ij}h_{ij}+h_{i\bar{j}}h_{i\bar{j}}+h_{\bar{i}j}h_{\bar{i}j}+h_{\bar{i}\bar{j}}h_{\bar{i}\bar{j}}),
\end{equation}
\begin{equation}\label{InequalityFor2-tensor}
\langle h\circ J, h\rangle=\sum^{p}_{i,j=1}2(h_{ij}h_{\bar{i}\bar{j}}-h_{\bar{i}j}h_{i\bar{j}})\leq\langle h, h\rangle.
\end{equation}

For any $h\in C^{\infty}(S^{2}(B))$, by doing directly calculations, we have that $tr_{g}\tilde{h}=tr_{G}h$, $(\delta_{g}\tilde{h})(\widetilde{X})=(\delta_{G}h)(X)$, and $(\delta_{g}\tilde{h})(\xi)=-tr_{G}(h(J\cdot, \cdot))=0$. Consequently, if $h$ is traceless and transverse, then so is $\tilde{h}$.
\begin{cor}
If there exists a traceless transverse symmetric 2-tensor $h\in C^{\infty}(B, S^{2}(B))$ such that $\int_{B}\langle(\nabla^{G})^{*}\nabla^{G}h-2\mathring{R}^{G}h, h\rangle dvol_{G}\leq-8\int_{B}\langle h, h\rangle dvol_{G}$, then $(M^{2p+1}, g)$ is unstable.
\end{cor}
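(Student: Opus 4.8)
The plan is to treat the corollary as a direct consequence of the integrated identity in Theorem \ref{EinsteinOperatorRelation}, feeding the hypothesized base tensor into it as a test direction on the total space and then controlling the single indefinite term by the pointwise bound (\ref{InequalityFor2-tensor}). There is no analytic difficulty to overcome; the whole content is the substitution plus elementary sign bookkeeping, and the only point demanding genuine care is the strictness of the resulting inequality.

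First I would fix a nonzero traceless transverse $h\in C^{\infty}(B, S^{2}(B))$ realizing the hypothesis and set $\tilde{h}=\pi^{*}h$. As recorded immediately before the statement, $tr_{g}\tilde{h}=tr_{G}h=0$, $(\delta_{g}\tilde{h})(\widetilde{X})=(\delta_{G}h)(X)$, and $(\delta_{g}\tilde{h})(\xi)=0$, so $\tilde{h}$ is again traceless and transverse and is therefore an admissible test tensor for the Einstein operator on $(M^{2p+1},g)$. Since $\pi$ is a Riemannian submersion, $h\neq0$ forces $\tilde{h}\neq0$, and also $\int_{B}\langle h,h\rangle\, dvol_{G}>0$, a fact I will want at the end.

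Next I would apply the integrated form of Theorem \ref{EinsteinOperatorRelation} to $\tilde{h}$, namely $\int_{M}\langle(\nabla^{g})^{*}\nabla^{g}\tilde{h}-2\mathring{R}^{g}\tilde{h}, \tilde{h}\rangle\, dvol_{g}=\int_{B}(\langle(\nabla^{G})^{*}\nabla^{G}h-2\mathring{R}^{G}h, h\rangle+4\langle h,h\rangle+4\langle h\circ J, h\rangle)\, dvol_{G}$. The only summand on the right whose sign is not already determined is the last one, and this is precisely what (\ref{InequalityFor2-tensor}) governs: $\langle h\circ J, h\rangle\leq\langle h,h\rangle$ pointwise, hence $4\int_{B}\langle h\circ J, h\rangle\, dvol_{G}\leq 4\int_{B}\langle h,h\rangle\, dvol_{G}$. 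Substituting this bound gives $\int_{M}\langle(\nabla^{g})^{*}\nabla^{g}\tilde{h}-2\mathring{R}^{g}\tilde{h}, \tilde{h}\rangle\, dvol_{g}\leq\int_{B}\langle(\nabla^{G})^{*}\nabla^{G}h-2\mathring{R}^{G}h, h\rangle\, dvol_{G}+8\int_{B}\langle h,h\rangle\, dvol_{G}$, and the hypothesis bounds the first summand by $-8\int_{B}\langle h,h\rangle\, dvol_{G}$, so the whole right-hand side is at most $0$.

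Finally I would read off instability: $\tilde{h}$ is a nonzero traceless transverse symmetric $2$-tensor along which the Einstein quadratic form is non-positive, which already breaks strict stability of $(M^{2p+1},g)$. The subtlety I anticipate as the main point to get right is the passage from a non-positive to a strictly negative value, which is what the definition of \emph{unstable} actually requires. Because $h\neq0$ gives $\int_{B}\langle h,h\rangle\, dvol_{G}>0$, the hypothesis $\int_{B}\langle(\nabla^{G})^{*}\nabla^{G}h-2\mathring{R}^{G}h, h\rangle\, dvol_{G}\leq-8\int_{B}\langle h,h\rangle\, dvol_{G}$ is a strictly negative bound on the base; upgrading it to the strict form quoted in the Introduction makes the chain above end in a strict inequality, so $\int_{M}\langle(\nabla^{g})^{*}\nabla^{g}\tilde{h}-2\mathring{R}^{g}\tilde{h}, \tilde{h}\rangle\, dvol_{g}<0$ and $\tilde{h}$ is an honest destabilizing direction. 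In the borderline case of equality throughout one would only obtain a null direction, so I would either assume the strict hypothesis or note that equality forces $h$ to be $J$-Hermitian with the base form attaining its bound, a degenerate situation I would flag rather than grind through.
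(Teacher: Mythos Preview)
Your approach is exactly the paper's: the author's proof is the single sentence ``Proposition \ref{EinsteinOperatorRelation} and the inequality (\ref{InequalityFor2-tensor}) directly imply the conclusion,'' and you have simply unpacked that sentence, together with the traceless-transverse check recorded just before the corollary. Your flagging of the strict versus non-strict issue is apt---the Introduction states the hypothesis with $<$ while Section~6 has $\leq$, and as you observe only the strict version cleanly yields instability in the sense defined.
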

\begin{proof}
Proposition $\ref{EinsteinOperatorRelation}$ and the inequality $(\ref{InequalityFor2-tensor})$ directly imply the conclusion.
\end{proof}
\begin{cor}\label{UnstabilityForProductBase}
If the base space $(B^{2p}, G)$ of a regular Sasaki-Einstein manifold $(M^{2p+1}, g)$ is the Riemannian product of K\"{a}hler-Einstein manifolds $(B^{2p_{1}}_{1}, G_{1})$ and $(B^{2p_{2}}_{2}, G_{2})$, where $p_{1}+p_{2}=p$, then $(M^{2p+1}, g)$ is unstable.
\end{cor}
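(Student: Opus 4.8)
The plan is to exhibit an explicit traceless transverse symmetric 2-tensor $h$ on the product base $(B^{2p},G) = (B^{2p_1}_1,G_1)\times(B^{2p_2}_2,G_2)$ whose lift $\tilde h = \pi^*h$ violates the stability inequality on $M$, using the formula in Theorem \ref{EinsteinOperatorRelation}. First I would observe that since $(M^{2p+1},g)$ is Sasaki-Einstein, $(B^{2p},G)$ is Kähler-Einstein with Einstein constant $2p+2$, so each factor $(B^{2p_i}_i,G_i)$ must be Kähler-Einstein with the \emph{same} Einstein constant $2p+2$ (the Einstein constant of a Riemannian product splits factor-wise). The canonical candidate is the well-known unstable direction on a product of Einstein manifolds with equal Einstein constant:
\[
h = \frac{1}{2p_1}G_1 - \frac{1}{2p_2}G_2,
\]
extended to $B$ by declaring the mixed components (one leg in $B_1$, one in $B_2$) to vanish. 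A quick check gives $\mathrm{tr}_G h = \frac{2p_1}{2p_1} - \frac{2p_2}{2p_2} = 0$, and $\delta_G h = 0$ since each $G_i$ is parallel for $\nabla^{G_i}$ and hence for $\nabla^G$; so $h$ is traceless and transverse, and by the computation preceding the corollary its lift $\tilde h$ is traceless and transverse on $M$ as well.

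Next I would compute the left side of the inequality in Theorem \ref{EinsteinOperatorRelation} for this $h$. On each factor, $\nabla^{G}G_i = 0$, so $(\nabla^G)^*\nabla^G h = 0$. For the curvature term, $\mathring R^{G_i} G_i = \mathrm{Ric}^{G_i}$, and since $\mathrm{Ric}^{G_i} = (2p+2)G_i$, one gets $\mathring R^G h = (2p+2)\big(\tfrac{1}{2p_1}G_1 - \tfrac{1}{2p_2}G_2\big) = (2p+2)h$; the cross-curvature terms vanish because the product metric has block-diagonal curvature. Hence $\langle(\nabla^G)^*\nabla^G h - 2\mathring R^G h, h\rangle = -2(2p+2)\langle h,h\rangle$. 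For the $J$-term, since $h$ is built from the two Kähler metrics $G_i$ and $J$ preserves each factor, $G_i\circ J = G_i$, so $h\circ J = h$ and $\langle h\circ J, h\rangle = \langle h,h\rangle$. Plugging into Theorem \ref{EinsteinOperatorRelation},
\[
\langle(\nabla^g)^*\nabla^g\tilde h - 2\mathring R^g\tilde h, \tilde h\rangle = \big(-2(2p+2) + 4 + 4\big)\langle h,h\rangle\circ\pi = (4 - 4p)\langle h,h\rangle\circ\pi = -4(p-1)\langle h,h\rangle\circ\pi.
\]
Since $p_1,p_2\ge 1$ forces $p = p_1+p_2 \ge 2$, this is strictly negative, and integrating over $M$ (with $\langle h,h\rangle\circ\pi$ a positive function, $h$ not identically zero) shows $(M^{2p+1},g)$ is unstable.

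I expect the only real point requiring care is verifying that $h$ defines a genuine destabilizing direction rather than, say, a direction tangent to the gauge orbit that the traceless-transverse condition is meant to excise; but since $h$ is explicitly traceless and divergence-free this is automatic, and the passage from the pointwise identity to the integrated inequality is immediate. A secondary bookkeeping point is confirming that the Einstein constant of the \emph{total} space being $2p$ forces the base Einstein constant to be exactly $2p+2$, which is already recorded in the text right after \eqref{RelationBetweenRicciCurvature}; combined with the fact that a Riemannian product is Einstein only when the factors share a common Einstein constant, the hypothesis that $B$ splits as a product pins down the Einstein constant of each $B_i$, which is what makes the $\mathring R^G h = (2p+2)h$ computation go through. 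No further estimates are needed.
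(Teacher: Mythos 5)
Your proposal is correct and follows essentially the same route as the paper: the same test tensor $h=\frac{G_{1}}{2p_{1}}-\frac{G_{2}}{2p_{2}}$, the same verification that it is traceless and transverse, and the same application of Theorem \ref{EinsteinOperatorRelation} with $\langle h\circ J,h\rangle=\langle h,h\rangle$, arriving at the identical value $-4(p-1)\langle h,h\rangle<0$ for $p_{1},p_{2}\geq1$. The only cosmetic difference is that you express the curvature term as $\mathring{R}^{G}h=(2p+2)h$ while the paper writes it via the scalar curvatures of the factors; the computations agree.
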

\begin{proof}
Set $h=\frac{G_{1}}{2p_{1}}-\frac{G_{2}}{2p_{2}}$. $h$ is a traceless transverse symmetric 2-tensor and is an unstable direction of $(B^{2p}, G)=(B^{2p_{1}}_{1}, G_{1})\times (B^{2p_{2}}_{2}, G_{2})$. Let us recall
\begin{equation}
Ric_{g}=(2p_{1}+2p_{2})g,
\end{equation}
\begin{equation}
Ric_{G}=(2p_{1}+2p_{2}+2)G.
\end{equation}
Then we have
\begin{equation}
\langle(\nabla^{G})^{*}\nabla^{G}h-2\mathring{R}^{G}h, h\rangle=-2\frac{R_{G_{1}}}{4p^{2}_{1}}-2\frac{R_{G_{2}}}{4p^{2}_{2}}=-2(p_{1}+p_{2}+1)(\frac{1}{p_{1}}+\frac{1}{p_{2}}).
\end{equation}
Moreover,
\begin{equation}
\langle h, h\rangle=\langle h\circ J, h\rangle=\frac{1}{2p_{1}}+\frac{1}{2p_{2}}.
\end{equation}
Thus, by Theorem $\ref{EinsteinOperatorRelation}$, we have
\begin{equation}
\langle(\nabla^{g})^{*}\nabla^{g}\tilde{h}-2\mathring{R}^{g}\tilde{h}, \tilde{h}\rangle=-2(p_{1}+p_{2}-1)(\frac{1}{p_{1}}+\frac{1}{p_{2}})<0,
\end{equation}
if both $p_{1}\geq1$ and $p_{2}\geq1$.
\end{proof}

\section{Acknowlegments}

The author would like to thank his advisor Professor Xianzhe Dai, and also Professor Guofang Wei for introducing the problem to him, and for their careful guidance, valuable discussions and constant encouragement. He thanks Klaus Kr\"{o}ncke for helpful discussions. He also thanks Chenxu He for his interests.



\end{document}